\newcommand{\Rset}{{\mathbb{R}}}
\newcommand{\pr}{{\mathbb P}}
\newcommand{\esp}{{\mathbb E}}
\newcommand\sphere[1]{\mathcal{S}^{#1}}
\newcommand\convweak {\ \stackrel{{\rm (d)}}\longrightarrow \ }
\newcommand\mbg{\mathbb{G}}
\newcommand\mbu{\mathbb{U}}
\newcommand\mbw{\mathbb{W}}
\newcommand\mbx{\mathbb{X}}
\newcommand\mby{\mathbb{Y}}
\newcommand\mct{\mathcal{T}}
\newcommand\rmd{\mathrm{d}}
\newcommand\rme{\mathrm{e}}
\newcommand\sgn{\mathrm{sgn}}
\newcommand\1[1]{\mathbbm{1}_{#1}}
\newcommand\diam[1]{M_n^{(#1)}}
\newcommand\diamq{M_{n,q}^{(2)}}
\newcommand\maxq{M_{n,q}}
\newcommand\beps{\boldsymbol{\varepsilon}}
\newcommand\psiT{\psi_T}
\newcommand\psiA{\psi_A}
\newcommand\phiA{\phi_A}
\newcommand\unifk{\mathbb{W}^{(k)}}
\newcommand{\goodgap}{\hspace{\subfigcapskip}}
\definecolor{Rcolor}{RGB}{150,160,190}
\newcommand{\Rx}{\fontsize{10pt}{12pt}\selectfont
\raisebox{.3em}{\hspace{1.2em}%
\llap{\resizebox{1.09em}{.5em}{\color{black}$\bigcirc$}}%
\llap{\resizebox{1.199em}{.55em}{\color{darkgray}$\bigcirc$}}%
\llap{\resizebox{1.19em}{.52em}{\color{gray!50}$\bigcirc$}}%
\llap{\resizebox{1.1em}{.5em}{\color{gray}$\bigcirc$}}%
\llap{\resizebox{1.25em}{.55em}{\color{gray}$\bigcirc$}}%
}%
\hspace{-.85em}%
\textbf{%
%\resizebox{.55em}{1.5ex}{\textcolor{black!80}{\textsf{R}}}%
\textcolor{black}{\textsf{R}}%
\hspace{-.025em}\raisebox{.01em}{\llap{\textcolor{Rcolor}{\textsf{R}}}}%
}}%
\newbox\rbox
\savebox\rbox{\scalebox{0.1}{\Rx}}
\def\Rlogo{\scalebox{\f@size}{\usebox\rbox}\xspace}
\newtheorem{theorem}{Theorem}[section]
\newtheorem{assumption}{Assumption}[section]
\newtheorem{lemma}[theorem]{Lemma}
\newtheorem{proposition}[theorem]{Proposition}
\newtheorem{corollary}[theorem]{Corollary}
\theoremstyle{remark}
\crefname{lemma}{lemma}{lemmas}
\Crefname{lemma}{Lemma}{Lemmas}
\crefname{theorem}{theorem}{theorems}
\Crefname{theorem}{Theorem}{Theorems}
\begin{document}

\title{The diameter of a random elliptical cloud}
\author{Yann Demichel* \and Ana Karina Ferm\'in* \and Philippe
  Soulier\thanks{Laboratoire Modal'X, Universit\'e Paris Ouest Nanterre, 200 avenue de la R\'epublique, 92000 Nanterre, France} }

\date{}
\maketitle

\begin{abstract}
  
  We study the asymptotic behavior of the diameter or maximum interpoint distance of a cloud of
  i.i.d.~$d$-dimensional random vectors when the number of points in the cloud tends to
  infinity. This is a non standard extreme value problem since the diameter is a max $U$-statistic,
  hence the maximum of  dependent random variables. Therefore, the limiting distributions may not be
  extreme value distributions. We obtain exhaustive results for the Euclidean diameter of a cloud of
  elliptical vectors whose Euclidean norm is in the domain of attraction for the maximum of the
  Gumbel distribution. We also obtain results in other norms for spherical vectors and we give
  several bi-dimensional generalizations. The main idea behind our results and their proofs is a
  specific property of random vectors whose norm is in the domain of attraction of the Gumbel
  distribution: the localization into subspaces of low dimension of vectors with a large norm. 
\end{abstract}

Keywords: Elliptical Distributions; Interpoint Distance; Extreme Value Theory; Gumbel Distribution.

AMS Classification (2010): 60D05 60F05

\section{Introduction}

Let $\{\mbx,\mbx_i,i\geq1\}$ be i.i.d.~random vectors in $\Rset^d$, for a fixed $d\geq1$. The
quantities of interest in this paper are the maximum Euclidean norm $M_n(\mbx)$ and the Euclidean
diameter $\diam2(\mbx)$ of the sample, that is
\begin{align}
  M_n(\mbx) & = \max_{1 \leq i \leq n} \|\mbx_i\| \; , \\
  \diam2(\mbx) & = \max_{1 \leq i < j \leq n} \|\mbx_i - \mbx_j\| \; ,
\end{align}
where $\|\cdot\|$ denotes the Euclidean norm in $\Rset^d$.  The behavior of $M_n(\mbx)$ as $n$ tends
to infinity is a classical univariate extreme value problem. Its solution is well known. If the
distribution of $\|\mbx\|$ is in the domain of attraction of some extreme value distribution, then
$M_n(\mbx)$, suitably renormalized, converges weakly to this distribution.  We are interested in
this paper only in the case where the limiting distribution is the Gumbel law. More precisely, the
working assumption of this paper will be that there exist two sequences $\{a_n\}$ and $\{b_n\}$ such
that $\lim_{n\to\infty} a_n=\infty$, $\lim_{n\to\infty} b_n/a_n=0$ and
\begin{align}
   \label{eq:domaine}
  \lim_{n\to\infty}  n\pr(\|\mbx\|>a_n + b_n z) = \rme^{-z}
\end{align}
for all $z\in\Rset$, or  equivalently,
\begin{align*}
  \lim_{n\to\infty} \pr \left( \frac{M_n(\mbx) - a_n}{b_n} \leq z \right) =
  \rme^{-\rme^{-z}} \; .
\end{align*}
The asymptotic behavior of the diameter of the sample cloud is also an extreme
value problem since $\diam2(\mbx)$ is a maximum, but it is a non standard one, because
of the dependency between the pairs $(\mbx_i,\mbx_j)$.

This problem has been recently investigated by \cite{jammalamadaka:janson:2012}
for spherically distributed vectors, that is, vectors having the representation
$\mbx = T \mbw$ where $\mbw$ is uniformly distributed on the Euclidean unit
sphere $\sphere{d-1}$ of $\Rset^d$ and $T$ is a positive random variable in the
domain of attraction of the Gumbel distribution, independent of $\mbw$.  This
reference also contains a review of the literature concerning other domains of
attractions.

If $d=1$, a spherical random variable is simply a symmetric random variable, that is a positive
random variable multiplied by an independent random sign. The diameter of a real valued sample is
simply its maximum minus its minimum, and by independence and symmetry, it is straightforward to
check that $(\diam2(\mbx)-2a_n)/b_n$ converges weakly to the sum of two independent Gumbel random
variables with location parameter~$\log2$, i.e.~distributed as $\Gamma-\log2$, where $\Gamma$ is a
standard Gumbel random variable. Note that the tail of such a sum is heavier than the tail of one
Gumbel random variable.

If $d\geq2$, \cite{jammalamadaka:janson:2012} have shown that in spite of the dependency, the
limiting distribution is the Gumbel law, but a correction is needed. Precisely, they proved that
if~(\ref{eq:domaine}) holds, with an additional mild uniformity condition, there exists a sequence
$\{d_n\}$ such that $d_n\to\infty$, $d_n = O(\log(a_n/b_n))$ and
\begin{align}
  \lim_{n\to\infty} \pr \left( \frac{\diam2(\mbx) - 2a_n}{b_n} + d_n \leq z \right) =
  \rme^{-\rme^{-z}} \; . \label{eq:jj-spherique}
\end{align}
The exact expression of the sequence $\{d_n\}$ will be given in the comments after
Theorem~\ref{theo:diam-k>1}.  This implies that $\diam2(\mbx)/(2M_n(\mbx))$ converges in probability
to 1, but the behaviors of $M_n(\mbx)$ and $\diam2(\mbx)$ are subtly different. Specifically, $a_n$
is typically a power of $\log n$, so $\log(a_n/b_n)$ is of order $\log\log n$.

It is possible to give some rationale for the presence of the diverging correcting factor~$d_n$
in~(\ref{eq:jj-spherique}). In dimension one, two vectors with a large norm may be either on the
same side of the origin or on opposite sides. In the latter case their distance is automatically
large, typically twice as large as the norm of each one. In higher dimensions, two spherical vectors
with a large norm can be close to each other and their distance will be typically much smaller than
twice the norm of the largest one. Therefore we expect the probability that the diameter is large to
be smaller in the latter case.

This suggests that the asymptotic behavior of the diameter is related to the localization of vectors
with large norm. The behavior will differ if large values are to be found in some specific regions
of the space or can be found anywhere.

There are many possible directions to extend the results of
\cite{jammalamadaka:janson:2012}.  One very simple case not covered by these
results is the multivariate Gaussian distribution with correlated components.
The Gaussian distribution is a particular case of elliptical distributions.  The
main purpose of this paper is to investigate the behavior of the diameter of a
sample cloud of elliptical vectors.

Elliptical vectors are widely used in extreme value theory since they are in the
domain of attraction of multivariate extreme value distributions. These
distributions and their generalizations have been recently considered in the
apparently unrelated problem of obtaining limiting conditional distributions
given one component is extreme, see \cite{MR2739357} and the references therein.

In this paper, the tail behavior of a product $TU$, where $T$ is in the domain of
attraction of the Gumbel distribution and $U$ is a bounded positive random
variable independent of~$T$, was obtained as a by-product of the main
result. Under some regularity assumption on the density of $U$ at its maximum,
the tail of $TU$ is slightly lighter than the tail of $T$. The main 
reason is that if a random variable $T$ is in the domain of attraction of the Gumbel
distribution, then  for any $\alpha>1$,
\begin{align*}
  \lim_{x\to\infty} \frac{\pr(T>\alpha x)}{\pr(T>x)}  = 0  \; .
\end{align*}
This implies that for $TU$ to be large, $U$ must be very close to its maximum.  The full strength of
this remark was recently exploited in \cite{barbe:seifert:2013} who obtained the rate of convergence
of $U$ towards its maximum when the product $TU$ is large and the conditional limiting distribution
of the difference between $U$ and its maximum, suitably renormalized. This property explains deeply
the conditional limits obtained in \cite{MR2739357}.  Having in mind the earlier remarks on the link
between the localization of the vectors with large norm and the asymptotic distribution of the
diameter, it is clear that this localization property will be helpful to study the problem at hand
in this paper.

The rest of the paper is organized as follows. In Section~\ref{sec:elliptic}, we will define
elliptical vectors and state our main results. In section~\ref{theo:norm-ddim}, extending the
results of~\cite{barbe:seifert:2013}, we will show that the realizations of a $d$-dimensional random
elliptical vector with large norm are localized on a subspace of~$\Rset^d$ whose dimension is the
multiplicity of the largest eigenvalue of the covariance matrix. This result will be crucial to
prove our main results which are stated in Section~\ref{sec:diameter}. As partially conjectured by
\cite[Section~5.4]{jammalamadaka:janson:2012}, if the largest eigenvalue of the covariance matrix is
simple, then the limiting distribution of the diameter is similar but not equal to the one which
arises when~$d=1$: correcting terms appear that are due the fluctuations around the direction of the
largest eigenvalue. If the largest eigenvalue is not simple, say its multiplicity is $k$, then the
diameter behaves as in the spherical case in dimension $k$, up to constants.

In Section~\ref{sec:lq-spherical}, we will answer another question of
\cite{jammalamadaka:janson:2012}, namely we will investigate the $l^q$ diameter of a cloud of
spherical vectors, for $1 \leq q \leq \infty$. This problem is actually simpler than the
corresponding one in Euclidean ($l^2$) norm, since the vectors with large norm are always localized
close to a finite number of directions. Therefore, the ``localization principle'' applies and we
obtain the same type of limiting distribution as in the case of an elliptical distribution with
simple largest eigenvalue. For $q=1$ and $q>2$ the problem simplifies even further since the
corrective terms vanish and the limiting distribution of the one dimensional case is obtained.

In Section~\ref{sec:bidim}, we discuss further possible generalizations and give several
bidimensional examples.

We think that beyond answering certain questions on the diameter of a random cloud, the main purpose
of this paper is to emphasize the use of the localization principle of vectors with large norm in
the domain of attraction of the Gumbel distribution. This principle should be useful in other problems.

\section{The Euclidean norm of an elliptical vector}
\label{sec:elliptic}
A random vector $\mbx$ in $\Rset^d$ has an elliptical distribution if it can be expressed as
\begin{align}
  \label{eq:def-elliptic}
  \mbx=TA\mbw
\end{align}
where $T$ is a positive random variable, $A$ is an invertible $d\times d$ matrix and $\mbw=(W_1,
\dots,W_d)$ is uniformly distributed on the sphere $\sphere{d-1}$. The covariance matrix of $\mbx$
is then given by $\esp[T^2] A'A$ where $M'$ denotes the transpose of any matrix $M$. Let $\lambda_1
\geq \dots \geq \lambda_d > 0$ be its ordered eigenvalues repeated according to their
multiplicity. The distribution is spherical if all the eigenvalues are equal. Otherwise, there
exists $k\in\{1,\dots,d-1\}$ such that
\begin{align}
  \label{eq:eignvalues}
  \lambda_1 = \cdots = \lambda_k > \lambda_{k+1} \; .
\end{align}
We will see that this number $k$ plays a crucial role for tail of the norm and the asymptotic
distribution of the diameter.

Let $\mbw_{i}=(W_{i,1}, \dots,W_{i,d})$, $i=1,2$, be independent random vectors uniformly
distributed on $\sphere{d-1}$, and define $\mbx_i=T_i A\mbw_i$, which are i.i.d.~with the same
distribution as $\mbx$.  Since for any orthogonal matrix $P$ (i.e. $P'=P^{-1}$), $P\mbw$ is also
uniformly distributed on $\sphere{d-1}$, it holds that
\begin{align*}
  \|\mbx\|^2 & \stackrel{(d)} = T^2 \sum_{q=1}^d \lambda_q W_q^2 \; ,  \\
  \|\mbx_1-\mbx_2\|^2 & \stackrel{(d)}= T^2 \sum_{q=1}^d \lambda_q (W_{1,q}-W_{2,q})^2 \; ,
\end{align*}
where $\stackrel{(d)}=$ denotes equality in law. Define
\begin{align}
  \label{eq:def-y}
  \mby=T(\sqrt{\lambda_1}W_1,\dots,\sqrt{\lambda_d}W_d)
\end{align}
and let $\{\mby_i,i\geq1\}$ is a sequence of i.i.d.~vectors with the same distribution as $\mby$.
Then
\begin{align*}
  M_n(\mbx) \stackrel{(d)}= M_n(\mby) \; , \ \ \diam2(\mbx) \stackrel{(d)}=\diam2(\mby) \;.
\end{align*}
Therefore, we will prove our results using the vectors $\{\mby_i,i\geq1\}$.

In all the sequel, we will assume that $T$ is in the max domain of attraction of the Gumbel law,
i.e.~the limit~(\ref{eq:domaine}) holds, or equivalently, there exists a function $\psiT$, called an
auxiliary function for $T$, defined on $(0,\infty)$ such that
\begin{align*}
  \lim_{x\to\infty} \frac{\psiT(x)}x = 0 \; ,
\end{align*}
and
\begin{align}
   \label{eq:gumbel-DA-psi}
   \lim_{x\to\infty} \frac{\pr(T>x+\psiT(x)z)}{\pr(T>x)} = \rme^{-z} \; ,
\end{align}
locally uniformly with respect to $z\in\Rset$.  Moreover, the survival function
of $T$ can be expressed as
\begin{align}
  \label{eq:vonmises}
  \pr(T>x) = \vartheta(x) \exp \left(-\int_{x_0}^x \frac{\rmd s}{\psiT(s)} \right) \; , 
\end{align}
where $\lim_{x\to\infty} \vartheta(x) \in (0,\infty)$. See e.g.~\cite[Chapter~0]{resnick:1987}.

Define the functions $\psiA$ and $\phiA$ on $(0,\infty)$ by
\begin{align*}
  \psiA(x) & = \sqrt{\lambda_1} \psiT(x/\sqrt{\lambda_1}) \; , \ \ \ \phiA(x) =
  \sqrt{\frac{\psiT(x/\sqrt{\lambda_1})} {x/\sqrt{\lambda_1}}} = \sqrt{\psiA(x)/x} \; .
\end{align*}

In the sequel, the notation $\sim $ means that the ratio of the two terms around $\sim$ tend to one
when their parameter ($x$ or $n$) tends to infinity and $\convweak$ denotes weak convergence of
probability distribution.

\begin{theorem}
  \label{theo:norm-ddim}
  Let $\mbx$ be as in~(\ref{eq:def-elliptic}) with $T$ satisfying~(\ref{eq:gumbel-DA-psi}), $\mbw$
  uniformly distributed on~$\sphere{d-1}$, and assume that the eigenvalues
  $\lambda_1,\dots,\lambda_d$ of the correlation matrix satisfy~(\ref{eq:eignvalues}). Then,
\begin{align*}
  \pr(\|\mbx\|>x) & \sim \frac{\Gamma(\frac d2)}{ \Gamma(\frac{k}2)} 2^{(d-k)/2} \;
  \left(\prod_{q=k+1}^d \frac{\lambda_1}{\lambda_1-\lambda_q}\right)^{1/2} \; \phi_A^{d-k}(x) \;
  \pr(T>x/\sqrt{\lambda_1}) \; .
\end{align*}
Let $\mby$ be as in~(\ref{eq:def-y}). Then, as $x\to\infty$, conditionally on $\|\mby\|>x$,
\begin{align*}
  \left(
    \tfrac{\|\mby\|-x}{\psi_A(x)},W_1,\dots,W_k,\tfrac{W_{k+1}}{\phi_A(x)},\dots,\tfrac{W_d}{\phi_A(x)}
  \right) \convweak (E,\unifk,\sqrt{\tfrac{\lambda_1}{\lambda_1-\lambda_{k+1}}}G_{k+1}, \dots,
  \sqrt{\tfrac{\lambda_1}{\lambda_1-\lambda_{d}}}G_d) \; ,
\end{align*}
where $E$ is an exponential random variable with mean~1, $\unifk$ is
uniformly distributed on $\sphere{k-1}$, $G_{k+1},\dots,G_d$ are
independent standard Gaussian random variables, and all components are
independent.
\end{theorem}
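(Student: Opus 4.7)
My plan is to prove both assertions simultaneously by evaluating, for any $r\in\Rset$ and Borel sets $A\subset\sphere{k-1}$ and $B\subset\Rset^{d-k}$, the joint probability
\[
P_x(r,A,B):=\pr\!\Big(\|\mby\|>x+r\psiA(x),\ \tilde\mbw_k\in A,\ \big(\tfrac{W_{k+1}}{\phiA(x)},\dots,\tfrac{W_d}{\phiA(x)}\big)\in B\Big),
\]
where $\tilde\mbw_k:=(W_1,\dots,W_k)/\|(W_1,\dots,W_k)\|$. The driving algebraic identity, obtained from $\sum_q W_q^2=1$, is
\[
\|\mby\|^2=T^2\lambda_1\Big(1-\sum_{q=k+1}^d c_q W_q^2\Big),\qquad c_q:=1-\lambda_q/\lambda_1\in(0,1),
\]
so $\|\mby\|$ depends on $(W_1,\dots,W_k)$ only through their squared sum. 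Since $\tilde\mbw_k$ is uniform on $\sphere{k-1}$ and independent of $(W_{k+1},\dots,W_d)$, the set $A$ factors out with weight $\sigma(A)/\sigma(\sphere{k-1})$.

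Using the classical Lebesgue density
\[
f_{d,k}(\mathbf{w})=\frac{\Gamma(d/2)}{\pi^{(d-k)/2}\Gamma(k/2)}\,\big(1-\|\mathbf{w}\|^2\big)_+^{(k-2)/2}
\]
of $(W_{k+1},\dots,W_d)$ on the unit ball of $\Rset^{d-k}$, and rescaling $w_q=\phiA(x)u_q$, I would reduce $P_x(r,A,B)$ to
\[
\frac{\sigma(A)}{\sigma(\sphere{k-1})}\,\phiA(x)^{d-k}\int_B \pr\big(T>y_x(\mathbf{u})\big)\,f_{d,k}\big(\phiA(x)\mathbf{u}\big)\,d\mathbf{u},
\]
with $y_x(\mathbf{u})=(x+r\psiA(x))/\bigl(\sqrt{\lambda_1}\sqrt{1-\phiA(x)^2\sum c_q u_q^2}\bigr)$. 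A first-order expansion, using $\psiA(x)=\sqrt{\lambda_1}\psiT(x/\sqrt{\lambda_1})$ and the key identity $x\phiA(x)^2=\psiA(x)$, gives
\[
y_x(\mathbf{u})=\frac{x}{\sqrt{\lambda_1}}+\psiT\!\big(x/\sqrt{\lambda_1}\big)\Big(r+\tfrac12\sum_{q>k}c_q u_q^2\Big)+o\big(\psiT(x/\sqrt{\lambda_1})\big),
\]
locally uniformly in $\mathbf{u}$.

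Applying the locally uniform Gumbel property~\eqref{eq:gumbel-DA-psi} to $T$ then yields the pointwise ratio $\pr(T>y_x(\mathbf{u}))/\pr(T>x/\sqrt{\lambda_1})\to\exp(-r-\tfrac12\sum c_q u_q^2)$, while $f_{d,k}(\phiA(x)\mathbf{u})\to\Gamma(d/2)/(\pi^{(d-k)/2}\Gamma(k/2))$. Passing these limits under the integral by dominated convergence produces
\[
P_x(r,A,B)\sim \phiA(x)^{d-k}\pr(T>x/\sqrt{\lambda_1})\,\frac{\sigma(A)}{\sigma(\sphere{k-1})}\,\frac{\Gamma(d/2)\,e^{-r}}{\pi^{(d-k)/2}\Gamma(k/2)}\int_B e^{-\frac12\sum c_q u_q^2}d\mathbf{u}.
\]
Specializing $(r,A,B)=(0,\sphere{k-1},\Rset^{d-k})$ and evaluating the Gaussian integral as $(2\pi)^{(d-k)/2}\prod_{q>k}c_q^{-1/2}$ produces the claimed tail equivalence for $\pr(\|\mby\|>x)$. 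Dividing $P_x(r,A,B)$ by this marginal identifies the conditional joint limit as the product of an $\mathrm{Exp}(1)$ variable, a uniform on $\sphere{k-1}$, and independent $N\bigl(0,\lambda_1/(\lambda_1-\lambda_q)\bigr)$ coordinates for $q>k$, exactly as stated. Finally, since $\sum_{q>k}W_q^2\to 0$ in conditional probability, $(W_1,\dots,W_k)=\sqrt{1-\sum_{q>k}W_q^2}\,\tilde\mbw_k$ has the same uniform limit $\unifk$ as $\tilde\mbw_k$.

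The main obstacle is justifying the dominated convergence step: the integrand must be bounded, uniformly in $x$ and in $\mathbf{u}$ over the expanding domain $\phiA(x)\|\mathbf{u}\|<1$, by a function of $\mathbf{u}$ alone that is integrable against $e^{-\frac12\sum c_q u_q^2}$. The key estimate is of the form $\pr(T>y_x(\mathbf{u}))/\pr(T>x/\sqrt{\lambda_1})\le C\exp(-\tfrac14\sum c_q u_q^2)$, which promotes the pointwise Gumbel behaviour to a global bound. This will be obtained from the von Mises representation~\eqref{eq:vonmises} and the self-neglecting property of $\psiT$, which together yield sharp uniform control of $\pr(T>y)/\pr(T>x)$ for all $y\ge x$ as $x\to\infty$.
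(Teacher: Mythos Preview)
Your proof follows essentially the same route as the paper's: both exploit the algebraic identity $\|\mby\|^2=\lambda_1 T^2(1-\sum_{q>k}c_qW_q^2)$, the independence of $\tilde\mbw_k$ from $(W_{k+1},\dots,W_d)$ together with the explicit density of the latter, the rescaling $w_q=\phiA(x)u_q$, and the locally uniform Gumbel property to obtain the pointwise limit, followed by a domination argument to pass to the full integral. The paper packages the change of variables into a separate lemma and works with continuous test functions rather than Borel sets, but the mechanics are identical.

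The one point to flag is the domination step. The paper does not attempt your exponential bound $\pr(T>y_x(\mathbf u))/\pr(T>x/\sqrt{\lambda_1})\le C\exp\bigl(-\tfrac14\sum c_qu_q^2\bigr)$; under the bare assumption~\eqref{eq:gumbel-DA-psi} the self-neglecting property of $\psiT$ is only \emph{locally} uniform, so such a global exponential estimate need not hold. Instead the paper uses the standard polynomial bound
\[
\frac{\pr(T>x+\psiT(x)u)}{\pr(T>x)}\le C(1+u)^{-p}\qquad(u\ge 0,\ p>0\text{ arbitrary}),
\]
and combines it with the elementary inequality $(1-g(\phiA(x)\mathbf u))^{-1}\ge 1+\tfrac12\sum_{q>k}c_qu_q^2$ to get the majorant $C\bigl(1+\tfrac12\sum c_qu_q^2\bigr)^{-p}$, which is integrable on $\Rset^{d-k}$ for $p>(d-k)/2$. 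Your argument goes through verbatim once you substitute this weaker but sufficient bound.
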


\paragraph{Comments}
\begin{itemize}
\item This result implies that $\|\mbx\|$ is in the domain of attraction of the Gumbel distribution
  and that an auxiliary function for $\|\mbx\|$ is $\psiA$.

\item The first statement can be obtained as a consequence of
  \cite[Proposition~3.2.1]{MR2739357}. In dimension 2, the second result is a consequence of
  \cite[Theorem~2.1]{barbe:seifert:2013}, where a real valued random variable $X$ which can be
  expressed $X = Tu(S)$ is considered, with $T$ satisfying~(\ref{eq:vonmises}), $S$ taking values in
  $[0,1]$ and the bounded function~$u$ having some regularity properties around its maximum and the
  asymptotic behavior of $S$ conditionally on the product $Tu(S)$ being large is obtained.
\end{itemize}

We now consider the polar representation of the vector $\mby$, that is we define $\Theta =
\frac{\mby}{\|\mby\|}$ and for $q=k+1,\dots,d$, we define also $ \tau_q^2 =
{\frac{\lambda_q}{\lambda_1-\lambda_q}}$.

\begin{corollary}
  \label{coro:norm-ddim} Under the conditions of Theorem~\ref{theo:norm-ddim}, as $x\to\infty$,
  conditionally \mbox{on $\|\mby\|>x$},
\begin{align*}
  \left( \tfrac{\|\mby\|-x}{\psiA(x)},\Theta_1, \dots, \Theta_k,\tfrac{\Theta_{k+1}}
    {\phiA(x)},\dots,\tfrac{\Theta_d}{\phi_A(x)}\right) \convweak
  (E,\unifk,\tau_{k+1}G_{k+1},\dots,\tau_dG_d) \; ,
\end{align*}
where $E$ is an exponential random variable with mean~1, $\unifk$ is uniformly distributed on
$\sphere{k-1}$, $G_{k+1},\dots,G_d$ are independent standard Gaussian random variables, and all
components are independent.
\end{corollary}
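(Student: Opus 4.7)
The plan is to deduce Corollary~\ref{coro:norm-ddim} from Theorem~\ref{theo:norm-ddim} by a straightforward continuous mapping / Slutsky argument, after expressing the components $\Theta_q$ of the direction $\Theta = \mby/\|\mby\|$ in terms of the coordinates $(W_1,\dots,W_d)$ of the uniform vector on $\sphere{d-1}$. Since $\mby = T(\sqrt{\lambda_1}W_1,\dots,\sqrt{\lambda_d}W_d)$, the variable $T$ cancels in the ratio and one gets the deterministic identity
\begin{align*}
  \Theta_q \;=\; \frac{\sqrt{\lambda_q}\, W_q}{\sqrt{\sum_{j=1}^d \lambda_j W_j^2}}\;,\qquad q=1,\dots,d\; .
\end{align*}
Thus the whole vector in the statement of the corollary is a continuous function of $(\|\mby\|,W_1,\dots,W_d)$, and the first coordinate $(\|\mby\|-x)/\psiA(x)$ is already handled by Theorem~\ref{theo:norm-ddim}, so the work is only on the directional part.

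The key step is to control the denominator $R_x := \sqrt{\sum_{j=1}^d \lambda_j W_j^2}$ conditionally on $\|\mby\|>x$. By Theorem~\ref{theo:norm-ddim}, the first $k$ coordinates $(W_1,\dots,W_k)$ converge in distribution to $\unifk$, which lies on $\sphere{k-1}$, so $\sum_{j=1}^k W_j^2 \to 1$ in probability. The remaining coordinates satisfy $W_q/\phiA(x) \convweak \sqrt{\lambda_1/(\lambda_1-\lambda_q)}\, G_q$ for $q>k$, hence $\sum_{j=k+1}^d W_j^2 = O_{\pr}(\phiA(x)^2) = o_{\pr}(1)$ since $\phiA(x)\to 0$. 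Combining these gives $R_x^2 \to \lambda_1$ in probability, and therefore $R_x \to \sqrt{\lambda_1}$ in probability conditionally on $\|\mby\|>x$.

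From here the conclusion is immediate. For $q \leq k$ we have $\lambda_q=\lambda_1$, so
\begin{align*}
  \Theta_q = \frac{\sqrt{\lambda_1}}{R_x}\, W_q \;\convweak\; \unifk_q,
\end{align*}
while for $q>k$,
\begin{align*}
  \frac{\Theta_q}{\phiA(x)} = \frac{\sqrt{\lambda_q}}{R_x}\cdot\frac{W_q}{\phiA(x)} \;\convweak\;
  \frac{\sqrt{\lambda_q}}{\sqrt{\lambda_1}}\cdot\sqrt{\frac{\lambda_1}{\lambda_1-\lambda_q}}\, G_q
  \;=\; \tau_q G_q\;.
\end{align*}
Slutsky's lemma applied jointly with the first coordinate $(\|\mby\|-x)/\psiA(x) \convweak E$ from Theorem~\ref{theo:norm-ddim} preserves the independence of all components and yields the stated limit.

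There is no real obstacle here: the only delicate point is verifying that the convergence in Theorem~\ref{theo:norm-ddim} is strong enough to justify dividing by $R_x$, which reduces to checking that $R_x$ is bounded away from zero with conditional probability tending to one. This is obvious since the $\unifk$-limit in the $(W_1,\dots,W_k)$-block lies on a compact set of norm $1$, so the continuous mapping theorem applies without further difficulty.
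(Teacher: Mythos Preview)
Your argument is correct and is exactly the natural continuous mapping / Slutsky derivation from Theorem~\ref{theo:norm-ddim}; the paper does not spell out a separate proof of this corollary, treating it as an immediate consequence, and what you wrote is precisely that immediate consequence made explicit. One minor phrasing point: Slutsky's lemma does not ``preserve independence'' as such --- rather, the joint weak limit in Theorem~\ref{theo:norm-ddim} already has independent components, and since $R_x\to\sqrt{\lambda_1}$ in probability you are multiplying by factors that are constant in the limit, so the transformed limit vector inherits the same joint (hence independent) structure.
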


This result can be rephrased in terms of weak convergence of point processes (see
e.g.~\cite[Proposition~3.21]{resnick:1987}).  Let $a_n$ be the $1-1/n$ quantile of the distribution
of $\|\mbx\|$ or $\|\mby\|$, i.e.  $\pr(\|\mbx\|>a_n) = \pr(\|\mby\|>a_n) \sim 1/n$ and set $b_n =
\psiA(a_n)$ and $c_n = \phiA(a_n)$.  Define the points
\begin{align}
  \label{eq:def-Pni}
  P_{n,i} = \left( \frac{\|\mby_i\|-a_n}{b_n}, \Theta_1, \dots,\Theta_k, \frac{\Theta_{k+1}}{c_n},
    \dots, \frac{\Theta_d}{c_n} \right) \; .
\end{align}

\begin{corollary}
  \label{coro:ppconv}
  Under the conditions of Theorem~\ref{theo:norm-ddim}, the point processes $\sum_{i=1}^n
  \delta_{P_{n,i}}$ converge weakly to a Poisson point process $N = \sum_{i=1}^\infty\delta_{P_i}$
  on $\Rset\times\sphere{k-1}\times\Rset^{d-k}$ with
  \begin{align}
    \label{eq:def-Pi}
    P_i = (\Gamma_i,\unifk_{i},\tau_{k+1} G_{i,k+1},\dots,\tau_dG_{i,d}) \; ,
  \end{align}
  where $\{\Gamma_{i},i\geq1\}$ are the points of a Poisson point process on $(-\infty,\infty]$ with
  mean measure $\rme^{-x} \rmd x$, $\{\unifk_{i},i\geq1\}$ are i.i.d.~vectors uniformly distributed
  on $\sphere{k-1}$ and $\{G_{i,q}, i\geq1, q=k+1,\dots,d\}$ are i.i.d.~standard Gaussian variables,
  all sequences being mutually independent.

\end{corollary}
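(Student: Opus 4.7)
The plan is to apply the standard criterion for convergence of empirical point processes built from i.i.d.~samples to a Poisson point process (see e.g.~\cite[Proposition~3.21]{resnick:1987}): it then suffices to show that the intensities $\nu_n := n\,\pr(P_{n,1} \in \cdot\,)$ converge vaguely on $\Rset \times \sphere{k-1} \times \Rset^{d-k}$ to the mean measure of the candidate limit process, namely the product measure
\begin{equation*}
  \mu(\rmd z, \rmd\theta, \rmd g) = \rme^{-z}\,\rmd z \otimes \sigma_{k-1}(\rmd\theta) \otimes \bigotimes_{q=k+1}^d \mathcal{N}(0,\tau_q^2)(\rmd g_q),
\end{equation*}
where $\sigma_{k-1}$ denotes the uniform probability on $\sphere{k-1}$. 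This $\mu$ is Radon on the target space, since $\rme^{-z}\,\rmd z$ puts finite mass on every half-line $[z,\infty)$.

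To check the vague convergence, I would evaluate $\nu_n$ on a continuity rectangle $[z,\infty) \times B \times K$ with $z \in \Rset$, $B \subset \sphere{k-1}$ a continuity set and $K \subset \Rset^{d-k}$ a bounded continuity set, and factor
\begin{equation*}
  \nu_n([z,\infty) \times B \times K) = n\,\pr(\|\mby\|>a_n+b_n z)\cdot \pr(E_n \mid \|\mby\|>a_n+b_n z),
\end{equation*}
where $E_n := \{(\Theta_1,\dots,\Theta_k) \in B,\ c_n^{-1}(\Theta_{k+1},\dots,\Theta_d) \in K\}$. By Theorem~\ref{theo:norm-ddim}, $\|\mby\|$ is in the Gumbel domain of attraction with auxiliary function $\psiA$; combining~(\ref{eq:gumbel-DA-psi}) with the definitions $\pr(\|\mby\|>a_n) \sim 1/n$ and $b_n = \psiA(a_n)$ gives $n\,\pr(\|\mby\|>a_n+b_n z) \to \rme^{-z}$. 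The conditional factor is exactly what Corollary~\ref{coro:norm-ddim} delivers: applied with $x = a_n + b_n z \to \infty$, it yields
\begin{equation*}
  \pr(E_n \mid \|\mby\|>a_n+b_n z) \longrightarrow \sigma_{k-1}(B) \cdot \pr\bigl((\tau_{k+1}G_{k+1},\dots,\tau_d G_d) \in K\bigr).
\end{equation*}
Multiplying the two limits produces $\mu([z,\infty) \times B \times K)$, as required.

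The main obstacle I anticipate is legitimizing the use of Corollary~\ref{coro:norm-ddim} at the shifted level $a_n + b_n z$, whereas the scaling constant $c_n = \phiA(a_n)$ in the definition of $P_{n,i}$ is calibrated at $a_n$. This is where the self-neglecting nature of $\psiT$ encoded in~(\ref{eq:gumbel-DA-psi}) is essential: it entails $\psiA(a_n+b_n z)/\psiA(a_n) \to 1$ and hence $\phiA(a_n+b_n z)/\phiA(a_n) \to 1$ for each fixed $z$, so replacing $\phiA(a_n+b_n z)$ by $c_n$ in the normalization is asymptotically harmless and the limit of Corollary~\ref{coro:norm-ddim} is preserved. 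Vague convergence on rectangles, together with the product structure of $\mu$, then extends by a standard monotone-class argument to full vague convergence of $\nu_n$ to $\mu$, completing the reduction that Proposition~3.21 of~\cite{resnick:1987} requires.
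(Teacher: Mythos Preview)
Your approach is correct and matches the paper's. The paper does not give a detailed proof of this corollary at all: it simply states, just before the corollary, that the conditional limit of Corollary~\ref{coro:norm-ddim} ``can be rephrased in terms of weak convergence of point processes (see e.g.~\cite[Proposition~3.21]{resnick:1987})'' and leaves it at that. You have spelled out precisely this reduction---checking $n\,\pr(P_{n,1}\in\cdot)\to\mu$ vaguely by factoring into the tail probability and the conditional law, then invoking Corollary~\ref{coro:norm-ddim}---and you correctly flag and resolve the one genuine detail the paper omits, namely that the normalization $c_n=\phiA(a_n)$ rather than $\phiA(a_n+b_nz)$ is harmless by the self-neglecting property of the auxiliary function.
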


\paragraph{Comments} Since the measure $\rme^{-x}\,\rmd x$ is finite on any interval
  $[a,\infty]$, $a\in\Rset$, the point process $N$ has a finite number of points on any set
  $[a,\infty] \times \sphere{d-1} \times \Rset^{d-k}$. Therefore, the points can and will be
  numbered in such a way that $\Gamma_1>\Gamma_2>\dots$. Moreover, if the points $P_{n,i}$ are also
  numbered in decreasing order of their first component, then for each fixed integer $m$,
  $(P_{n,1},\dots,P_{n,m})$ converges weakly to $(P_1,\dots,P_m)$.

We illustrate Theorem~\ref{theo:norm-ddim} for three dimensional Gaussian vectors whose maximum
eigenvalue $\lambda_1$ of the correlation matrix is simple (Figure~\ref{fig:oursin}a) or double
(Figure~\ref{fig:oursin}b). The rate of convergence to zero of the coordinates corresponding to the
smallest eigenvalues is $O(\log n)$.

\clearpage 

\begin{figure}[!t]
\centering 
\subfigure[$\lambda_1=4$, $\lambda_2=1$, $\lambda_3=0.5$]
{\includegraphics[scale=0.30,clip=true,trim=80 80 50 80]{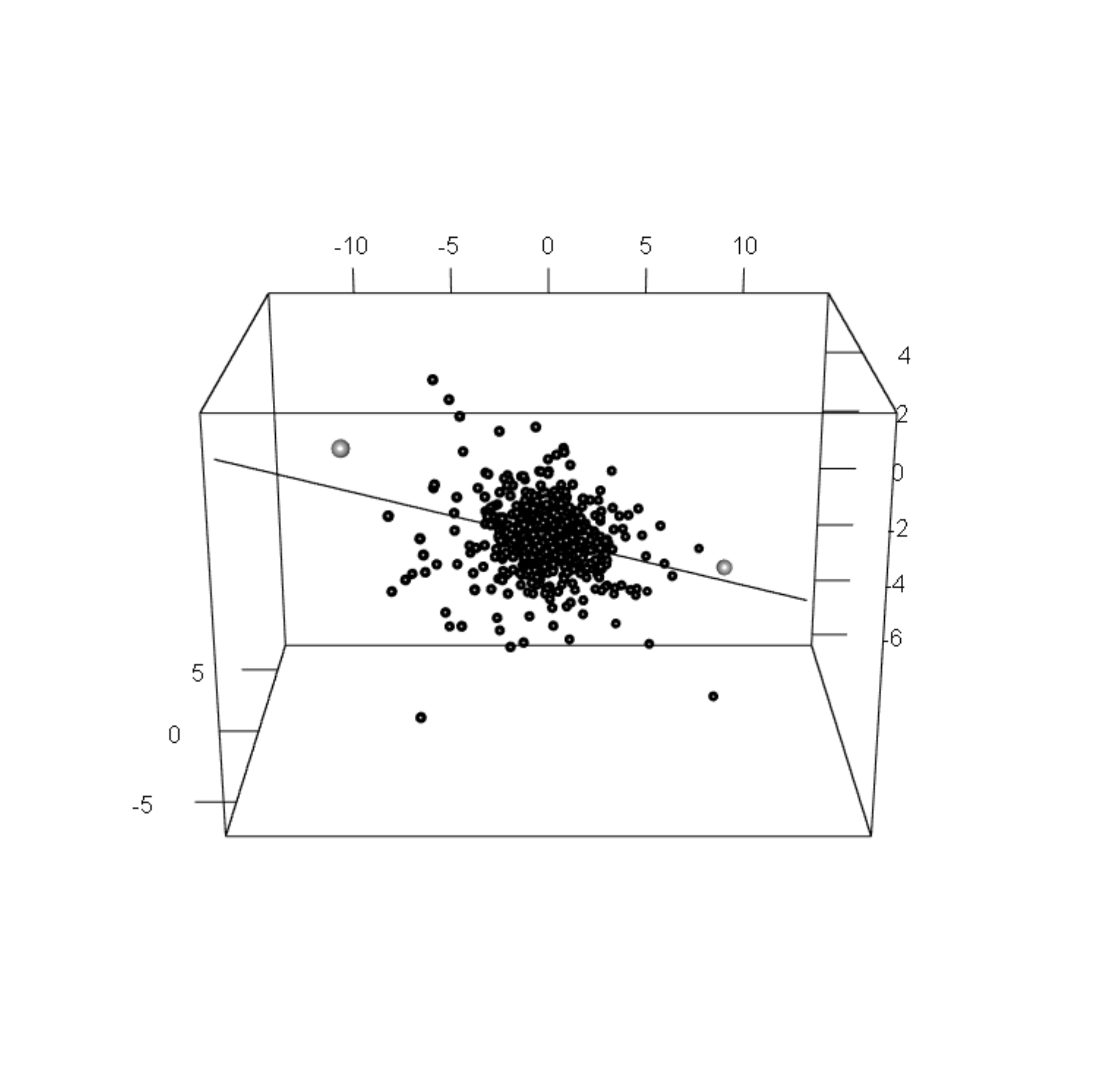}} 
\goodgap
\subfigure[$\lambda_1=\lambda_2=4$, $\lambda_3=0.5$]
{\includegraphics[scale=0.30,trim=80 80 80 80]{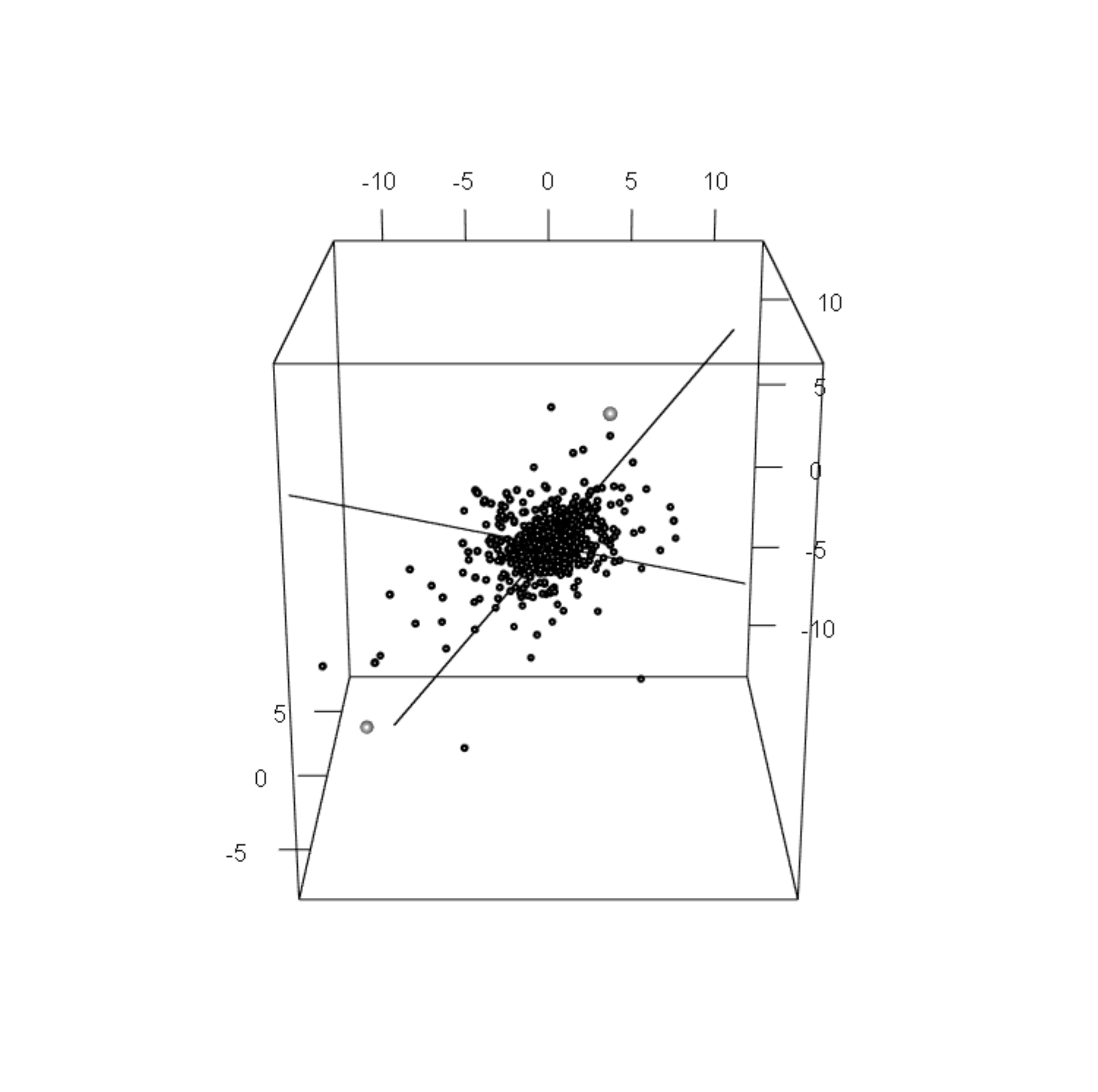}}
\vspace{-0.2cm}
\caption{\footnotesize{Two sample clouds of size $1000$ of the trivariate Gaussian distribution. The
    eigenvalues of the correlation matrix are given under each figure. The gray spheres are the
    points which realize the diameter.  The black lines are the principal axes.}} \label{fig:oursin}
\end{figure}

\subsubsection*{Proof of Theorem~\ref{theo:norm-ddim}}

We will need the following Lemma.

\begin{lemma}
  \label{lem:conditioning-uniform-sphere}
  Let $\mbw$ be uniformly distributed on $\sphere{d-1}$. For $k\in\{1,\dots,d-1\}$, define the
  random vector $\unifk$ on $\sphere{k}$ by
  \begin{align*}
    \unifk = \frac1{\sqrt{1-W_{k+1}^2-\cdots-W_d^2}} (W_{1},\dots,W_k) \; .
  \end{align*}
  Then $\unifk$ is uniformly distributed on $\sphere{k}$ and independent of $(W_{k+1},\dots,W_d)$.
  If $f$ is continuous and compactly supported on $\Rset^d$, then
  \begin{multline}
    \label{eq:spherical-dilate}
    \lim_{s\to\infty} s^{d-k} \, \esp [ f(W_1,\dots,W_k,sW_{k+1},\dots,sW_d) ] \\
    = \frac{\Gamma(\frac d2)}{\pi^{\frac{d-k}2} \Gamma(\frac{k}2)} \int_{\Rset^k}
    \esp[f(\unifk,t_{k+1},\dots,t_d)] \, \rmd t_{k+1} \dots \rmd t_d \; .
  \end{multline}
\end{lemma}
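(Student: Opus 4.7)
The plan is to handle the two assertions in turn, with the independence claim reduced to a Gaussian representation and the asymptotic identity reduced to a change of variables plus dominated convergence.

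For the first part I would represent $\mbw \stackrel{(d)}= G/\|G\|$ with $G=(G_1,\dots,G_d)$ i.i.d.\ standard normal. Writing $R_k^2=G_1^2+\cdots+G_k^2$, one has $\unifk \stackrel{(d)}= (G_1,\dots,G_k)/R_k$, which is a classical uniform random vector on $\sphere{k-1}$, independent of $R_k$ and of $(G_{k+1},\dots,G_d)$ by the rotational invariance of the standard Gaussian law. Since $(W_{k+1},\dots,W_d)$ is a measurable function of $(R_k,G_{k+1},\dots,G_d)$ alone, it is independent of $\unifk$.

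For the limit identity, the preliminary step is the marginal density of $v:=(W_{k+1},\dots,W_d)$ on the open ball $\{\|y\|<1\}\subset\Rset^{d-k}$. From the Gaussian picture, $\|v\|^2$ is $\mathrm{Beta}((d-k)/2,k/2)$ and the angular part $v/\|v\|$ is independently uniform on $\sphere{d-k-1}$; converting to a Cartesian density yields
\begin{align*}
  p(y) = \frac{\Gamma(d/2)}{\pi^{(d-k)/2}\,\Gamma(k/2)}\,(1-\|y\|^2)^{k/2-1}.
\end{align*}
Conditioning on $v=y$ and applying the first part, $(W_1,\dots,W_k) \stackrel{(d)}= \sqrt{1-\|y\|^2}\,\unifk$ with $\unifk$ independent of $y$, so
\begin{align*}
  \esp\bigl[f(W_1,\dots,W_k,sW_{k+1},\dots,sW_d)\bigr]
  = \int p(y)\,\esp\bigl[f\bigl(\sqrt{1-\|y\|^2}\,\unifk,\,sy\bigr)\bigr]\,\rmd y.
\end{align*}

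Substituting $t=sy$ (Jacobian $s^{-(d-k)}$) transforms this into
\begin{align*}
  s^{d-k}\,\esp[\,\cdots\,] = \int p(t/s)\,\esp\bigl[f\bigl(\sqrt{1-\|t/s\|^2}\,\unifk,\,t\bigr)\bigr]\,\rmd t,
\end{align*}
and I would close by dominated convergence: pointwise in $t$, $p(t/s)\to p(0)=\Gamma(d/2)/(\pi^{(d-k)/2}\Gamma(k/2))$ and $\sqrt{1-\|t/s\|^2}\to 1$. The compact support of $f$ confines the integrand to a fixed bounded set in $t$, on which $p(t/s)$ is uniformly bounded once $s$ is large enough; the mild boundary singularity of $p$ at $\|y\|=1$ (visible when $k=1$) plays no role. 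The only genuine obstacle is pinning down the constant in the density $p$; everything after that is a routine scaling and dominated-convergence argument.
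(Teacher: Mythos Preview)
Your proof is correct and follows essentially the same approach as the paper: the Gaussian representation for the independence claim, then conditioning on $(W_{k+1},\dots,W_d)$, change of variables $t=sy$, and dominated convergence for the limit. The only minor difference is that you obtain the full marginal density $p(y)=\frac{\Gamma(d/2)}{\pi^{(d-k)/2}\Gamma(k/2)}(1-\|y\|^2)^{k/2-1}$ via the Beta law of $\|v\|^2$, whereas the paper computes only the value $g(0)$ through a Jacobian determinant calculation---your route to the constant is arguably cleaner, but the underlying argument is the same.
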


The convergence~(\ref{eq:spherical-dilate}) can be extended to sequences of continuous functions
$f_x$ with compact support which depend on $x$ provided they converge locally uniformly to a
continuous function with compact support. By bounded convergence, it can also be extended to
sequences of bounded continuous functions $f_x$ if there exists a function $f^*$ (not depending on
$x$) and such that $|f_x|\leq f^*$ for all $x$ and $\int_{\Rset^k}
\esp[f^*(\unifk,t_{k+1},\dots,t_d)] \, \rmd t_{k+1} \dots \rmd t_d < \infty$.  The proof of the
Lemma consists merely in a change of variable and is postponed to Section~\ref{sec:proof-lemmas}.

\begin{proof}[Proof of Theorem~\ref{theo:norm-ddim}]
  Note first that if $(w_1,\dots,w_d) \in \sphere{d-1}$, then
  \begin{align*}
    \sum_{q=1}^d \lambda_q w_q^2 = \lambda_1 \left(1 - \sum_{q=k+1}^d \gamma_q^{-2} w_q^2 \right) \;    ,
  \end{align*}
  with $\gamma_q^2 = \lambda_1/(\lambda_1-\lambda_q)$, $q=k+1,\dots,d$. Thus we can write
  \begin{align*}
    \|\mby\| = \sqrt{\lambda_1}T - \sqrt{\lambda_1}T g(W_{k+1},\dots,W_d) \; ,
  \end{align*}
  where
  \begin{align*}
    g(w_{k+1},\dots,w_d) & = 1-\sqrt{1 - \sum_{q=k+1}^d \gamma_q^{-2} w_q^2} \; ,
  \end{align*}
  and
  \begin{align*}
    \lim_{s\to\infty} s^2 g(s^{-1}w_{k+1},\dots,s^{-1} w_d) & = \frac12 \sum_{q=k+1}^d \gamma_q^{-2}
    w_q^2 \; ,
  \end{align*}
  locally uniformly with respect to $w_{k+1},\dots,w_d$.

  For $x>0$, define the function $k_x$ on $\Rset^{d-k+1}$ by
  \begin{align*}
    k_x(z,w_{k+1},\dots,w_d) & = \frac{\pr \left( \|\mbx\|> x + \psiA(x)z \mid
        W_q=\phiA(x)w_q,q=k+1,\dots,d \right)}{\pr(T>x/\sqrt{\lambda_1})}  \\
    & = \frac{\pr \left( T > \frac{x/\sqrt{\lambda_1} \; + \;\psiT(x/\sqrt{\lambda_1})z} { 1
          -g(\phiA(x)w_{k+1},\dots,\phiA(x)w_d)} \right)} {\pr(T>x/\sqrt{\lambda_1})} \; .
  \end{align*}
  Since we have defined $\phiA$ such that $x\phiA^2(x) = \sqrt{\lambda_1}
  \psiT(x/\sqrt{\lambda_1})$, we obtain that
  \begin{align*}
    \lim_{x\to\infty} k_x(z,w_{k+1},\dots,w_d) = \exp\bigg(-z-\frac12\sum_{q=k+1}^d
    \gamma_q^{-2}w_q^2\bigg) \; ,
  \end{align*}
  locally uniformly with respect to $z,w_{k+1},\dots,w_d$.
  Let $f$ be a continuous function with compact support in $\Rset^d$. Applying
  Lemma~\ref{lem:conditioning-uniform-sphere}, we obtain
  \begin{align}
    \lim_{x\to\infty} & \frac1{\phiA^{k-d}(x)\pr(T>x/\lambda_1)} \esp \left[
      f\left(W_1,\dots,W_k,\tfrac{W_{k+1}}{\phi(x)}, \dots, \tfrac{W_{k+1}}{\phi(x)} \right)
      \1{\{\|\mby\|>x+\psiA(x)z\}} \right] \nonumber \\
    & = \lim_{x\to\infty} \phiA^{k-d}(x) \esp \left[ f\left(W_1,\dots,W_k,\tfrac{W_{k+1}}{\phi(x)},
        \dots, \tfrac{W_{k+1}}{\phi(x)} \right) k_x\left(z,\tfrac{W_{k+1}}{\phiA(x)}, \dots,
        \tfrac{W_d}{\phiA(x)}\right) \right] \nonumber \\
    & = \frac{\Gamma(\frac d2)}{\pi^{(d-k)/2} \Gamma(\frac{k}2)} \; \rme^{-z} \, \int_{\Rset^{d-k}}
    \esp[f(\unifk,t_{k+1},\dots,t_d)] \, \rme^{-\frac12 \sum_{q=k+1}^d
      \gamma_q^{-2}w_q^2} \,  \rmd t_{k+1} \dots \rmd t_d \nonumber \\
    & = \frac{\Gamma(d/2)}{\Gamma(k/2)} 2^{({d-k})/2} \; \prod_{q=k+1}^d \gamma_q \; \rme^{-z} \,
    \esp[f(\unifk,\gamma_{k+1}G_{k+1},\dots,\gamma_dG_d)] \; ,
    \label{eq:desired}
  \end{align}
  where $G_{k+1},\dots,G_d$ are i.i.d.~standard Gaussian random variables.

  The last step is to extend the convergence~(\ref{eq:desired}) to all bounded continuous functions
  $f$.  By the comments after Lemma~\ref{lem:conditioning-uniform-sphere}, it suffices to prove that
  the function $k_x$ can be bounded by a function $k^*$ independent of $x$ and integrable with
  respect to Lebesgue's measure on~$\Rset^{d-k}$.
  For any $u\geq0$ and $p>0$, there exists a constant $C$ such that, for large enough $x$,
  \begin{align}
    \label{eq:borne-uniforme}
    \frac{ \pr(T> x + \psiT(x)u)}{\pr(T>x)} \leq C (1+u)^{-p} \; ,
  \end{align}
  (see e.g.~\cite[Lemma~5.1]{MR2739357}).  For $z\geq0$, this trivially yields
  \begin{align}
    \label{eq:borne-uniforme-avec-z}
    \frac{ \pr(T> x + \psiT(x)(z+u))}{\pr(T>x)} \leq C (1+u)^{-p} \; .
  \end{align}
  For a fixed $z<0$, we write
  \begin{align*}
    \frac{\pr(T>x+\psiT(x)(z+u))}{\pr(T>x)} = \frac{\pr(T>x+\psiT(x)z)}{\pr(T>x)}
    \frac{\pr(T>x+\psiT(x)z + \psiT(x)u)}{\pr(T>x+\psiT(x)z)} \; .
  \end{align*}
  The first ratio in the right hand side is convergent hence bounded and since $z$ is fixed, we can
  apply the bound~(\ref{eq:borne-uniforme}) to the second ratio, upon noting that $\lim_{x\to\infty}
  \psiT(x+\psiT(x)z)/\psiT(x)=1$ for all $z\in\Rset$. Thus (\ref{eq:borne-uniforme-avec-z}) also
  holds with a constant $C$ uniform with respect to $z$ in compact sets of $(-\infty,0]$.

  Since
  \begin{align*}
    \frac1{1 - g(w_{k+1},\dots,w_d)} \geq 1 + \frac12 \sum_{q=k+1}^d \gamma_q^{-2} w_q^2 \; ,
  \end{align*}
  we obtain, applying~(\ref{eq:borne-uniforme-avec-z}) with $u= \frac12 \sum_{q=k+1}^d \gamma_q^{-2}
  w_q^2$ and a fixed $z\in\Rset$,
  \begin{align*}
    k_x(z,w_{k+1},\dots,w_d) \leq \frac{\pr \left( T > x/\sqrt{\lambda_1} +\psiT(x/\sqrt{\lambda_1})
        (z+u) \right) } {\pr(T>x/\sqrt{\lambda_1})} \leq C (1+u) ^{-p} \; .
  \end{align*}
  For $p$ large enough, the function $k^*(w_1,\dots,w_d) =\left(1+\frac12\sum_{q=k+1}^d
    \gamma_q^{-2}w_q^2\right)^{-p}$ is integrable with respect to Lebesgue's measure on
  $\Rset^{d-k}$. This concludes the proof.
\end{proof}

\section{Asymptotic behavior of the Euclidean diameter}
\label{sec:diameter}

We now study the behavior of the diameter of the elliptical cloud $\{\mbx_i,1\leq i\leq
n\}$. Precisely, we investigate the asymptotic behavior of $(\diam2(\mbx)-2a_n)/b_n$ in the case
$k=1$ and $k>1$. As previously, we will prove our results with the vectors $\mby_i$, $i\geq1$.

\subsection{Case $k=1$: single maximum eigenvalue}

In this case, the points $P_{n,i}$ defined in~(\ref{eq:def-Pni}) become
\begin{align*}
  P_{n,i} = \left( \frac{\|\mby_i\|-a_n}{b_n}, \Theta_1, \frac{\Theta_{2}}{c_n},\dots,
    \frac{\Theta_d}{c_n} \right) \; .
\end{align*}

By Corollary~\ref{coro:ppconv}, $N_n = \sum_{i=1}^n \delta_{P_{n,i}}$ converges weakly to a Poisson
point process $N = \sum_{i=1}^\infty \delta_{P_i}$ on $\Rset\times\{-1,1\}\times\Rset^{d-1}$ with
$P_i = (\Gamma_i,\varepsilon_i,\tau_2G_{i,2},\dots,\tau_d G_{i,d})$, where $\varepsilon_i$, $i\geq1$
are i.i.d.~symmetric random variables with values in $\{-1,+1\}$ and the other components are as in
Corollary~\ref{coro:ppconv}.

By the independent increment property of the Poisson point process, the point process $N$ can be
split into two independent Poisson point processes $N^+$ and $N^-$ on $\Rset^d$ whose points are the
points of $N$ with second component equal to $+1$ or $-1$ respectively. The mean measure of both
processes is $\frac12\rme^{-x}\,\rmd x \, \Phi_{\tau_{2}}(\rmd t_{2}) \cdots \Phi_{\tau_d}(\rmd
t_d)$.

Then the point processes $N_n^+$ and $N_n^-$ defined by 
\begin{align*}
  N_n^+ = \sum_{i=1}^n \delta_{P_{n,i}} \1{\{\Theta_{i,1}>0\}} \; , \ \ N_n^- = \sum_{i=1}^n
  \delta_{P_{n,i}} \1{\{\Theta_{i,1}<0\}} \; ,
\end{align*}
converge weakly to the independent point processes $N^+$ and $N^-$ on $\Rset^{d}$ which can be
expressed as
\begin{align*}
  N^\pm = \sum_{i=1}^\infty \delta_{(\Gamma_i ^\pm,\tau_2G_{i,2}^\pm,\dots,\tau_dG_{i,d}^\pm)} \; ,
\end{align*}
where $\{\Gamma_i^\pm, i\geq1\}$ are the points of a Poisson point process with mean measure
$\frac12\rme^{-x}\rmd x$ on $\Rset$, and $\{G_{i,q}^\pm$, $i\geq1$, $q=2,\dots,d\}$ are
i.i.d.~standard Gaussian variables, independent of the points $\{\Gamma_i^\pm, i\geq 1\}$.

Since the mean measure is finite on the half planes $(x,\infty]\times[-\infty,\infty]$, there is
almost surely a finite number of points of $N^\pm$ in any of these half planes. Thus, the points of
$N^\pm$ can and will be numbered in decreasing order of their first component.

We can now state the main result of this section.
\begin{theorem}
  \label{theo:diam-k=1}
  Let $\{\mbx_i,i\geq1\}$ be a sequence of i.i.d.~random vectors with the same distribution as
  $\mbx$ and let the assumptions of Theorem~\ref{theo:norm-ddim} hold with $k=1$, i.e.~$\lambda_1>\lambda_2$, then
  \begin{align}
    \label{eq:limit-diam-k=1}
    \frac{\diam2(\mbx)-2a_n}{b_n} \convweak \max_{i,j\geq1} \left\{ \Gamma_i^+ + \Gamma_j^- -
      \frac14\sum_{q=2}^d \frac{\lambda_q}{\lambda_1-\lambda_q} (G_{i,q}^+-G_{j,q}^-)^2 \right\} \;    ,
  \end{align}
  where $\{(\Gamma_i^+,G_{i,2}^+,\dots,G_{i,d}^+), i\geq 1\}$ and
  $\{(\Gamma_i^-,G_{i,2}^-,\dots,G_{i,d}^-), i\geq1\}$ are the points of two independent point
  processes with mean measure $\frac12\rme^{-x} \rmd x \Phi(\rmd t_2)\dots\Phi(\rmd t_d)$.
\end{theorem}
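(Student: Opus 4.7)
The approach reduces the max-functional $\diam2(\mby)$ to a functional of the point processes $N_n^{\pm}$ defined just before the theorem, to which Corollary~\ref{coro:ppconv} applies. The key geometric step is a polar expansion: writing $\mby_i=\|\mby_i\|\Theta_i$ and using $\Theta_{i,1}^2=1-\sum_{q\geq 2}\Theta_{i,q}^2$, one obtains, for a pair $(i,j)$ whose first coordinates have opposite signs,
\begin{align*}
1+\Theta_i\cdot\Theta_j=\tfrac12\sum_{q\geq 2}(\Theta_{i,q}+\Theta_{j,q})^2+O\Bigl(\bigl(\textstyle\sum_{q\geq 2}(\Theta_{i,q}^2+\Theta_{j,q}^2)\bigr)^2\Bigr),
\end{align*}
and the analogous identity with $1-\Theta_i\cdot\Theta_j$ and $(\Theta_{i,q}-\Theta_{j,q})^2$ for a same-sign pair. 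These are inserted into $\|\mby_i-\mby_j\|^2=(\|\mby_i\|+\|\mby_j\|)^2-2\|\mby_i\|\|\mby_j\|(1+\Theta_i\cdot\Theta_j)$ or $(\|\mby_i\|-\|\mby_j\|)^2+2\|\mby_i\|\|\mby_j\|(1-\Theta_i\cdot\Theta_j)$ respectively.

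For a same-sign pair with $\|\mby_i\|,\|\mby_j\|=a_n+O_{\pr}(b_n)$ and $\Theta_{i,q}=O_{\pr}(c_n)$, this yields $\|\mby_i-\mby_j\|^2=O_{\pr}(a_n b_n)$, hence $(\|\mby_i-\mby_j\|-2a_n)/b_n\to-\infty$ and such pairs are irrelevant to the limiting diameter. For an opposite-sign pair at the same level, the expansion combined with the identity $a_n c_n^2=b_n$ produces
\begin{align*}
\frac{\|\mby_i-\mby_j\|-2a_n}{b_n}=\frac{\|\mby_i\|-a_n}{b_n}+\frac{\|\mby_j\|-a_n}{b_n}-\frac14\sum_{q=2}^d\Bigl(\frac{\Theta_{i,q}+\Theta_{j,q}}{c_n}\Bigr)^{\!2}+o_{\pr}(1).
\end{align*}
By Corollary~\ref{coro:norm-ddim} the rescaled polar coordinates $(\|\mby_i\|-a_n)/b_n$ and $\Theta_{i,q}/c_n$ converge jointly to $\Gamma_i^+$ and $\tau_q G_{i,q}^+$ on the positive side and to $\Gamma_j^-$ and $\tau_q G_{j,q}^-$ on the negative side. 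Since the $G_{j,q}^-$ are independent and symmetric, $(G_{i,q}^++G_{j,q}^-)^2$ can be replaced in law by $(G_{i,q}^+-G_{j,q}^-)^2$, matching the right-hand side of~(\ref{eq:limit-diam-k=1}).

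To upgrade these pointwise computations to the weak convergence of the maximum, I use the standard truncation argument for point-process maxima. Fix $K>0$: by Corollary~\ref{coro:ppconv}, $N^{\pm}$ has almost surely finitely many atoms with first coordinate $>-K$, and the corresponding top atoms of $N_n^+$ and $N_n^-$ jointly converge to those of $N^+$ and $N^-$. The continuous-mapping theorem applied to the finite-dimensional functional $\max_{i,j}\{\Gamma_i^++\Gamma_j^--\tfrac14\sum_q\tau_q^2(G_{i,q}^+-G_{j,q}^-)^2\}$ on the joint list of top atoms then yields weak convergence of the corresponding truncated diameter. Letting $K\to\infty$ uses that the Poisson maximum is almost surely attained (since $\Gamma_i^{\pm}\to-\infty$ and the quadratic correction is non-positive) and that pairs with either $(\|\mby_i\|-a_n)/b_n<-K$ or equal signs contribute at most $-K/2$ in probability.

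The main obstacle is the uniformity of the $o_{\pr}(1)$ remainder and of the ``non-top'' tail: Corollary~\ref{coro:ppconv} provides only the convergence of finitely many top atoms, whereas the Taylor-type approximation must hold simultaneously across all pairs with $\|\mby_i\|,\|\mby_j\|\geq a_n-K b_n$. The remedy is to complement the point-process convergence by the uniform tail bound~(\ref{eq:borne-uniforme}) for $T$, applied on annular slices $\{a_n-(m+1)b_n<\|\mby\|\leq a_n-m b_n\}$, in order to control the number of points on each slice and to rule out rare configurations where a non-top point carries anomalously large angular coordinates $\Theta_{i,q}/c_n$.
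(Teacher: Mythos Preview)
Your polar expansion and the identification of the limiting functional are correct and match the paper's computation. The difference lies in how you handle what you call ``the main obstacle''---the uniformity needed to pass from finitely many top atoms to the full maximum.

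You propose a truncation at a fixed level $-K$ followed by $K\to\infty$, supplemented by tail bounds on annular slices to control the non-top points. This can be made to work, but it is exactly where your write-up becomes a sketch rather than a proof: you do not actually carry out the slice argument, and the claim that ``pairs with $(\|\mby_i\|-a_n)/b_n<-K$ contribute at most $-K/2$'' is not justified (a pair with one small and one large norm can still have sum near zero, and a point with moderate norm need not have angular part $O_{\pr}(c_n)$).

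The paper sidesteps this entirely with a deterministic sandwich. Let $\mby_n^+$ (resp.\ $\mby_n^-$) be the sample point of largest norm among those with $\Theta_1>0$ (resp.\ $\Theta_1<0$), and set $M_n^{\pm}=\|\mby_n^{\pm}\|$. Then trivially
\[
\|\mby_n^+-\mby_n^-\|\ \le\ \diam2(\mby)\ \le\ M_n^++M_n^-.
\]
The gap between these two bounds is $A_n=M_n^++M_n^--\|\mby_n^+-\mby_n^-\|$, and $A_n/b_n$ converges weakly (to $\tfrac14\sum_q\tau_q^2(G_{1,q}^+-G_{1,q}^-)^2$) by point-process convergence applied to the two top atoms only. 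It follows \emph{deterministically} that any pair $(\hat\mby,\check\mby)$ realising the diameter must satisfy $\|\hat\mby\|\wedge\|\check\mby\|\ge M_n^+\wedge M_n^--A_n$; otherwise $\|\hat\mby-\check\mby\|\le\|\hat\mby\|+\|\check\mby\|<M_n^++M_n^--A_n\le\diam2(\mby)$, a contradiction. Hence the diameter is achieved within the random set $E_n=\{i:\|\mby_i\|\ge M_n^+\wedge M_n^--A_n\}$, whose cardinality is tight because its cutoff level, rescaled, converges to a finite random variable. One then applies continuous mapping on this finite set (via Skorokhod representation) without ever needing a fixed-$K$ truncation or slice estimates.

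In short: your approach is correct in outline but incomplete precisely at the point you flag; the paper's sandwich argument gives a cleaner, self-contained way to reduce to finitely many points and is worth adopting.
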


\paragraph{Comments}
The random variable defined in~(\ref{eq:limit-diam-k=1}) is almost surely finite, since it is upper
bounded by $\Gamma_1^++\Gamma_1^-$. The lower bound $\Gamma_1^++\Gamma_1^- - \frac14 \sum_{q=2}^d
\tfrac{\lambda_q}{\lambda_1-\lambda_q} (G_{1,q}^+-G_{1,q}^-)^2$ trivially holds. These two bounds
imply that the limiting distribution is tail equivalent to the sum of two independent Gumbel random
variables which is heavier tailed than a Gumbel distribution. However, it is not the sum of two
independent Gumbel random variables. Therefore this result is different from the result in the
spherical case in any dimension.

\subsubsection{Case of the dimension~2}

In dimension 2, a bivariate elliptical random vector $\mbx$ with correlation $\rho\in(0,1)$ can be defined by
\begin{align*}
  \mbx = T(\cos U, \rho\cos U + \sqrt{1-\rho^2} \sin U) = T(\cos U, \cos (U-U_0))\; ,
\end{align*}
where $U$ is uniformly distributed on $[0,2\pi]$, $\cos U_0 = \rho$ and $\sin U_0 =
\sqrt{1-\rho^2}$. The vector $\mbx$ admits the polar representation $\mbx=R(\cos\Theta,\sin\Theta)$
with
\begin{align*}
  R & = T \sqrt{1+\rho \cos(2U-U_0)} \; , \\
  \cos \Theta & = \frac{\cos U}{\sqrt{1+\rho \cos(2U-U_0)}} \; , \sin \Theta =
  \frac{\cos(U-U_0)}{\sqrt{1+\rho \cos(2U-U_0)}} \; .
\end{align*}

The correlation matrix of $\mbx$ is then
\begin{align*}
  \begin{pmatrix}1 & \rho \\ \rho & 1 \end{pmatrix} \; .
\end{align*}
Its eigenvalues are $1+\rho$ and $1-\rho$. By Theorem~\ref{theo:norm-ddim}, we know that $\|\mbx\|$
is in the domain of attraction of the Gumbel law and more precisely, as $x\to\infty$,
\begin{align*}
  \pr(\|\mbx\| >x\sqrt{1+\rho} ) \sim \sqrt{\frac{1+\rho}{\pi\rho} }\sqrt{\frac{\psiT(x)}{x}}
  \pr\left(T>x\right) \; .
\end{align*}

Note that $(1,1)$ is always an eigenvector associated with the eigenvalue $1+\rho$. This means that
the vectors in the cloud with large norm are localized close to the diagonal, whatever the value of
$\rho\in(0,1)$. More precisely, let $\widetilde{\Theta}_n$ be the angle of the point
$\widetilde{\mbx}_n$ of the cloud $\{\mbx_i,1\leq i\leq n\}$ such that
$\|\widetilde{\mbx}_n\|=M_n(\mbx)$.  Then $(\widetilde\Theta_n-\tfrac\pi4 -
\pi\1{\{\cos\widetilde\Theta_n<0\}})/c_n$ converges weakly to a Gaussian variable with mean zero and
variance $(1-\rho)/2\rho$.

By Theorem~\ref{theo:diam-k=1}, the limiting distribution of the diameter can be expressed as
\begin{align}
  \label{eq:loi-diam-d2}
  \max_{i,j\geq1} \left\{ \Gamma_i^+ + \Gamma_j^- - \frac{1-\rho}{8\rho}(G_{i}^+-G_{j}^-)^2 \right\}  \; ,
\end{align}
where $\{(\Gamma_i^+,G_i^+), i\geq 1\}$ and $\{(\Gamma_i^-,G_i^-), i\geq1\}$ are the points of two
independent point processes with mean measure $\frac12\rme^{-x} \rmd x \Phi(\rmd t)$.  If $\rho=1$,
the one dimensional case is recovered, but there is a discontinuity with the spherical case $\rho=0$
where the limiting distribution is Gumbel and the normalization is different.  Moreover, if
$\hat\mbx_n$ and $\check{\mbx}_n$ are the points such that $\|\hat\mbx_n -\check\mbx_n \| =
\diam2(\mbx)$, if $\hat\Theta_n$ and $\check\Theta_n$ are their respective angle such that
$\cos\hat\Theta_n>0$, $\cos\check\Theta_n<0$, then
$((\hat\Theta_n-\pi/4)/c_n,(\check\Theta_n-5\pi/4)/c_n)$ converges weakly to a pair
of~i.i.d.~Gaussian random variables with mean zero and variance $(1-\rho)/2\rho$.

\begin{figure}[h]

\centering 
\subfigure[$\rho=0.2$]
{\includegraphics[scale=0.4, clip=true,trim=0 70 0 80]{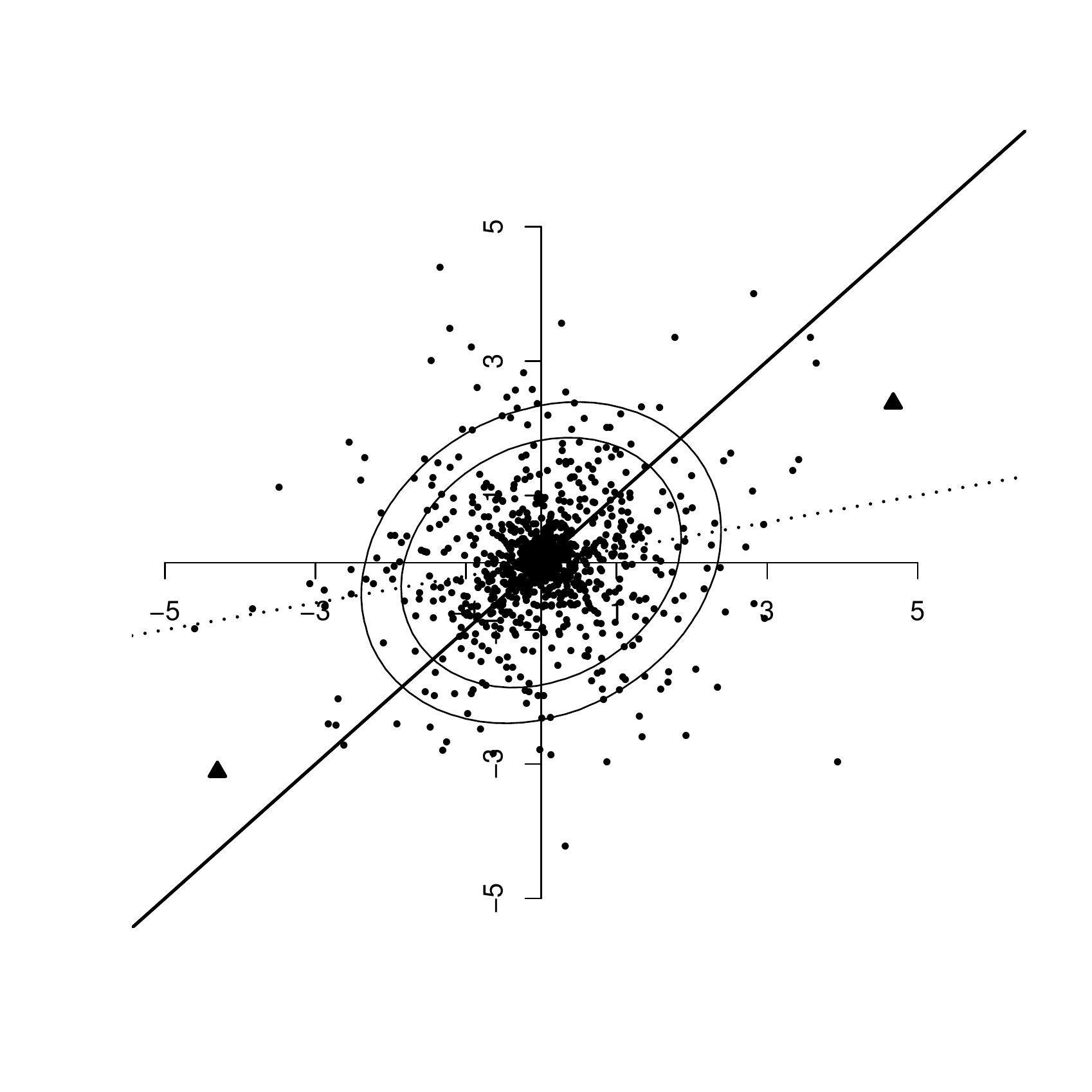}} 
\goodgap
\subfigure[$\rho=0.8$]
{\includegraphics[scale=0.4, clip=true,trim=0 70 0 80]{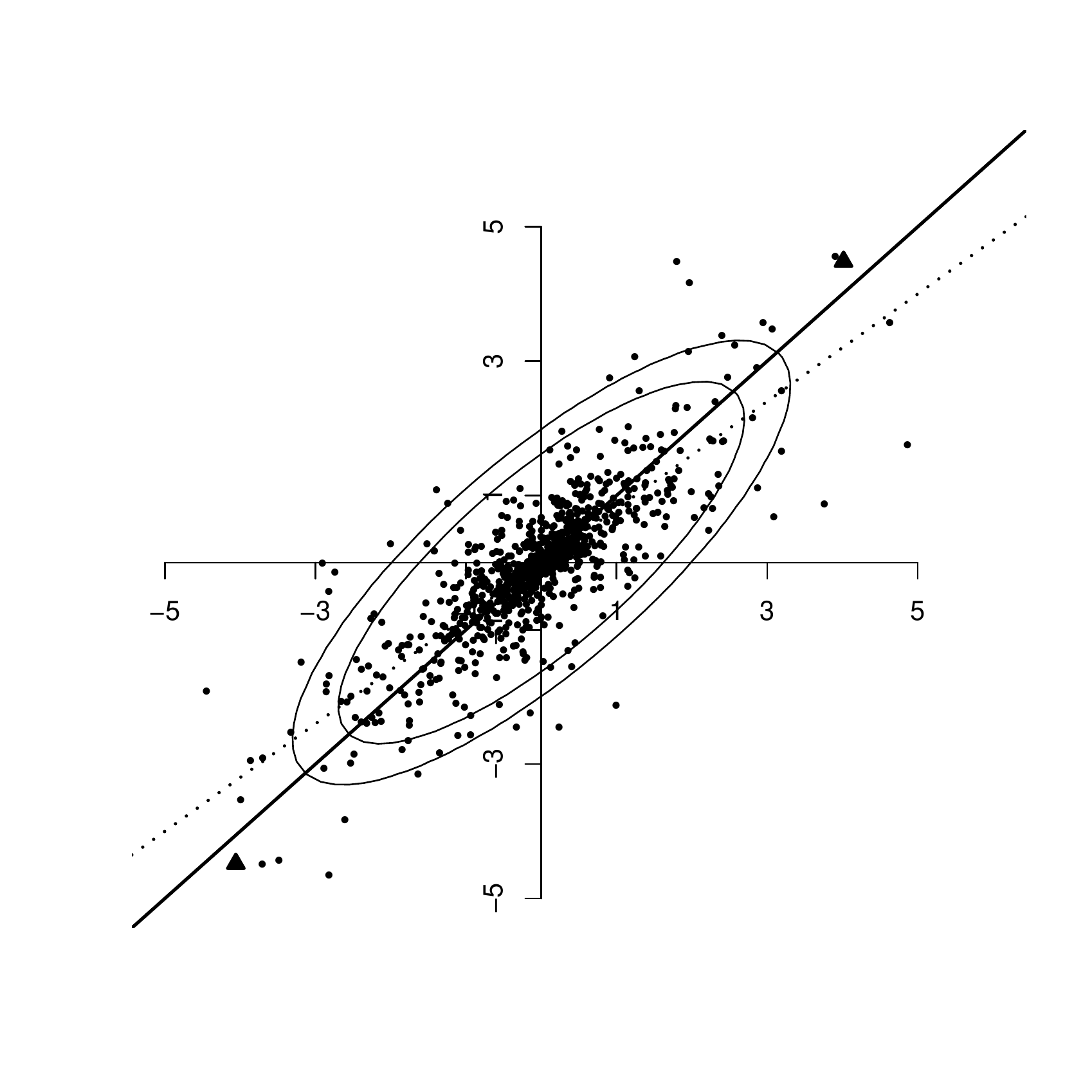}}
\vspace{-0.2cm}

\caption{\footnotesize{Two bidimensional Gaussian clouds. The black triangles are the points which realize
    the diameter. The black line is the diagonal and the dotted line is the regression line $y=\rho
    x$. The ellipses are the level lines of the density of the Gaussian distribution.}}
\label{fig:cloud2D_rho02}
\end{figure}

In Figure~\ref{fig:cloud2D_rho02} we show two sample clouds of size $1000$ of bivariate Gaussian
variables with correlation $\rho=0.2$ and $\rho=0.8$. The rate of convergence to the diagonal is
$O(\log n)$. In Figure~\ref{fig:cdf-diam-d=2}, we show the empirical cumulative distribution
function (cdf) of the limiting distribution based on 500 replications of the diameter of a Gaussian
cloud (with correlation $\rho=0.2$) of size 100 000.  In simulations, the indices realizing the
maximum in~(\ref{eq:loi-diam-d2}) are often $i=1$ and $j=1$. This implies that the limiting
distribution of the diameter should be close to the distribution of the sum of two independent
Gumbel random variables minus the square of a Gaussian random variable.  We show this distribution
together with the empirical and theoretical cdf of the diameter in Figure~\ref{fig:cdf-diam-d=2}.

\begin{figure}[h!]
\centering
 \includegraphics[width=12cm,height=6cm]{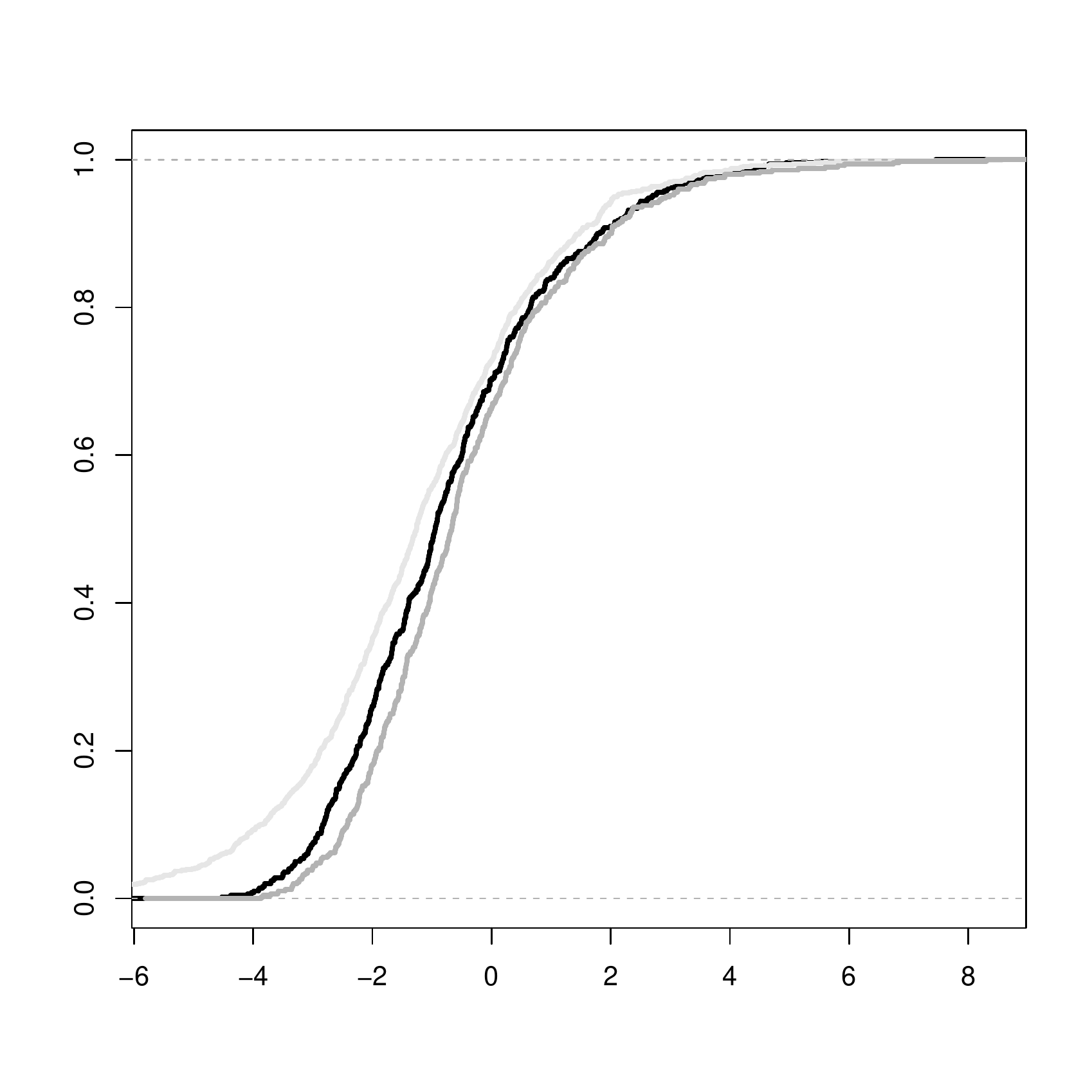}
 \caption{\footnotesize{Distribution function of the limiting distribution of the diameter of a
     bidimensional elliptical cloud. The black thick line is the (simulated) theoretic cdf; the
     thick gray line is the empirical cdf based on 500 clouds of 100 000 points. The thin gray line
     is the cdf of the sum of two independent Gumbel random variables with location parameter
     $\log2$.}}
 \label{fig:cdf-diam-d=2}
\end{figure}

\subsubsection{Proof of Theorem~\ref{theo:diam-k=1}}

Define the set $O_n = \{w\in\Rset^{d-1} \mid c_n^2\|w\|^2\leq 1\}$ and the function $f_n$ on
$\Rset\times\{-1,+1\}\times O_n\to \Rset^d$ by
\begin{align*}
  f_n (r,\epsilon,w) = (a_n+b_nr) (\epsilon \sqrt{1-c_n^2\|w\|^2},c_nw) \; .
\end{align*}

Define next the function $g_n$ on $\Rset\times\{-1,+1\}\times O_n\times
\Rset\times\{-1,+1\}\times O_n$ by
\begin{align*}
  g_n(r_1,\epsilon_1,w_1,r_2,\epsilon_2,w_2) = \frac{\|f_n(r_1,\epsilon_1,w_1)
    -f_n(r_2,\epsilon_2,w_2)\|-2a_n}{b_n} \; .
\end{align*}
Since $c_n\to0$, any $w\in\Rset^{d-1}$ is in $O_n$ for all large enough $n$. Then,  for any
$r_1,r_2>0$, for $\epsilon_1,\epsilon_2\in\{-1,1\}$ and any $w_1,w_2\in\Rset^{d-1}$,
\begin{align*}
  \lim_{n\to\infty} g_n(r_1,\epsilon_1,w_1,r_2,\epsilon_2,w_2) & = g(r_1,\epsilon_1,w_1,r_2,\epsilon_2,w_2) \\
  & =
  \begin{cases}
    -\infty & \mbox{ if } \epsilon_1\epsilon_2 = 1 \; , \\
    r_1+r_2 -\frac14 \sum_{q=2}^d (w_{1,q}-w_{2,q})^2 & \mbox{ if } \epsilon_1\epsilon_2 = - 1 \; .
  \end{cases}
\end{align*}
The convergence is locally uniform.  Moreover
\begin{align*}
  \frac{\diam2(\mby)-2a_n}{b_n} = \max_{1 \leq i < j \leq n}
  g_n(P_{n,i},P_{n,j}) \; .
\end{align*}
We want to conclude that the limiting distribution of $(\diam2(\mby)-2a_n)/b_n$ is $\max_{1 \leq i <
  j } g(P_i,P_j)$ (where the points $P_i$ are defined in~(\ref{eq:def-Pi})) by a continuous mapping
argument, but some care is needed.

Define $M_n^+ = \max\{\|\mby_i\| \mid \Theta_{1,i} > 0\}$ and $M_n^- = \max\{\|\mby_i\| \mid
\Theta_{1,i} < 0\}$. Let $\mby_n^+$ and $\mby_n^-$ be the points such that $M_n^+ = \|\mby_n^+\|$
and $M_n^- = \|\mby_n^-\|$. Then, by definition of the diameter, we have
\begin{align*}
\|\mby_n^+ - \mby_n^-\| \leq \diam2(\mby) \leq M_n^+ + M_n^- \; .
\end{align*}

Define $A_n = \|\mby_n^+ - \mby_n^-\| - M_n^+-M_n^-$. This yields the following lower and upper
bounds for the diameter:
\begin{align}
  \label{eq:encadrement-diametre}
  M_n^+ + M_n^- - A_n \leq \diam2(\mby) \leq M_n^+ + M_n^- \; .
\end{align}

As a corollary of the point process convergence, we obtain that
\begin{align*}
  \left(\frac{M_n^+-a_n}{b_n} ,\frac{M_n^--a_n}{b_n} ,\frac{A_n}{b_n} \right)\convweak \left(
    \Gamma_1^+, \Gamma_1^-, \frac14 \sum_{q=2}^d \tau_q^2(G_{1,q}^+ - G_{1,q}^-)^2 \right) \; .
\end{align*}

The bounds~(\ref{eq:encadrement-diametre}) imply that the diameter is achieved by a pair of points
$(\hat\mby_n,\check\mby_n)$ such that
\begin{align*}
  \|\hat\mby_n\| \wedge \|\check\mby_n\| \geq M_n^+ \wedge M_n^- - A_n \; .
\end{align*}
Indeed otherwise,
\begin{align*}
  \diam2(\mby) & = \|\hat\mby_n-\check\mby_n\| \leq \|\hat\mby_n\| + \|\check\mby_n\| < M_n^+ \wedge M_n^- - A_n +  M_n^+\vee M_n^-  \\
  & \leq M_n^+ + M_n^- - A_n \leq \diam2(\mby) \; ,
\end{align*}
which is a contradiction. This implies that
\begin{align*}
  \frac{\diam2(\mby) -2a_n}{b_n} = \max_{x,y \in E_n} g_n(x,y) \; ,
\end{align*}
where $E_n$ is the set of points of $N_n$ whose first component is at least equal to $(M_n^+ \wedge
M_n^--a_n)/b_n - A_n/b_n$,~i.e.
\begin{align*}
  E_n = \{P_{n,i} \mid \|X_i\| \geq M_n^+ \wedge M_n^- - A_n\} \; .
\end{align*}

Since by definition $\mby_n^+$ and $\mby_n^-$ belong to $E_n$, it obviously holds that
\begin{align*}
  \frac{\diam2(\mby) -2a_n}{b_n} \geq \max_{(x,y) \in E_n^+\times E_n^-} g_n(x,y) \geq
  g_n(P_n^+,P_n^-) \; ,
\end{align*}
where $E_n^+$ and $E_n^-$ are the points of $E_n$ whose second component is positive or negative,
respectively, and $P_n^\pm$ is the point of $E_n^\pm$ with the largest first component,
i.e.~$\mby_n^\pm$.

The convergence of the points of $N_n$ suitably numbered to those of $N$ imply that the sets $E_n^+$
and $E_n^-$ converge to the sets $E^+$ and $E^-$ of points of $N^+$ and $N^-$ defined by
\begin{align*}
  E^\pm = \bigg\{ P_i^\pm \mid \Gamma_i^\pm \geq \Gamma_1^+\wedge \Gamma_1^- - \frac14 \sum_{q=2}^d
  \tau_q^2(G_{1,q}^+-G_{1,q}^-)^2 \bigg\} \; .
\end{align*}
The sets $E^+$ and $E^-$ are almost surely finite since the points $\Gamma_i^\pm$ are only finitely
many in any interval~$(x,\infty)$.  This implies that the cardinals of the sets $E_n^\pm$ are
constant for large enough $n$.  By Skorohod's representation theorem
\cite[Theorem~3.30]{kallenberg:2002}, we may moreover assume that the points of $E_n^\pm$ converge
almost surely to those of $E^\pm$.

Since $g_n$ converges uniformly to $g$ on compact sets of $\Rset\times\{1\}\times\Rset^{d-1} \times
\Rset\times\{-1\}\times\Rset^{d-1}$ and since $P_{n}^\pm$ converge to $P_1^\pm$, $g_n(P_n^+,P_n^-)$
converges to $g(P_1^+,P_1^-)$ which is finite.  On the other hand, the points of $(E_n^+\times
E_n^+) \cup (E_n^-\times E_n^-)$ are all included in a fixed compact set and thus $\lim_{n\to\infty}
\max_{(x,y)\in(E_n^+\times E_n^+) \cup (E_n^-\times E_n^-)} g_n(x,y)=-\infty$. This implies that for
$n$ large enough,
\begin{align*}
  \max_{(x,y) \in (E_n^+\times E_n^+) \cup (E_n^-\times E_n^-)} g_n(x,y) \leq g_n(P_n^+,P_n^-) \; .
\end{align*}
We conclude that
\begin{align*}
  \frac{\diam2(\mby)-2a_n}{b_n} = \max_{x,y \in E_n} g_n(x,y) = \max_{(x,y)\in E_n^+\times
    E_n^-}g_n(x,y) \; .
\end{align*}

We can now apply a continuous mapping argument, since $g_n$ converges uniformly to $g$ on compact
sets of $\Rset\times\{1\}\times\Rset^{d-1} \times \Rset\times\{-1\}\times\Rset^{d-1}$. We obtain
\begin{align*}
  \frac{\diam2(\mby)-2a_n}{b_n} \convweak \max_{({x},{y}) \in
    E^+\times E^-} g(x,y) \; .
\end{align*}

To see that this is identical to (\ref{eq:limit-diam-k=1}), note that if
\begin{align*}
  \Gamma_i^+\wedge \Gamma_j^- < \Gamma_1^+\wedge \Gamma_1^- - \frac14\sum_{q=2}^d
  \tau_q^2(G_{1,q}^+-G_{1,q}^-)^2 \; ,
\end{align*}
then
\begin{align*}
  \Gamma_i^+ +\Gamma_j^- & - \frac14 \sum_{q=2}^d \tau_q^2
  (G_{i,q}^+-G_{j,q}^-)^2 \\
  & \leq\Gamma_i^++\Gamma_j^- \leq \Gamma_1^+\wedge \Gamma_1^- - \frac14
  \sum_{q=2}^d \tau_q^2(G_{1,q}^+-G_{1,q}^-)^2 + \Gamma_1^+ \vee \Gamma_1^- \\
  & = \Gamma_1^+ + \Gamma_1^- - \frac14 \sum_{q=2}^d \tau_q^2(G_{1,q}^+-G_{1,q}^-)^2 \; .
\end{align*}
This proves that the maximum of $g$ over all pairs of points of $N^+$ and $N^-$ is actually obtained
over the pairs of $E^+\times E^-$.

\subsection{Case $k>1$: multiple maximum eigenvalue}

If $k>1$, as in \cite{jammalamadaka:janson:2012}, a strengthening of domain of attraction condition
is needed to prove the result. Since an auxiliary function $\psi$ can be chosen differentiable and
such that $\lim_{x\to\infty} \psi'(x)=0$, it always holds that
$\lim_{x\to\infty}\psi(x+\psi(x)t)/\psi(x) = 1$ locally uniformly with respect to~$t\in\Rset$. We
must strengthen this uniformity as follows.

\begin{assumption}
  \label{hypo:strengthening-psi}
  For any positive function $\ell$ such that $\ell(x)\to\infty$ and $\ell(x) = O(\log(x/\psi(x)))$
  as $x\to\infty$,
  \begin{align}
    \label{eq:strengthening-psi-square}
    \lim_{x\to\infty} \ell(x) \sup_{|t|\leq \ell(x)}\left|\frac{\psi(x+\psi(x)t)}{\psi(x)} -
      1\right| = 0 \; .
  \end{align}
\end{assumption}

This assumption is satisfied by all usual distributions, such as the Weibull, Gaussian, exponential
or log-normal distributions. An important consequence is that the quantile of order $1-1/n$ of
$\|\mbx\|$ and $T$ can be related. Recall from Theorem~\ref{theo:norm-ddim} that, as $x\to\infty$,
\begin{align}
  \pr(\|\mbx\| > x\sqrt{\lambda_1}) \sim D_k \left( \frac{\psiT(x)}{x}\right)^{(d-k)/2} \pr(T>x)
  \;, \label{eq:equiv-survival-norm}
\end{align}
with
\begin{align*}
  D_k = \frac{\Gamma(d/2)}{\Gamma(k/2)} 2^{(d-k)/2} \left(\prod_{q=k+1}^d
    \frac{\lambda_1}{\lambda_1-\lambda_q} \right)^{1/2} \; .
\end{align*}
Let $a_n^T$ be such that $\pr(T>a_n^T) \sim 1/n$ and set $b_n^T=\psiT(a_n^T)$.  Define the sequence
$\{a_n\}$ by
\begin{align}
  \label{eq:quantile-relation}
  a_n = \sqrt{\lambda_1} \; a_n^T - \sqrt{\lambda_1} b_n^T \left(\frac{d-k}2 \log\frac{a_n^T}{b_n^T}
    - \log D_k \right) \; .
\end{align}
Then $\pr(\|\mbx\|>a_n)\sim 1/n$.  This is a consequence of the
equivalence~(\ref{eq:equiv-survival-norm}) and Lemma~\ref{lem:strengthening-log}. Let thus $a_n$ be
defined as in~(\ref{eq:quantile-relation}) and define $b_n=\psiA(a_n)$ and
\begin{align*}
  d_n = \frac{k-1}2 \log\frac{a_n}{b_n} - \log\log\frac{a_n}{b_n} - \log C_k \;
\end{align*}
with 
\begin{align}
  \label{eq:def-Ck}
  C_k = (2d-k-1) 2^{k-4} \pi^{-1/2} \Gamma(k/2) \left(\prod_{q=k+1}^d \frac{\lambda_1}
    {\lambda_1-\lambda_q} \right)^{-1/2} \; .
\end{align}

\begin{theorem}
  \label{theo:diam-k>1}
  Let $\{\mbx_i,i\geq1\}$ be a sequence of i.i.d.~random vectors with the same distribution as
  $\mbx$ and let the assumptions of Theorem~\ref{theo:norm-ddim} hold with $k\geq2$.  If moreover
  Assumption~\ref{hypo:strengthening-psi} holds, then for all $z\in\Rset$,
  \begin{align}
    \label{eq:convergence-gumbel}
    \lim_{n\to\infty} \pr \left( \frac{\diam2(\mbx)-2a_n}{b_n} + d_n \leq z \right)=
    \rme^{-\rme^{-z}} \; .
  \end{align}
\end{theorem}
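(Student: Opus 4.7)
My plan follows the strategy of~\cite{jammalamadaka:janson:2012}, reducing the diameter to a Poisson approximation in which the effective dimension is $k$ thanks to the localization principle of Theorem~\ref{theo:norm-ddim}. The key geometric identity
\begin{align*}
  \|\mby_i-\mby_j\|^2 = (\|\mby_i\|+\|\mby_j\|)^2 - \|\mby_i\|\|\mby_j\|\,\|\Theta_i+\Theta_j\|^2
\end{align*}
(with $\Theta_i=\mby_i/\|\mby_i\|$), combined with Corollary~\ref{coro:norm-ddim} which says that when $\|\mby_i\|>a_n$ the last $d-k$ coordinates of~$\Theta_i$ are of order $c_n=\phiA(a_n)$, gives after a Taylor expansion of the square root
\begin{align*}
  \frac{\|\mby_i-\mby_j\|-2a_n}{b_n} = \frac{\|\mby_i\|-a_n}{b_n}+\frac{\|\mby_j\|-a_n}{b_n} -\frac{a_n}{4b_n}\|\Theta_i^{(k)}+\Theta_j^{(k)}\|^2 + R_{n,ij},
\end{align*}
where $\Theta_i^{(k)}\in\sphere{k-1}$ is the renormalized projection onto the first $k$ coordinates and $R_{n,ij}=O(1)$ gathers the quadratic contribution of the Gaussian-scale orthogonal coordinates. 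Large diameters therefore arise only from pairs whose principal directions on $\sphere{k-1}$ are near-antipodal, on the scale $\|\Theta_i^{(k)}+\Theta_j^{(k)}\|$ of order $\sqrt{b_n d_n/a_n}$.

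With this expansion I would study the counting variable
\begin{align*}
  N_n(z) = \#\!\left\{1\leq i<j\leq n \;:\; \tfrac{\|\mby_i-\mby_j\|-2a_n}{b_n}+d_n>z\right\},
\end{align*}
the claim being equivalent to $N_n(z)\convweak\mathrm{Poisson}(\rme^{-z})$, which yields $\pr(N_n(z)=0)\to\rme^{-\rme^{-z}}$. The first-moment computation $\esp[N_n(z)]=\binom{n}{2}\pr(\text{pair condition})$ decomposes the event according to the two rescaled norms $X_i=(\|\mby_i\|-a_n)/b_n$ and the angular distance $\varepsilon=\|\Theta_i^{(k)}+\Theta_j^{(k)}\|$. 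Theorem~\ref{theo:norm-ddim} provides the joint tail of the two norms, the uniform law of $\Theta^{(k)}$ on $\sphere{k-1}$ gives $\pr(\varepsilon\leq\delta)\propto\delta^{k-1}$, and the orthogonal Gaussian fluctuations from $R_{n,ij}$ are integrated out against the law described in Corollary~\ref{coro:norm-ddim}. The calibrations of $d_n$ and of $C_k$ in~(\ref{eq:def-Ck}), together with the quantile relation~(\ref{eq:quantile-relation}), are precisely tuned so that, after multiplication by $\binom{n}{2}$, all prefactors combine to give $\esp[N_n(z)]\to\rme^{-z}$. Higher factorial moments are handled by the same type of computation, using that pairs sharing an index contribute negligibly, so $\esp[N_n(z)(N_n(z)-1)\cdots(N_n(z)-r+1)]\to\rme^{-rz}$ for every $r\geq1$.

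The principal obstacle is the first-moment estimate: since $d_n\sim\tfrac{k-1}{2}\log(a_n/b_n)$ diverges, the tail of $\|\mby_i\|$ must be controlled on the scale $b_n\log(a_n/b_n)$ rather than $b_n$, and the pointwise convergence~(\ref{eq:gumbel-DA-psi}) no longer suffices. This is exactly where Assumption~\ref{hypo:strengthening-psi} enters through~(\ref{eq:strengthening-psi-square}): it provides uniform control of the ratio $\psiT(x+\psiT(x)t)/\psiT(x)$ over windows of logarithmic size, which can then be lifted via Theorem~\ref{theo:norm-ddim} to the joint tail needed in the first-moment estimate. A secondary difficulty is bookkeeping all the prefactors that combine into the precise form $d_n=\tfrac{k-1}{2}\log(a_n/b_n)-\log\log(a_n/b_n)-\log C_k$: the surface measure of $\sphere{k-1}$, the Gaussian integrals from the orthogonal fluctuations, the product $\prod_{q=k+1}^d(\lambda_1/(\lambda_1-\lambda_q))^{1/2}$ inherited from $D_k$, and the Jacobian $(b_n/a_n)^{(k-1)/2}$ of the quadratic change of variable $u=a_n\varepsilon^2/(4b_n)$ must all be tracked; in particular the $-\log\log(a_n/b_n)$ correction arises from integrating the angular weight $\delta^{k-1}$ over the diverging range of feasible angular distances. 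Once both moment estimates are in place, the Poisson limit and hence~(\ref{eq:convergence-gumbel}) follow at once.
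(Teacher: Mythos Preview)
Your outline is correct and is essentially the paper's approach: Poisson approximation for the exceedance count, with the first-moment computation and the negligibility of triples sharing an index supplied by Lemma~\ref{lem:poisson-convergence} (the paper invokes the two-condition criterion of \cite{jammalamadaka:janson:1986} rather than computing all factorial moments, but the content is the same). The paper's execution differs only in that it works throughout in the $(T_i,\mbw_i)$ coordinates rather than $(\|\mby_i\|,\Theta_i)$ and introduces an explicit upper truncation $T_i\leq a_n^T+b_n^Ts_n$ before the moment computations, which is where Assumption~\ref{hypo:strengthening-psi} is cashed in via Lemmas~\ref{lem:strengthening-log}--\ref{lem:borne-uniforme-spherique}.
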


\paragraph{Comments}
\begin{itemize}
\item In the spherical case $k=d$, we recover \cite[Theorem~1.1]{jammalamadaka:janson:2012} and the
  constant $c_d$ therein is equal to the constant $C_k$ in~(\ref{eq:def-Ck}) (taking the product
  over an empty set of indices to be equal to 1).
\item We actually prove slightly more than the convergence~(\ref{eq:convergence-gumbel}). The proof
  can be used to check the conditions of Kallenberg's Theorem (see
  e.g.~\cite[Proposition~3.22]{resnick:1987}) which prove that the point process
  \begin{align*}
    \sum_{1 \leq i < j \leq n} \delta_{\frac{\|\mbx_i-\mbx_j\|-2a_n}{b_n}+d_n}
  \end{align*}
  converges to a Poisson point process with mean measure $\rme^{-x}\rmd x$ on
  $(-\infty,\infty]$. This result might be used for instance to derive the asymptotic distribution
  of the order statistics of the interpoint distances.
\end{itemize}

\subsubsection*{Proof of Theorem~\ref{theo:diam-k>1}}
\label{sec:proof-theo-diamk>1}

The proof is nearly the same as the proof of \cite[Theorem~1.1]{jammalamadaka:janson:2012}. We prove
the convergence of a $U$-statistic of indicators to a Poisson random variable.  The difference lies
in added technicalities due to the coordinates of the vector corresponding to the smaller
eigenvalues which have to be integrated out. In more precise terms, as in the proof of
Theorem~\ref{theo:diam-k=1}, we work with vague convergence of measures rather than weak
convergence.

Define $s_n = \frac12 \log d_n$ and
\begin{align*}
  S_n(z) & = \sum_{1 \leq i < j \leq n} \1{\{\|\mby_i - \mby_j\| > 2a_n - b_n d_n + b_nz\}} \; .
\end{align*}

Since $\pr(\diam2(\mby)>2a_n-b_nd_n+b_nz)=\pr(S_n(z)=0)$, it suffices to prove that for all
$z\in\Rset$, $S_n(z)$ converges weakly to a Poisson random variable with mean $\rme^{-z}$. For
technical reasons, as in\cite{jammalamadaka:janson:2012}, we must truncate the sum defining
$S_n(z)$.  Define
\begin{align*}
  S_n'(z) & = \sum_{1 \leq i < j \leq n} \1{\{\|\mby_i - \mby_j\| > 2a_n -b_n d_n + b_nz\}} \1{\{
    T_i \vee T_j \leq a_n^T + b_n^T s_n\}} \; .
\end{align*}
In words, we restrict the sum to the indices of vectors whose norm is not too large, hence not too
small either, since their distance must be large. Note that $S_n(z)\ne S_n'(z)$ implies that there
is at least one index $i$ such that $T_i>a_n^T+b_n^Ts_n$.  Since $s_n\to\infty$, this implies that
for any $A>0$,
\begin{align*}
  \limsup_{n\to\infty} \pr(S_n(z)\ne S_n(z')) & \leq \limsup_{n\to\infty}  n\pr(T>a_n^T+b_n^T s_n) \\
  & \leq \limsup_{n\to\infty} n\pr(T>a_n^T+b_n^T A) = \rme^{-A} \; .
\end{align*}
Since $A$ is arbitrary, this proves that for all $z\in\Rset$,
\begin{align*}
  \lim_{n\to\infty} \pr(S_n(z)\ne S'_n(z)) = 0 \; .
\end{align*}
This in turn implies that we only need to prove that $S'_n(z)$ converges weakly to a Poisson random
variable with mean $\rme^{-z}$.  This convergence is obtained by applying the criterion of
\cite[Theorem 3.1 and Remark 3.4]{jammalamadaka:janson:1986}.
\begin{lemma}
  \label{lem:poisson-convergence}
  Under the Assumptions of Theorem~\ref{theo:diam-k>1},
  \begin{align}
    \label{eq:poisson-convergence-esp}
    \lim_{n\to\infty} & \frac{n^2}2 \pr(\|\mby_1 - \mby_2\| > 2a_n - b_n d_n +
    b_nz \; ; \; T_2 \vee T_2 \leq a_n^T + b_n^T s_n) = \rme^{-z} \; ,  \\
    \lim_{n\to\infty} & n^3 \pr(\|\mby_1 - \mby_2\| \wedge \|\mby_1 - \mby_3\| > 2a_n - b_n d_n +
    b_nz \; ; \; T_2 \vee T_2 \leq a_n^T + b_n^T s_n) = 0 \;
    . \label{eq:poisson-convergence-neglige}
  \end{align}
\end{lemma}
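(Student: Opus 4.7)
The plan is to compute both probabilities by conditioning on $(T_1,T_2)$ (and $(T_1,T_2,T_3)$ in the second estimate), using the Gumbel tail of $T$ to handle the integration over the radial part, and then invoking the localization principle from Theorem~\ref{theo:norm-ddim} to control the angular configuration. First I would decompose
\begin{align*}
\|\mby_1-\mby_2\|^2 = \lambda_1 \sum_{q=1}^{k}(T_1 W_{1,q}-T_2 W_{2,q})^2 + \sum_{q=k+1}^{d}\lambda_q(T_1 W_{1,q}-T_2 W_{2,q})^2,
\end{align*}
introduce rescaled variables $V_{i,q}=W_{i,q}/\phiA(a_n)$ for $q>k$ (which are asymptotically $\tau_q$-scaled Gaussians when $T_i$ is large), and leave the first $k$ coordinates on the sphere $\sphere{k-1}$ via the vector $\unifk_i$ from Lemma~\ref{lem:conditioning-uniform-sphere}. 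A careful Taylor expansion then yields, when $T_i\sqrt{\lambda_1}\approx a_n$ and $\unifk_1\approx -\unifk_2$,
\begin{align*}
\|\mby_1-\mby_2\|\approx \sqrt{\lambda_1}(T_1+T_2)\bigl(1-\tfrac14|\unifk_1+\unifk_2|^2\bigr)-\tfrac{c_n^2}{4 }\sqrt{\lambda_1}(T_1+T_2)\sum_{q>k}\tau_q^{-2}(V_{1,q}-V_{2,q})^2 + o(b_n),
\end{align*}
with $c_n=\phiA(a_n)$, so the event in~(\ref{eq:poisson-convergence-esp}) decouples into a Gumbel-tail condition on $(T_1,T_2)$, a near-antipodal condition on $(\unifk_1,\unifk_2)$, and a quadratic condition on the Gaussian-scaled small-eigenvalue coordinates.

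Next I would carry out the integration. For the $(T_1,T_2)$ part, the von~Mises representation~(\ref{eq:vonmises}) combined with Assumption~\ref{hypo:strengthening-psi} gives the Gumbel density $\rme^{-(u_1+u_2)}\,\rmd u_1\rmd u_2$ for $U_i=(T_i-a_n^T)/b_n^T$, with $n^2$ compensating exactly the two factors $\pr(T>a_n^T)\sim 1/n$. For the angular part, Lemma~\ref{lem:conditioning-uniform-sphere} applied to both $\mbw_1$ and $\mbw_2$ reduces the integration in the $V_{i,q}$'s to a $2(d-k)$-dimensional Gaussian integral with the quadratic form in the exponent, and the integration over near-antipodal pairs $(\unifk_1,\unifk_2)$ on $\sphere{k-1}\times\sphere{k-1}$ produces a surface-area factor of order $c_n^{k-1}$ together with a $(k-1)$-dimensional Euclidean integral (after the standard linearization around $\unifk_2=-\unifk_1$). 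Assembling all constants gives the product in $C_k$, while the $c_n^{k-1}$ factor combined with the shift $b_n d_n$ in the threshold, through $\rme^{-d_n}=C_k\log(a_n/b_n)\,c_n^{k-1}$, yields the additional $\log(a_n/b_n)$ and $\log\log(a_n/b_n)$ pieces in $d_n$; the quantile calibration~(\ref{eq:quantile-relation}) is what absorbs the difference between the thresholds $2a_n$ and $2\sqrt{\lambda_1}a_n^T$.

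For the triple estimate~(\ref{eq:poisson-convergence-neglige}), I would condition on $\mby_1$ and use independence of $\mby_2,\mby_3$ to write
\begin{align*}
\pr\bigl(\|\mby_1-\mby_2\|\wedge\|\mby_1-\mby_3\|>2a_n-b_n d_n+b_nz\mid \mby_1\bigr) = q_n(\mby_1)^2,
\end{align*}
where $q_n(\mby_1)$ is the one-pair conditional probability. The analysis above shows $q_n(\mby_1)\lesssim (c_n^{k-1}\vee \text{log-factor})/n$ uniformly over the relevant truncation, so $n^3\esp[q_n(\mby_1)^2\1{\{T_1\leq a_n^T+b_n^T s_n\}}]$ contains one surplus factor of $n^{-1}\,c_n^{k-1}$ compared with~(\ref{eq:poisson-convergence-esp}), which tends to zero as $n\to\infty$.

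The main obstacle will be step two: keeping rigorous control of the Taylor remainders in the distance expansion uniformly over the truncation regime $T_i\leq a_n^T+b_n^T s_n$, and verifying that the integrated error terms are $o(\rme^{-z})$ at the $n^{2}$ scale. This is where Assumption~\ref{hypo:strengthening-psi} is essential: it provides the uniform control of $\psiT(x+\psiT(x)t)/\psiT(x)$ over the growing window $|t|\leq s_n$, allowing interchange of limits with the Gaussian-spherical integration and justifying the equivalence $\pr(\|\mbx\|>a_n)\sim 1/n$ through Lemma~\ref{lem:strengthening-log}.
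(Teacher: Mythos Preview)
Your strategy coincides with the paper's: write $\|\mby_1-\mby_2\|=\sqrt{\lambda_1}(T_1+T_2)-\sqrt{\lambda_1}b_n^T A_n$ with $A_n$ a function of $1+\langle\unifk_1,\unifk_2\rangle$ and the small-eigenvalue coordinates, integrate the radial part via Lemma~\ref{lem:strengthening-log} and the Jammalamadaka--Janson convolution estimate, integrate the angular part using Lemma~\ref{lem:conditioning-uniform-sphere}, absorb the threshold shift through~(\ref{eq:quantile-relation}), and for~(\ref{eq:poisson-convergence-neglige}) condition on $\mby_1$ and square. The paper packages the angular limit as a joint convergence of $\bigl(\tfrac{1+\langle\unifk_1,\unifk_2\rangle}{(c_n^T)^2},\tfrac{W_{i,q}}{c_n^T}\bigr)$ (Lemma~\ref{lem:convergence-jointe-cosinus}) rather than linearizing around $\unifk_2=-\unifk_1$, but this is only a bookkeeping difference.

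There is, however, a concrete error in your displayed expansion: the small-eigenvalue correction is \emph{not} proportional to $(V_{1,q}-V_{2,q})^2$. When you carry the computation through, the contribution from $U_i^{(k)}=\sqrt{1-\sum_{q>k}W_{i,q}^2}$ combines with the $\lambda_q$-terms to give a quadratic form with off-diagonal coefficient $\lambda_q/(2\lambda_1-\lambda_q)$, not $1$; in the paper's notation the limit is
\[
g(s,u,v)=\tfrac12 s+\tfrac12\sum_{q>k}\frac{u_q^2-2\rho_q u_q v_q+v_q^2}{\varpi_q^2(1-\rho_q^2)},\qquad \rho_q=\frac{\lambda_q}{2\lambda_1-\lambda_q},\ \varpi_q^2=\frac{2\lambda_1-\lambda_q}{2(\lambda_1-\lambda_q)},
\]
so the limiting $(U_{1,q},U_{2,q})$ are correlated Gaussians, not the difference of independent ones. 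With your form the Gaussian integral would produce the wrong normalizing constant and ``assembling all constants gives $C_k$'' would fail. Once you correct the quadratic form the rest of your plan goes through exactly as in the paper; the uniform domination needed to pass from compactly supported to bounded test functions is provided by the bound~(\ref{eq:borne-uniforme-spherique}) together with the lower bound $h_n\ge c(s+\sum u_q^2+\sum v_q^2)$ on $\mct_n$.
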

The convergences~(\ref{eq:poisson-convergence-esp}) and~(\ref{eq:poisson-convergence-neglige}) imply
that $S'_n(z)$ converges weakly to a Poisson distribution with mean $\rme^{-z}$ and this concludes
the proof of Theorem~\ref{theo:diam-k>1}.

The proof of Lemma~\ref{lem:poisson-convergence} consists mainly in checking the vague convergence
of certain measures and then strengthening this convergence to weak convergence by bounded
convergence arguments.  The requested bounds are obtained by means of
Assumption~\ref{hypo:strengthening-psi} which is slightly stronger than the assumption of uniformity
used in \cite[Theorem~1.1]{jammalamadaka:janson:2012}, but is satisfied for all usual distributions.
Apart from these arguments, the proof follows the same lines as the proof of
\cite[Theorem~1.1]{jammalamadaka:janson:2012}.  In view of their tedious technical nature, this
proof is postponed to Section~\ref{sec:proof-lemmas}.

Let us note that as a by-product of the proof, we obtain in
Lemma~\ref{lem:convergence-jointe-cosinus} the convergence of the cosine of the angle between two
vectors $\mby_1$ and $\mby_2$ and of the components corresponding to the smaller eigenvalues, given
that their distance is large and their norm is large, but not too large (this is quantified in the
definition of $S'_n(z)$).  This parallels the convergence proved in Theorem~\ref{theo:norm-ddim},
but we do not explicitly use it in the proof of Theorem~\ref{theo:diam-k>1}. It may eventually prove
to be of interest for some other problem.

\section{The  $l^q$ norm of a random spherical vector}
\label{sec:lq-spherical}
In this Section, the localization principle will be used to answer another question raised
in~\cite{jammalamadaka:janson:2012}, namely, the asymptotic behavior of the $l^q$ diameter of a
cloud of spherical random vectors in dimension $d\geq2$.  Define the $l^q$ norm of a vector
$x\in\Rset^d$ by
\begin{align*}
  \|x\|_q = \left(|x_1|^q+\cdots + |x_d|^q \right)^{1/q} \; .
\end{align*}
For $d\geq2$ and $q\geq1$, $q\ne2$, the maximum of the $l^q$ norm is achieved on the $l^2$ sphere
$\sphere{d-1}$ at isolated points. Specifically,
\begin{itemize}
\item if $q\in[1,2)$, then $\max_{w\in\sphere{d-1}} \|w\|_q=d^{1/q-1/2}$; it is achieved at the
  $2^d$ ``diagonal'' points $(\pm d^{-1/2},\dots,\pm d^{-1/2})$.
\item if $q\in(2,\infty)$, then $\max_{w\in\sphere{d-1}} \|w\|_q=1$;
  the maximum is achieved at the $2d$ intersections of the axes with $\sphere{d-1}$.
\end{itemize}
Therefore, the localization phenomenon will occur. A spherical vector whose norm is large must be
close to the direction of one of these maxima, and the diameter will be achieved by points which are
nearly diametrically opposed along one of these directions.

We consider a spherically distributed random vector, i.e.~$\mbx = T\mbw$ where $T$ and $\mbw$ are
independent and $\mbw$ is uniform on $\sphere{d-1}$. Let $\{\mbx_i,i\geq1\}$ be a sequence of
i.i.d.~vectors with the same distribution as $\mbx$.  Define
\begin{align*}
  \maxq(\mbx) = \max_{1 \leq i \leq n} \|\mbx_i\|_q \; , \ \ \diamq(\mbx) = \max_{1 \leq i < j \leq
    n} \|\mbx_i - \mbx_j\|_q \; .
\end{align*}
The behavior of $\|\mbx\|_q$ differs only by constants for $q\in[1,2)$ and for $q>2$, whereas the
diameter has two very different behavior if $q\in[1,2)$ and $q>2$. Therefore, we study these two
cases separately. We start with the case $q>2$ which is somewhat easier.

\subsection{Case $q > 2$}
For $q>2$, the maximum of the $l^q$ norm on the $l^2$ sphere is 1 and is achieved at the $2d$
intersections of the sphere with the axes.  We will see that the localization of the vectors with
large norms occurs at a very fast rate, and therefore the diameter behaves asymptotically as in the
one dimension case.

For $i=1,\dots,d$, define $\Delta_i = \{x\in\Rset^d \mid x_i > \max_{1 \leq j\leq d, j\ne i}
|x_j|\}$ and $\Delta_{-i} = -\Delta_i$. Then $\cap_{1 \leq i,-i \leq d} \Delta_i =\emptyset$ and
$\cup _{1 \leq i, -i \leq d} \overline{\Delta_i} = \Rset^d$ (where $\bar{A}$ is the closure of
a set $A\subset\Rset^d$).  Define $\phi(x) = \sqrt{\psi(x)/x}$.
\begin{theorem}
  \label{theo:norm-lq-q>2}
  Let $\mbx = T\mbw$ where $T$ and $\mbw$ are independent, $\mbw$ is uniform on $\sphere{d-1}$
  and~$T$ satisfies~(\ref{eq:domaine}).  For $q\in(2,\infty]$,
  \begin{align}
    \label{eq:norm-lq-q>2}
    \pr(\|\mbx\|_q > x) \sim \frac{2^{(d-1)/2} \, d \, \Gamma(d/2)} {\Gamma(1/2)} \phi^{d-1}(x)
    \pr(T>x) \; .
  \end{align}
  Moreover, conditionally on $\|\mbx\|_q>x$ and $\mbx\in\Delta_1$, as $x\to\infty$,
  \begin{align*}
    \left( \frac{\|\mbx\|_q-x}{\psi(x)} , \frac{1-W_1}{\phi^2(x)}, \frac{W_2}{\phi(x)} ,\dots,
      \frac{W_d}{\phi(x)} \right) \convweak (E, \frac12(G_2^2+\dots+G_d^2), G_2,\dots,G_d)
  \end{align*}
  where $E$ is an exponential random variable with mean $1$ and $G_2,\dots,G_d$ are i.i.d.~standard
  Gaussian random variables, independent of $E$.
\end{theorem}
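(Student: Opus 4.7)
The plan is to mimic the proof of Theorem~\ref{theo:norm-ddim}, using the $2d$-fold symmetry of $\mbw$ and $\|\cdot\|_q$ to reduce to the region $\Delta_1$ and parametrize $\mbw\in\Delta_1$ by its last $d-1$ coordinates. By symmetry, $\pr(\|\mbx\|_q>x\,,\,\mbx\in\Delta_{\pm i})$ does not depend on $i\in\{1,\dots,d\}$, so it suffices to compute the probability on $\Delta_1$ and multiply by $2d$. On $\Delta_1$ one has $W_1 = \sqrt{1-\sum_{j\geq 2}W_j^2}$ and $\|\mbx\|_q = T(1-h(W_2,\dots,W_d))$ with
\begin{align*}
h(w_2,\dots,w_d) = 1 - \Bigl((1-\textstyle\sum_{j\geq 2}w_j^2)^{q/2} + \sum_{j\geq 2}|w_j|^q\Bigr)^{1/q}.
\end{align*}
Since $q>2$, the terms $|w_j|^q$ are of order $s^{-q}$ and negligible compared to $s^{-2}$, and a Taylor expansion of $(1-r^2)^{q/2}$ yields, locally uniformly, $\lim_{s\to\infty} s^2 h(s^{-1}w_2,\dots,s^{-1}w_d) = \tfrac12\sum_{j=2}^d w_j^2$. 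This is the exact analogue of the expansion of $g$ in the proof of Theorem~\ref{theo:norm-ddim}, with the coordinate-wise ``Gaussian variance'' equal to $1$.

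From this point the argument follows the elliptical proof almost verbatim. Introducing
\begin{align*}
k_x(z,w_2,\dots,w_d) = \frac{\pr(T > (x+\psi(x)z)/(1-h(\phi(x)w_2,\dots,\phi(x)w_d)))}{\pr(T>x)}
\end{align*}
and using $x\phi^2(x)=\psi(x)$ together with~(\ref{eq:gumbel-DA-psi}), one obtains the locally uniform limit $k_x(z,w)\to\exp(-z-\tfrac12\sum w_j^2)$. Applying Lemma~\ref{lem:conditioning-uniform-sphere} with $k=1$ (on $\Delta_1$ the sign of $W_1$ is $+1$, so only the ``$+1$'' atom of $\unifk$ contributes, giving an extra factor $1/2$), for any continuous $f$ with compact support in $\Rset^d$,
\begin{align*}
\frac{\phi^{d-1}(x)}{\pr(T>x)}\,\esp\Bigl[f\bigl(W_1,\tfrac{W_2}{\phi(x)},\dots,\tfrac{W_d}{\phi(x)}\bigr)\,\1{\{\mbx\in\Delta_1,\,\|\mbx\|_q>x+\psi(x)z\}}\Bigr]
\end{align*}
converges to $\tfrac{\Gamma(d/2)\,2^{(d-1)/2}}{2\,\Gamma(1/2)}\,\rme^{-z}\,\esp[f(1,G_2,\dots,G_d)]$. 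Taking $f\equiv1$ and $z=0$ and summing over the $2d$ symmetric regions gives~(\ref{eq:norm-lq-q>2}). The conditional convergence follows by dividing the joint limit by the marginal tail; on $\Delta_1$ the quantity $(1-W_1)/\phi^2(x) = (1-\sqrt{1-\sum W_j^2})/\phi^2(x)$ is a continuous function of $(W_j/\phi(x))_{j\geq 2}$ and converges to $\tfrac12\sum G_j^2$ by the continuous mapping theorem.

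The main technical point, as in the proof of Theorem~\ref{theo:norm-ddim}, is to extend the convergence from compactly supported $f$ to bounded continuous $f$, which is what allows one to identify tail asymptotics and conditional laws of unbounded functionals. For this one establishes a uniform majorant of $k_x$ independent of $x$: on $\{\|v\|\leq c_0\}$, the Taylor expansion gives $h(v)\geq c\|v\|^2$, and applying~(\ref{eq:borne-uniforme-avec-z}) with $u = c\|w\|^2$ yields $k_x(z,w)\leq C(1+c\|w\|^2)^{-p}$, integrable over $\Rset^{d-1}$ for $p$ large; on the complementary set $\{\phi(x)\|w\|>c_0\}$, the denominator $1-h(\phi(x)w)$ stays bounded away from $1$, hence $k_x(z,w)$ tends to $0$ by the Gumbel tail property $\pr(T>\alpha x)/\pr(T>x)\to 0$ for $\alpha>1$, with a contribution that is negligible after multiplication by $\phi^{d-1}(x)$. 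Establishing this uniform domination globally on the unit ball is the only delicate step.
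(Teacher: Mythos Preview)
Your proposal is correct and follows the same overall strategy as the paper---localize at the maximizers of $\|\cdot\|_q$ on $\sphere{d-1}$, Taylor-expand $\|\mbw\|_q$ there, use the Gumbel tail relation~(\ref{eq:gumbel-DA-psi}), and justify the passage from compactly supported to bounded test functions by a uniform majorant. The technical execution, however, differs from the paper's.

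You parametrize $\mbw\in\Delta_1$ by $(W_2,\dots,W_d)$ and invoke Lemma~\ref{lem:conditioning-uniform-sphere} with $k=1$, exactly mirroring the proof of Theorem~\ref{theo:norm-ddim}; the indicator $\1{\Delta_1}$ is absorbed as an $x$-dependent factor that converges to $\1{\{\unifk=+1\}}$ in the rescaled limit, giving the factor $1/2$. The paper instead does \emph{not} call Lemma~\ref{lem:conditioning-uniform-sphere} here: it parametrizes by $W_1=u$ using its explicit marginal density $\beta_d(1-u^2)^{(d-3)/2}$, writes $(W_2,\dots,W_d)=\sqrt{1-u^2}\,\tilde\mbw$ with $\tilde\mbw$ uniform on $\sphere{d-2}$, performs the change of variable $u=1-\phi^2(x)v/2$, and recognizes the resulting $v^{(d-3)/2}\rme^{-v/2}$ integral as a $\chi^2_{d-1}$ expectation; the Gaussian limit then appears via $R\tilde\mbw$ with $R^2\sim\chi^2_{d-1}$. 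Your route is more modular (it recycles Lemma~\ref{lem:conditioning-uniform-sphere} and the domination bound~(\ref{eq:borne-uniforme-avec-z}) verbatim), while the paper's route produces the joint limit of $(1-W_1)/\phi^2(x)$ and $(W_j/\phi(x))_{j\ge2}$ directly without the extra continuous-mapping step you add at the end. Two small points worth making explicit in your write-up: (i) for $q=\infty$ the formula for $h$ changes but on $\Delta_1$ one has $\|\mbw\|_\infty=W_1$, so $h(v)=1-\sqrt{1-\|v\|^2}$ and the same expansion holds; (ii) in the tail-domination step, the region $\{\phi(x)\|w\|>c_0\}$ must be intersected with $\Delta_1$ to guarantee $\|\mbw\|_q\le 1-\delta$ (otherwise the other axes intrude), after which $\pr(T>x/(1-\delta))/\pr(T>x)$ decays faster than any power of $\phi(x)$ by~(\ref{eq:vonmises}).
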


\paragraph{Comments} If $T^2$ has a $\chi^2$ distribution with $d$ degrees of freedom, then $\mbx$ is
a standard $d$-dimensional Gaussian vector and \Cref{theo:norm-lq-q>2} is a particular case of
\cite[Theorem~1 and Example~1]{hashorva:korshunov:piterbarg:2013}. In that case, $\pr(T>x) \sim (1/2)^{d/2}
\Gamma^{-1}(d/2)$, $\phi(x)=1/x$ and the equivalent~(\ref{eq:norm-lq-q>2}) yields
\begin{align*}
  \pr(\|\mbx\|_q^q>x) \sim \frac{2d}{\sqrt{2\pi}} x^{-1/q} \rme^{-\frac12 x^{2/q}} \; .
\end{align*}
The tail depends on $d$ only in the constant but not in the exponent. This is expected since
$\|\mbx\|_q^q$ is the sum of $d$ independent random variables with subexponential tails. Hence, by
definition of subexponentiality, the this sum is tail equivalent to $d$ times the tail of one
variable. This is specific to the Gaussian case, since otherwise the components of $\mbx$ are not
independent.

\begin{proof}[Proof of~\Cref{theo:norm-lq-q>2}]
  If $\mbw$ is uniformly distributed on $\sphere{d-1}$, then the distribution of $W_1$ has the
  density $\beta_d (1-s^2)^{(d-3)/2}$ on $[-1,1]$ with $\beta_d =
  \frac{\Gamma(d/2)}{\Gamma((d-1)/2)\Gamma(1/2)}$.  Define $\tilde\mbw =
  (1-W_1^2)^{-1/2}(W_2,\dots,W_d)$. By Lemma~\ref{lem:conditioning-uniform-sphere}, $\tilde\mbw$ is
  uniformly distributed on $\sphere{d-2}$ and independent of $W_1$.
  Let $f$ be continuous with compact support in $\Rset^d$ and define
  the function $k_x$ on $[0,\phi^{-2}(x)]\times\Rset$ by
  \begin{multline*}
    k_x(u,z) \\
    = \frac1{\pr(T>x)}\esp \left[ f\left(\frac{1-u}{\phi^2(x)}, \frac{\sqrt{1-u^2}}{\phi(x)}
        \tilde\mbw\right) \1{\{T > \frac{x + \psiT(x)z} {\{u^q + ((1-u^2)^{q/2}
          \|\tilde\mbw\|_q^q\}^{1/q}} \}} \1{\{u > \sqrt{1-u^2} \max_{2 \leq i \leq d}
        |\tilde{W}_i|\}} \right] \; ,
  \end{multline*}
  for $2<q<\infty$ or
  \begin{align*}
    k_x(u,z) = \frac1{\pr(T>x)} \esp \left[ f\left(\frac{1-u}{\phi^2(x)},
        \frac{\sqrt{1-u^2}}{\phi(x)} \tilde\mbw\right) \1{\{T > \frac{x + \psiT(x)z} {u \vee
          \sqrt{1-u^2} \|\tilde\mbw\|_\infty}\}} \1{\{u > \sqrt{1-u^2} \max_{2 \leq i \leq d}
        |\tilde{W}_i|\}} \right] \; ,
  \end{align*}
  for $q=\infty$.  Then the following convergence holds, locally uniformly on
  $[0,\infty)\times\Rset$,
  \begin{align*}
    \lim_{x\to\infty} k_x(1-\phi^2(x)v/2,z) = \rme^{-z} \,\rme^{-v/2}\, \esp \left[ f\left(v/2,
        \sqrt{v} \tilde\mbw\right) \right] \; .
  \end{align*}
  This yields, for $f$ continuous and compactly supported on $\Rset^d$,
  \begin{align*}
    \esp & \left[ f\left(\frac{1-W_1}{\phi^2(x)}, \frac{(W_2,\dots,W_d)}{\phi(x)} \right)
      \1{\{\|\mbx\|_q > x + \psiT(x)z\}} \1{\Delta_1}(\mbw)\right] \\
    & = \beta_d \pr(T>x) \, \int_{0}^1 k_x(u,z) (1-u^2)^{(d-3)/2} \, \rmd u \\
    & = \beta_d \phi^{d-1}(x) \pr(T>x) \, \int_{0}^{2\phi^{-2}(x)} k_x(1-\phi^2(x)v/2,z)
    (v-\phi^2(x)v^2/4)^{(d-3)/2} \, \rmd v/2    \\
    & \sim \beta_d \phi^{d-1}(x) \pr(T>x) \, \rme^{-z} \int_0^\infty \esp \left[ f\left(v/2,
        \sqrt{v} \tilde\mbw\right) \right]  \,  v^{(d-3)/2} \, \rme^{-v/2} \, \rmd v/2 \\
    & = \beta_d \phi^{d-1}(x) \pr(T>x) \, \rme^{-z} \, \Gamma((d-1)/2) 2^{(d-3)/2} \, \esp \left[
      f\left(R^2/2, R \tilde\mbw \right) \right] \; ,
  \end{align*}
  where $R^{2}$ has a $\chi^2$ distribution with ${d-1}$ degrees of freedom and is independent
  of $\tilde\mbw$.  This implies that $\sqrt{R} \tilde\mbw$ is a $(d-1)$ dimensional standard
  Gaussian vector. Equivalently, $(R^2/2,R\tilde\mbw)$ can be expressed as
  $(\frac12(G_2^2+\cdots+G_d^2),G_2,\dots,G_d)$, where $G_2,\dots,G_d$ are i.i.d.~standard Gaussian
  random variables. This yields, for $f$ continuous and compactly supported on $\Rset^d$,
  \begin{multline*}
    \lim_{x\to\infty} \frac1{\phi^{d-1}(x)\,\pr(T>x)} \esp \left[ f\left(\frac{1-W_1}{\phi^2(x)},
        \frac{(W_2,\dots,W_d)}{\phi(x)} \right) \1{\{\|\mbx\|_q > x + \psiT(x)z\}}
      \1{\Delta_1}(\mbw)\right] \\
    = \frac{2^{(d-3)/2} \Gamma(d/2)} {\Gamma(1/2)}
    \esp\left[f\left(\tfrac12(G_2^2+\cdots+G_d^2),G_2,\dots,G_d\right) \right] \rme^{-z} \; .
  \end{multline*}
  The last step is to extend the convergence to bounded continuous functions. This is done as in the
  proof of Theorem~\ref{theo:norm-ddim}, using the bound~(\ref{eq:borne-uniforme-avec-z}). Summing
  these equivalent over the $2d$ regions $\Delta_i$ yields~(\ref{eq:norm-lq-q>2}).
\end{proof}

Define
\begin{align} 
  \label{eq:def-mbu}
  \mbu = \frac{\mbx}{\|\mbx\|_q} = \frac{\mbw}{\|\mbw\|_q} \; .
\end{align}
\begin{corollary}
  \label{coro:excess-U}
  Under the assumptions of Theorem~\ref{theo:norm-lq-q>2}, as $x\to\infty$, conditionally on
  $\|\mbx\|_q>x$ and $\mbx\in\Delta_1$,
  \begin{align}
    \label{eq:convergence-degeneree}
    \left( \frac{\|\mbx\|_q-x}{\psi(x)} , \frac{1-U_1}{\phi^2(x)}, \frac{U_2}{\phi(x)} ,\dots,
      \frac{U_d}{\phi(x)} \right) \convweak (E, 0 , G_2,\dots,G_d) \; ,
  \end{align}
  where $E$ is an exponential random variable with mean $1$ and $G_2,\dots,G_d$ are i.i.d.~standard
  Gaussian random variables, independent of $E$.
\end{corollary}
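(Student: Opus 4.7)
The corollary will follow from Theorem~\ref{theo:norm-lq-q>2} by a Slutsky-type argument applied coordinatewise. The first coordinate $(\|\mbx\|_q-x)/\psi(x)$ is common to both statements, so nothing needs to be done for it. For $i\geq 2$, write $U_i/\phi(x) = (W_i/\phi(x))/\|\mbw\|_q$; under the conditioning $\|\mbx\|_q>x$, $\mbx\in\Delta_1$, Theorem~\ref{theo:norm-lq-q>2} gives $W_1\to 1$ and $W_i=O_\pr(\phi(x))\to 0$ for $i\geq 2$, hence $\|\mbw\|_q\to 1$ in probability. Slutsky then yields $U_i/\phi(x) \convweak G_i$, jointly for $i=2,\ldots,d$ and with the first coordinate.

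The real content of the corollary is the collapse of the second coordinate $(1-U_1)/\phi^2(x)$ to $0$, in contrast with the nondegenerate limit $\tfrac12\sum_{i=2}^d G_i^2$ obtained in Theorem~\ref{theo:norm-lq-q>2} for $(1-W_1)/\phi^2(x)$. The point is that both $1-W_1$ and $1-\|\mbw\|_q$ are of order $\phi^2(x)$ with identical leading coefficients, so their difference is of strictly smaller order. The cleanest way to see this is to write
\[
\|\mbw\|_q^q - W_1^q = \sum_{i=2}^d |W_i|^q = O_\pr(\phi^q(x)) \; ,
\]
and apply the mean value theorem: there exists a random $c$ between $W_1$ and $\|\mbw\|_q$, both tending to $1$ in probability, such that $\|\mbw\|_q - W_1 = (\|\mbw\|_q^q - W_1^q)/(q\, c^{q-1}) = O_\pr(\phi^q(x))$. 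Since $q>2$, this quantity is in fact $o_\pr(\phi^2(x))$.

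Combining these,
\[
\frac{1-U_1}{\phi^2(x)} = \frac{\|\mbw\|_q - W_1}{\phi^2(x)\,\|\mbw\|_q} \longrightarrow 0
\]
in probability, which together with the coordinatewise limits of the other components yields the joint weak convergence~(\ref{eq:convergence-degeneree}); independence of the degenerate second coordinate from the others is automatic since its limit is the constant $0$. The only substantive step is the $O_\pr(\phi^q(x))$ bound on $\|\mbw\|_q-W_1$, and this is precisely where the assumption $q>2$ is needed --- without it $\phi^q(x)$ and $\phi^2(x)$ would be of the same order and the cancellation in the second coordinate would fail.
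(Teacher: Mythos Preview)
Your proof is correct and follows essentially the same Slutsky argument as the paper's, which likewise reduces everything to $\|\mbw\|_q - W_1 = o_\pr(\phi^2(x))$; you in fact supply the mean-value-theorem justification of this expansion that the paper leaves implicit. The only quibble is that your argument as written covers $2<q<\infty$; for $q=\infty$ one has $\|\mbw\|_\infty = W_1$ on $\Delta_1$, whence $U_1\equiv 1$ and the second coordinate is identically zero.
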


\begin{proof}
  Define $R_x = (1 - W_1)/\phi^2(x)$. Conditionally on $\|\mbx\|_q>x$ and $\mbx\in\Delta_1$,
  $R_x\convweak R$, hence $R_x=O_P(1)$. Thus, conditionally on $\|\mbx\|_q>x$ and $\mbx\in\Delta_1$,
  \begin{align*}
    \|\mbw\|_q = 1 - \phi^2(x) R_x + o_P(\phi^2(x)) = 1 + O_P(\phi^2(x)) \; .
  \end{align*}
  This yields
  \begin{align*}
    \left( \frac{\|\mbx\|_q-x}{\psi(x)} , \frac{U_2}{\phi(x)},\dots,\frac{U_d}{\phi(x)}\right) & =
    \left( \frac{\|\mbx\|_q-x}{\psi(x)} , \frac{W_2}{\phi(x)\|\mbw_2\|}
      , \dots, \frac{W_d}{\phi(x)\|\mbw_d\|} \right) \\
    & = \left( \frac{\|\mbx\|_q-x}{\psi(x)} , \frac{W_2}{\phi(x)\{1 + O_P(\phi^2(x))\}}, \dots,
      \frac{W_d}{\phi(x)\{1 + O_P(\phi^2(x))\}}\right)    \\
    & \convweak (E,G_2,\dots,G_d) \; .
  \end{align*}
  Moreover,
  \begin{align*}
    1 - U_1 = 1- \frac{W_1}{\|\mbw\|_q} & = 1 - \frac{1 - \phi^2(x) R_x}{1 - \phi^2(x) R_x +
      o_P(\phi^2(x)) } = o_P(\phi^2(x)) \; .
  \end{align*}
This yields~(\ref{eq:convergence-degeneree}).
\end{proof}
The degeneracy with respect to the second variable in the convergence
(\ref{eq:convergence-degeneree}) is the key to the behavior of the diameter in this case. 
Let $a_n$ be the $1-1/n$ quantile of the distribution of $\|\mbx\|_q$ and $b_n = \psiT(a_n)$.
\begin{theorem}
  Let $\{\mbx_i,i\geq1\}$ be a sequence of i.i.d.~random vectors with the same distibution as $\mbx$
  which satisfies the assumptions of Theorem~\ref{theo:norm-lq-q>2}. Then, for $q\in(2,\infty]$,
  \begin{align}
     \label{eq:diamq-spherique-q>2}
     \frac{\diamq(\mbx) - 2a_n}{b_n} \convweak \; \max_{1 \leq i \leq d} (\Gamma_i^++\Gamma_i^-) \;     ,
   \end{align}
   where $\Gamma_i^+$ and $\Gamma_i^-$, $1\leq i \leq d$ are independent Gumbel random variable with
   location parameter $\log2d$.
\end{theorem}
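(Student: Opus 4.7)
The plan is to decompose the cloud into the $2d$ axial regions $\Delta_{\pm i}$, to show that the diameter is asymptotically achieved by pairs of points in diametrically opposite regions, and to identify the limit with the maximum over $i$ of the sums of the corresponding sub-maxima. Define $M_{n,i}^{\pm}=\max\{\|\mbx_j\|_q:1\le j\le n,\,\mbx_j\in\Delta_{\pm i}\}$. By the spherical symmetry of $\mbx$, the $2d$ regions $\Delta_{\pm i}$ are exchangeable, so $\pr(\|\mbx\|_q>x,\,\mbx\in\Delta_{\pm i})=\tfrac{1}{2d}\pr(\|\mbx\|_q>x)$, and therefore by \Cref{theo:norm-lq-q>2}
\begin{align*}
n\,\pr(\|\mbx\|_q>a_n+b_nz,\,\mbx\in\Delta_{\pm i}) \longrightarrow \tfrac{1}{2d}\rme^{-z}.
\end{align*}
Combined with the conditional limit of \Cref{coro:excess-U} and the splitting theorem for Poisson point processes, this gives the joint weak convergence of the $2d$ normalized sub-maxima $\bigl((M_{n,i}^{\pm}-a_n)/b_n\bigr)_{1\le i\le d,\,\pm}$ to $2d$ independent Gumbel random variables $\Gamma_i^{\pm}$ with location parameter $\log 2d$.

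I would then exploit the degeneracy in~(\ref{eq:convergence-degeneree}) to prove that any pair with $\mbx_j\in\Delta_i$, $\mbx_k\in\Delta_{-i}$ and norms $r_j,r_k=a_n+O_p(b_n)$ satisfies
\begin{align*}
\|\mbx_j-\mbx_k\|_q=r_j+r_k+o_p(b_n).
\end{align*}
Indeed \Cref{coro:excess-U} yields $U_{j,i}=1+o_p(\phi^2(a_n))$, $U_{j,l}=O_p(\phi(a_n))$ for $l\ne i$, and symmetrically for $\mbx_k$ with the opposite sign on coordinate $i$. Expanding coordinate by coordinate, the $i$-th coordinate of $\mbx_j-\mbx_k$ equals $r_j+r_k+o_p(b_n)$ while each of the other $d-1$ coordinates is $O_p(a_n\phi(a_n))$. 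Hence
\begin{align*}
\|\mbx_j-\mbx_k\|_q^q=(r_j+r_k)^q+o_p(a_n^{q-1}b_n)+O_p(a_n^q\phi^q(a_n)),
\end{align*}
and since $a_n\phi^q(a_n)/b_n=\phi^{q-2}(a_n)\to 0$ for $q>2$, taking the $q$-th root gives the claim. Applied to the sub-maximizers, this yields the lower bound $\diamq(\mbx)\ge\max_i(M_{n,i}^++M_{n,i}^-)+o_p(b_n)$.

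For the matching upper bound I would check that non-antipodal pair types are negligible after normalization. For $\mbx_j\in\Delta_i$, $\mbx_k\in\Delta_{i'}$ with $i'\ne\pm i$, the same coordinate expansion around two orthogonal axes gives
\begin{align*}
\|\mbx_j-\mbx_k\|_q\le (r_j^q+r_k^q)^{1/q}+O_p(\sqrt{a_nb_n})\le 2^{1/q}\max(r_j,r_k)+O_p(\sqrt{a_nb_n}),
\end{align*}
so $(\|\mbx_j-\mbx_k\|_q-2a_n)/b_n\le(2^{1/q}-2)a_n/b_n+O_p(\sqrt{a_n/b_n})\to-\infty$, since $2^{1/q}<2$ and $\sqrt{a_n/b_n}=o(a_n/b_n)$. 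Same-region pairs $\mbx_j,\mbx_k\in\Delta_{\pm i}$ are even easier: both vectors cluster near the same $\pm e_i$, so $\|\mbx_j-\mbx_k\|_q=O_p(b_n)+O_p(\sqrt{a_nb_n})=o_p(a_n)$. The main technical obstacle is to make the $o_p(b_n)$ correction in the antipodal expansion uniform over the random set of candidate-diameter pairs; the standard remedy is truncation. Given $A>0$, the Poisson convergence of step one shows the set of points with $\|\mbx_j\|_q\ge a_n-Ab_n$ has cardinality $O_p(1)$, so the pointwise expansion extends uniformly to pairs drawn from this set; the remaining pairs have at least one norm below $a_n-Ab_n$, whence by the triangle inequality $\|\mbx_j-\mbx_k\|_q\le\|\mbx_j\|_q+\|\mbx_k\|_q\le M_n^q+a_n-Ab_n$, and $(\|\mbx_j-\mbx_k\|_q-2a_n)/b_n\le -A+O_p(1)$. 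Letting $A\to\infty$ after the limit in $n$ closes the argument, and a continuous mapping yields the weak convergence of $(\diamq(\mbx)-2a_n)/b_n$ to $\max_{1\le i\le d}(\Gamma_i^++\Gamma_i^-)$.
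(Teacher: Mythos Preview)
Your proof is correct and follows essentially the same route as the paper: split the cloud into the $2d$ axial sectors $\Delta_{\pm i}$, invoke the degeneracy in Corollary~\ref{coro:excess-U} to show that antipodal pairs satisfy $\|\mbx_j-\mbx_k\|_q=r_j+r_k+o_p(b_n)$, and sandwich the normalized diameter between quantities that both converge to $\max_i(\Gamma_i^++\Gamma_i^-)$.

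The one noteworthy difference is that you are more explicit than the paper about the reduction to antipodal pairs. The paper simply asserts that ``with probability tending to one, the diameter will be achieved by a pair of points in two symmetric regions $\Delta_i$ and $\Delta_{-i}$'' and then uses the triangle inequality for the upper bound; you actually justify this by treating non-antipodal pairs (different axes) and same-region pairs separately and showing their normalized contributions diverge to $-\infty$, and you close the uniformity gap via the truncation at level $a_n-Ab_n$ followed by $A\to\infty$. Your expansion for antipodal pairs and the paper's are equivalent; the paper applies it only to the sub-maximizers $\mbx_{n,j}^\pm$, whereas you state it for all antipodal pairs above the threshold, but this makes no difference since either way the antipodal contribution is exactly $M_{n,i}^++M_{n,i}^-+o_p(b_n)$.
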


\begin{proof}
  With probability tending to one, the diameter will be achieved by a pair of points in two
  symmetric regions $\Delta_i$ and $\Delta_{-i}$.

  For $j=1,\dots,d$, define the points
  \begin{align*}
    P_{n,i,j}^+ = \left( \tfrac{\|\mbx_i\|_q-a_n}{b_n}, \tfrac{\mbu_i-(0,\dots,1,\dots,0)}{c_n}
    \right) \; , P_{n,i,j}^- = \left( \tfrac{\|\mbx_i\|_q-a_n}{b_n},
      \tfrac{\mbu_i-(0,\dots,-1,\dots,0)}{c_n} \right)
  \end{align*}
  (where the $\pm1$ is on the $j$-th position) and the point processes $N_{n,j}^\pm =
  \sum_{i=1}^\infty \delta_{P_{n,i,j}^\pm} \1{\{\mbx_i\in\Delta_{\pm j}\}}$.
  Corollary~\ref{coro:excess-U} yields the point process convergence
  $\{N_{n,j}^+,N_{n,j}^-,j=1,\dots,d\} \convweak \{N_j^+,N_j^-,j=1,\dots,d\}$ with
  $N_j^\pm=\sum_{i=1}^\infty \delta_{P_i^\pm}$ and
  \begin{align*}
    P_i^\pm = (\Gamma_{i,j}^\pm,0,G_{i,j,2}^\pm,\dots,G_{i,j,d}^\pm) \; ,
  \end{align*}
  where $N_j^+$, $N_j^-$ are independent Poisson point processes,
  $\{\Gamma_{i,j}^\pm,i\geq1,j=1,\dots,d\}$ are the points of a Poisson point process with mean
  measure $\frac1{2d}\rme^{-x}\,\rmd x$, independent of the i.i.d.~standard Gaussian vectors
  $(G_{i,j,2}^\pm,\dots,G_{i,j,d}^\pm)$, $i\geq1$, $j=1,\dots,d$.

  For $j=1,\dots,d$, let $\mbx_{n,j}^+$ and $\mbx_{n,j}^-$ be the vectors of the sample with the
  largest norms in $\Delta_j$ and $\Delta_{-j}$, respectively. With probability tending to one, it
  holds that
  \begin{align*}
    \max_{1\leq j\leq d} \frac{ \|\mbx_{n,j}^+ - \mbx_{n,j}^-\|_q-2a_n}{b_n} \leq \frac{\diamq(\mbx)
      -2a_n}{b_n} \leq \max_{1\leq j\leq d} \left\{ \frac{ \|\mbx_{n,j}^+\|_q -a_n}{b_n} +
      \frac{\|\mbx_{n,j}^-\|_q-a_n}{b_n} \right\} \; .
  \end{align*}

  For each $j=1,\dots,d$, the sum of the rightmost terms inside the $\max$ converges weakly to
  $\Gamma_{j,1}^++\Gamma_{j,1}^-$.  Define $Z_{n,j}^\pm = (\|\mbx_{n,j}^\pm\|_q -a_n)/b_n$ and
  $\mbg_{n,j}^\pm = \left(\mbx_{n,j}^\pm/\|\mbx_{n,j}^\pm\|_q - (0,\dots,\pm1,\dots,0)\right)/c_n$. The point
  process convergence entails the following one:
  \begin{align*}
    (Z_{n,j}^+,Z_{n,j}^-,\mbg_{n,j}^+,\mbg_{n,j}^-) \convweak (\Gamma_1^+,\Gamma_1^-,
    0,\mbg_j^+,0,\mbg_j^-) \; , 
  \end{align*}
  where all components are independent and the components of $\mbg_j^\pm$ are standard Gaussian
  $(d-1)$ dimensional Gaussian vectors.  By Corollary~\ref{coro:excess-U},
  $c_n\mbg_{n,j,1}^\pm=o(c_n^2)$, thus,
  \begin{align*}
    \|\mbx_{n,j}^+ - \mbx_{n,j}^-\|_q & = a_n\left[ \left\{ (1+c_n^2Z_{n,j}^+)(1+c_n\mbg_{n,j,1}^+)+
        (1+c_n^2Z_{n,j}^-)(1-c_n\mbg_{n,j,1}^-)\right\}^q + O(c_n^q)\right] \\
    & = 2a_n \{1+ \frac12 c_n^2(Z_{n,j}^++Z_{n,j}^-) + o(c_n^2)\} =  2a_n + b_n(Z_{n,j}^++Z_{n,j}^-) + o(b_n) \; .
  \end{align*}
  This proves that $(\|\mbx_{n,j}^+ - \mbx_{n,j}^-\|_q -2a_n)/b_n\convweak
  \Gamma_{j,1}^++\Gamma_{j,1}^-$. This yields~(\ref{eq:diamq-spherique-q>2}).
\end{proof}

\subsection{Case $1 \leq q < 2$}
Let $\Rset^d$ be split into $2^d$ isometric regions $Q_j$, $\pm j=1,\dots,2^{d-1}$ around each
``diagonal'' line $x_1=\pm x_2=\cdots=\pm x_d$, numbered in such a way that $Q_j=-Q_{-j}$ and that
$Q_1$ is the region which contains the point $\mathbf1=(1,\dots,1)$. For $q\in[1,2)$, a spherical
vector with a large $l^q$ norm must be close to one of the diagonals.

Define, $\psi_q(x) = d^{1/q-1/2} \psiT(d^{1/2-1/q}x)$ and $\phi_q(x) = \sqrt{\psi_q(x)/x}$.
\begin{theorem}
  \label{theo:lq-spherique-q<2} 
  Let $\mbx$ be as in Theorem~\ref{theo:norm-lq-q>2}.  If $1 \leq q < 2$, then
  \begin{align}
     \label{eq:normq-spherique-q<2}
     \pr(\|\mbx\|_q>x) \sim \frac{2^{3(d-1)/2} \Gamma(d/2)} {\Gamma(1/2) (2-q)^{(d-1)/2}} \;
     \phi_q^{d-1} (x) \pr \left(T>xd^{\frac12-\frac1q} \right) \; .
  \end{align}
  Moreover, conditionally on $\|\mbx\|_q>x$ and $\mbx\in Q_1$, as $x\to\infty$,
  \begin{align}
    \label{eq:convergence-conditionnelle-q<2}
    \left( \frac{\|\mbx\|_q-x}{\psi_q(x)} , \frac{\mbw-d^{-1/2}\mathbf1}{\phi_q(x)} \right)
    \convweak (E,\mbg) \; ,
  \end{align}
  where $E$ is an exponential random variable with mean 1 and $\mbg$ is a Gaussian vector
  independent of $E$ with covariance matrix
  \begin{align*}
    \Sigma = \frac1{d(2-q)} \begin{pmatrix}
      d-1 & -1 & \dots & -1 \\
      -1 & d-1 & \dots & -1 \\
      \vdots & & & \vdots \\
      -1 & \dots & -1 & d-1
    \end{pmatrix} \;
  \end{align*}
\end{theorem}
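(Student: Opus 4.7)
The plan is to mimic the proof of Theorem~\ref{theo:norm-lq-q>2}, with the maxima of $\|\cdot\|_q$ on $\sphere{d-1}$ now being the $2^d$ diagonal points $\pm d^{-1/2}\mathbf{1}$. By the sign and permutation invariance of both $\|\cdot\|_q$ and the law of $\mbw$, the tail on each region $Q_j$ gives the same contribution, so it suffices to treat $Q_1$ and multiply by $2^d$. Fix $w_0 = d^{-1/2}\mathbf{1}$ and write $\mbw = w_0 + v$ with $\mbw\in\sphere{d-1}\cap Q_1$. The constraint $\|\mbw\|=1$ reads $d^{-1/2}\sum_i v_i = -\|v\|^2/2$, so that to leading order $v$ lies in the tangent hyperplane $\mathbf{1}^\perp$.

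The key computation is a second-order Taylor expansion of $(d^{-1/2}+v_i)^q$ combined with this constraint, which cancels the linear term and yields
\begin{align*}
\|\mbw\|_q^q = d^{1-q/2}\left(1 - \tfrac{q(2-q)}{2}\|v\|^2 + O(\|v\|^3)\right),
\end{align*}
hence $\|\mbw\|_q = d^{1/q-1/2}\{1 - \tfrac{2-q}{2}\|v\|^2 + O(\|v\|^3)\}$. Setting $y = x\,d^{1/2-1/q}$, this gives $x/\|\mbw\|_q = y\,(1 + \tfrac{2-q}{2}\|v\|^2 + O(\|v\|^3))$. The Gumbel MDA property~(\ref{eq:gumbel-DA-psi}) for $T$, together with the relations $\psi_q(x) = d^{1/q-1/2}\psiT(y)$ and $\phi_q^2(x) = \psiT(y)/y$, then produces, under the change of variables $v = \phi_q(x)\,u$ with $u\in\mathbf{1}^\perp$, the locally uniform limit
\begin{align*}
\frac{\pr\left(T > (x + \psi_q(x)z)/\|\mbw\|_q \mid \mbw = w_0+v\right)}{\pr(T > y)} \longrightarrow \exp\left(-z - \tfrac{2-q}{2}\|u\|^2\right).
\end{align*}

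Parametrizing the spherical measure near $w_0$ by Lebesgue measure on $\mathbf{1}^\perp$ (the Jacobian is $1 + O(\phi_q^2(x)\|u\|^2)$), performing this change of variables, and integrating against a compactly-supported test function $f((\|\mbx\|_q-x)/\psi_q(x), (\mbw-w_0)/\phi_q(x))$ produces a factor $\phi_q^{d-1}(x)/\omega_d$ (with $\omega_d = 2\pi^{d/2}/\Gamma(d/2)$) times $\pr(T>y)$ times
\begin{align*}
\int_{\mathbf{1}^\perp} f(z,u)\,\rme^{-z}\,\rme^{-\frac{2-q}{2}\|u\|^2}\,\rmd z\,\rmd u.
\end{align*}
Taking $f$ to be the indicator of the tail event and using $\int_{\mathbf{1}^\perp} \rme^{-\frac{2-q}{2}\|u\|^2}\rmd u = (2\pi/(2-q))^{(d-1)/2}$, then multiplying by the $2^d$ regions, yields~(\ref{eq:normq-spherique-q<2}) after collecting the constants $2^d(2\pi)^{(d-1)/2}/(2\pi^{d/2}) = 2^{3(d-1)/2}/\Gamma(1/2)$. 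For~(\ref{eq:convergence-conditionnelle-q<2}), the same computation with a general $f$ identifies the limit of $(\mbw-w_0)/\phi_q(x)$ as the Gaussian measure on $\mathbf{1}^\perp$ with density proportional to $\exp(-\tfrac{2-q}{2}\|u\|^2)$, i.e.\ the $\Rset^d$-valued Gaussian $\mbg$ supported on $\mathbf{1}^\perp$ with covariance $(2-q)^{-1}(I - d^{-1}\mathbf{1}\mathbf{1}')$, which is exactly~$\Sigma$. The main technical hurdle is to extend the locally uniform convergence from compactly supported to bounded continuous test functions and to dominate the contribution of $\mbw$ that are not near any diagonal; this is handled exactly as at the end of the proof of Theorem~\ref{theo:norm-ddim}, via the uniform bound~(\ref{eq:borne-uniforme-avec-z}) applied with $u = \tfrac{2-q}{2}\|v\|^2$ and $p$ large enough that $(1+\tfrac{2-q}{2}\|u\|^2)^{-p}$ is integrable on $\mathbf{1}^\perp$.
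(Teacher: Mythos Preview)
Your proposal is correct and follows the same underlying strategy as the paper: localize near the maximizer $w_0=d^{-1/2}\mathbf{1}$ of $\|\cdot\|_q$ on $\sphere{d-1}$, Taylor-expand $\|\mbw\|_q$ to second order, use the sphere constraint to kill the linear term, and recognize the resulting quadratic as producing a Gaussian on the tangent hyperplane. The constants and the identification $\Sigma=(2-q)^{-1}(I-d^{-1}\mathbf{1}\mathbf{1}')$ are right.

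The execution differs from the paper's in one respect worth noting. The paper first applies an orthogonal matrix $P$ sending $w_0$ to $(1,0,\dots,0)$, then invokes the spherical factorization $\mbw=(u,\sqrt{1-u^2}\,\tilde\mbw)$ with $\tilde\mbw$ uniform on $\sphere{d-2}$ (Lemma~\ref{lem:conditioning-uniform-sphere}), integrates over $u$ against the explicit density $\beta_d(1-u^2)^{(d-3)/2}$, and only at the end rotates back to identify the limit as $\mbg$ with covariance $\Sigma$. You instead parametrize the spherical measure directly by Lebesgue measure on the tangent space $\mathbf{1}^\perp$ at $w_0$, which is more intrinsic and avoids the auxiliary rotation and the $\chi^2_{d-1}$ detour. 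The paper's route buys uniformity with the rest of the argument (it reuses Lemma~\ref{lem:conditioning-uniform-sphere} and mirrors the proof of Theorem~\ref{theo:norm-lq-q>2}); yours is shorter and makes the form of $\Sigma$ as a scalar multiple of the projector onto $\mathbf{1}^\perp$ transparent from the outset. One small point to make precise in a full write-up: your $v=\mbw-w_0$ is not exactly in $\mathbf{1}^\perp$, only its leading-order part is; the clean statement is to parametrize by the orthogonal projection $v_\perp$ onto $\mathbf{1}^\perp$, for which the surface element is $(1-\|v_\perp\|^2)^{-1/2}\,\rmd v_\perp$, matching your claimed Jacobian $1+O(\phi_q^2(x)\|u\|^2)$.
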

\paragraph{Comments}
\begin{itemize}
\item The form of the covariance matrix implies that the components of the vectors $\mbg$ sum up to
  zero. This is natural since $\mbg$ must be in the space tangent to the sphere at the point
  $d^{-1/2}\mathbf1$.
\item Here again, if $T^2$ has a $\chi^2$ distribution with $d$ degrees of freedom, the
  equivalent~(\ref{eq:normq-spherique-q<2}) is a particular case of
  \cite[Theorem~1 and Example~1]{hashorva:korshunov:piterbarg:2013}:
\begin{align*}
  \pr(\|\mbx\|_q^q>x) \sim \frac{2^d d^{\frac1q-\frac12}}{\sqrt{2\pi}(2-q)^{(d-1)/2}} x^{-1/q} \rme^{-\frac12 d^{1-2/q}x^{2/q}} \; .
\end{align*}
\end{itemize}

\begin{proof}[Proof of Theorem~\ref{theo:lq-spherique-q<2}]
  Let $P$ be an orthogonal matrix such that
  $d^{-1/2} P \mathbf1 = (1,0,\dots,0)'$ and define $g=f\circ P^{-1}$.  Note that $P\mbw$ is
  uniformly distributed on $\sphere{d-1}$, i.e.~has the same distribution as $\mbw$. For $f$
  continuous and compactly supported on $\Rset^d$, we have
\begin{align*}
  \esp \Big[ f  \Big(& \tfrac{\mbw-d^{-1/2}\mathbf1} {\phi_q(x)} \Big) \1{\{T >
    \frac{x+\psi_q(x)z} {\|\mbw\|_q}\}} \1{\{\mbw\in Q_1\}} \Big] \\
  & = \esp \left[ g \left(\tfrac{\mbw-(1,0,\dots,0)\}}{\phi_q(x)}\right) \1{\{T >
      \frac{x+\psi_q(x)z} {\|P^{-1}\mbw\|_q}\}}  \1{\{\mbw\in PQ_1\}} \right] \\
  & = \frac1{\beta_d} \int_{0}^1 \esp \left[ g \left(\tfrac{u-1}{\phi_q(x)}, \tfrac{1-u ^2}{\phi_q(x)}
      \tilde\mbw\right) \1{\{T > \frac{x+\psi_q(x)z} {\|P^{-1}(u,\sqrt{1-u^2}\tilde\mbw)\|_q}\}}
  \right] (1-u^2)^{(d-3)/2} \, \rmd u \\
  & = \frac1{\beta_d} \phi_q^{d-1}(x) \int_{0}^{1/\phi_q(x)} \esp \left[ g \left(-\phi_q(x) v,
      \sqrt{2v-\phi_q^2(x)v^2}      \tilde\mbw\right) \right. \\
  & \hspace*{2cm} \left.\1{\{T > \frac{x+\psi_q(x)z} {\|P^{-1}(1-\phi_q^2(x)v,\phi_q(x)
        \sqrt{2v-\phi_q^2(x)v^2}\tilde\mbw)\|_q}\}} \right] (2v-\phi_q^2(x)v^2)^{(d-3)/2} \, \rmd v  \; .
\end{align*}

Denote $\tilde \mbu = P^{-1}(0,\tilde\mbw)$. Then $\tilde\mbu \in\sphere{d-1}$ and moreover, 
\begin{align*}
  \langle \tilde\mbu, \mathbf1 \rangle = \langle P^{-1}(0,\tilde\mbw),\mathbf1 \rangle = \langle
  (0,\mbw) , P\mathbf1 \rangle = \langle
  (0,\mbw) , d^{1/2}(1,0,\dots,0) \rangle = 0 \; .
\end{align*}
In view of this, a second order Taylor expansion yields
\begin{align*}
  \| P^{-1} (1&-\phi_q^2(x)v,\phi_q(x) \sqrt{2v-\phi_q^2(x)v^2}\tilde\mbw)\|_q \\
  & = d^{\frac1q-\frac12} \left\{ 1 - d^{-1/2} \phi_q(x) \sqrt{2v} \sum_{i=1}^d \tilde\mbu_i +
    \phi_q^2(x) v\left((q-1) \|\tilde\mbu\|^2-1\right) + o_P(\phi_q^2(x)) \right\}  \\
  & = d^{\frac1q-\frac12} \left\{ 1 - \phi_q^2(x) (2-q) v + o_P(\phi_q^2(x)) \right\} \; .
\end{align*}
This yields, for $f$ continuous and compactly supported, 
\begin{align*}  
  \lim_{x\to\infty} \frac {\phi_q^{1-d}(x)} {\pr(T>xd^{\frac12-\frac1q})} & \esp \Big[ f \Big(
  \tfrac{\mbw-d^{-1/2}\mathbf1} {\phi_q(x)} \Big) \1{\{T > \frac{x+\psi_q(x)z} {\|\mbw\|_q}\}} \1{\{\mbw\in Q_1\}}  \Big]  \\
  & = \frac1{\beta_d} \, \rme^{-z} \int_{0}^\infty \esp[g(0,\sqrt{2v}\tilde\mbw)] (2v)^{(d-3)/2}
  \rme^{-(2-q)v} \, \rmd v \\
  & = \frac1{2(2-q)^{(d-1)/2}\beta_d} \, \rme^{-z} \int_{0}^\infty
  \esp[g(0,\sqrt{\tfrac{w}{2-q}}\tilde\mbw)] w^{(d-3)/2} \rme^{-w/2} \, \rmd w    \\
  & = \frac{2^{(d-3)/2} \Gamma((d-1)/2)}{(2-q)^{(d-1)/2}\beta_d} \, \rme^{-z}
  \esp[g(0,(2-q)^{-1/2} R\tilde\mbw)]    \\
  & = \frac{2^{(d-3)/2} \Gamma((d-1)/2)}{(2-q)^{(d-1)/2}\beta_d} \, \rme^{-z}
  \esp[f((2-q)^{-1/2} P^{-1}(0,R\tilde\mbw)]  \\
  & = \frac{2^{(d-3)/2} \Gamma((d-1)/2)}{(2-q)^{(d-1)/2}\beta_d} \, \rme^{-z}
  \esp[f((2-q)^{-1/2} R\tilde\mbu)] \; ,
\end{align*}
where $R^2$ has a $\chi^2$ distribution with ${d-1}$ degrees of freedom and is independent of
$\tilde\mbw$. Thus $R\tilde\mbw$ is a $(d-1)$ dimensional standard Gaussian vector. This implies
that $(2-q)^{-1/2}R\tilde\mbu$ is a $d$ dimensional Gaussian vector with covariance matrix
\begin{align*}
  \frac1{2-q} P^{-1} \begin{pmatrix} 0 & & \cdots & 0 \\ 0 & 1 & \cdots & 0 \\ \vdots & & \ddots &
    \vdots \\ 0 & \cdots & 0 & 1 \end{pmatrix} P = \frac1{d(2-q)} \begin{pmatrix}
    d-1 & -1 & \dots & -1 \\
    -1 & d-1 & \dots & -1 \\
    \vdots & & & \vdots \\
    -1 & \dots & -1 & d-1
  \end{pmatrix} = \Sigma \; .
\end{align*}
This also implies that the components of $R\tilde\mbu$ sum up to zero.  Summarizing, we have proved
that, for $f$ continuous and compactly supported
\begin{multline*}  
  \lim_{x\to\infty} \frac {\phi_q^{1-d}(x)} {\pr(T>xd^{\frac12-\frac1q})} \esp \Big[ f \Big(
  \tfrac{\mbw-d^{-1/2}\mathbf1} {\phi_q(x)} \Big) \1{\{T > \frac{x+\psi_q(x)z} {\|\mbw\|_q}\}}
  \1{\{\mbw\in Q_1\}} \Big]  \\
  = \frac{2^{(d-3)/2} \Gamma(d/2) }{\Gamma(1/2)(2-q)^{(d-1)/2} }  \, \rme^{-z}
  \esp[f(\mbg)] \; ,
\end{multline*}
where $\mbg$ is a Gaussian vector with mean zero and covariance matrix $\Sigma$.  Again, the
extension of the convergence to bounded continuous functions is done as in the proof of
Theorem~\ref{theo:norm-ddim}, using the bound~(\ref{eq:borne-uniforme-avec-z}). This
proves~(\ref{eq:convergence-conditionnelle-q<2}). Summing this equivalence over the $2^d$ regions
$Q_j$ yields~(\ref{eq:normq-spherique-q<2}).
\end{proof}

% Define 
% \begin{align*}
%   \mbu = \frac {\mbx} {\|\mbx\|_q} = \frac {\mbw} {\|\mbw\|_q} \; .
% \end{align*}
Let $\mbu$ be as in~(\ref{eq:def-mbu}).  Theorem~\ref{theo:lq-spherique-q<2} yields that conditionally
on $\|\mbx\|_q>x$ and $\mbu\in Q_1$,
\begin{align}
  \label{eq:exces-condi-q<2}
  \left( \frac{\|\mbx\|_q-x}{\psi_q(x)} \; , \; \frac{\mbu - d^{-1/q} \mathbf1}{\phi_q(x)} \right)
  \convweak (E, d^{1/2-1/q} \mbg) \; .
\end{align}

Theorem~\ref{theo:lq-spherique-q<2} and the convergence~(\ref{eq:exces-condi-q<2}) can be adapted to
each region $Q_j$. For $j=1,\dots,2^d$, let $\beps_j$ be the point of
$\{-1,1\}^d\setminus\{\mathbf1\}$ which is in $Q_j$. Then, conditionally on $\|\mbx\|_q>x$ and
$\mbu\in Q_j$,
\begin{align*}
  \left( \frac{\|\mbx\|_q-x}{\psi_q(x)} \; , \; \frac{\mbu - d^{-1/q} \beps_j}{\phi_q(x)} \right)
  \convweak (E, d^{1/2-1/q} \mbg_j) \; ,
\end{align*}
where $\mbg_j = (\varepsilon_1G_1,\dots,\varepsilon_dG_d)$ and $(G_1,\dots,G_d)$ is a Gaussian
vector with zero mean and covariance matrix $\Sigma$.

The previous results can be translated into point process convergence. Let $a_n$ be the $1-1/n$
quantile of the distribution of $\|\mbx\|_q$. Define $b_n = \psi_q(a_n)$ and $c_n = \sqrt{b_n/a_n}$.
For $j=1,\dots,2^d$ and $i=1,\dots,n$, define
\begin{align*}
  P_{n,i,j} = \left(\tfrac{\|\mbx_i\|_q-a_n}{b_n},\tfrac{\mbu_{i}-d^{-1/q}\beps_j}{c_n}\right) \; .
\end{align*}

\begin{corollary}  
  \label{coro:ppconv-q<2}
  Let $\{\mbx_i,i\geq1\}$ be a sequence of i.i.d.~random vectors with the same distibution as $\mbx$
  which satisfies the assumptions of Theorem~\ref{theo:norm-lq-q>2}.  Then,
  \begin{align*}
    \sum_{i=1}^n \delta_{P_{n,i,j}} \1{\{\mbu_i\in Q_j\}} \convweak \sum_{i=1}^\infty
    \delta_{P_{i,j}}
  \end{align*}
  where $P_{i,j} = (\Gamma_{i,j}, d^{1/2-1/q} \mbg_i)$, $\sum_{i=1}^\infty \delta_{\Gamma_{i,j}}$
  are independent Poisson point processes with mean measure $2^{-d}\rme^{-x} \rmd x$ and
  $\{\mbg_{i,j},i\geq1\}$, $j=1,\dots,2^d$ are independent sequences of i.i.d.~Gaussian vectors with
  the same distribution as $\mbg_j$, independent of $\{\Gamma_{i,j},i\geq1\}$, $j=1\dots,2^d$.
\end{corollary}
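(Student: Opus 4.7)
The plan is to deduce the joint point-process convergence from the conditional limit (\ref{eq:exces-condi-q<2}) via Kallenberg's criterion (\cite[Proposition~3.22]{resnick:1987}). Because the $\mbx_i$ are i.i.d., each process $N_n^{(j)} := \sum_{i=1}^n \delta_{P_{n,i,j}} \1{\{\mbu_i \in Q_j\}}$ is a sum of i.i.d.\ ``defective'' point masses, so Kallenberg's theorem reduces essentially to convergence of the intensity measure on a basis of relatively compact sets. Since the regions $Q_j$, $j\in\{\pm1,\dots,\pm 2^{d-1}\}$, are disjoint (and each $\mbu_i$ lies in the interior of some $Q_j$ almost surely), it is natural to work with the marked point process $\tilde{N}_n = \sum_{i=1}^n \delta_{(\xi_{n,i},J_i)}$ on $\Rset\times\Rset^d\times\{\pm1,\dots,\pm 2^{d-1}\}$, where $J_i$ is the index such that $\mbu_i\in Q_{J_i}$ and $\xi_{n,i} = (\tfrac{\|\mbx_i\|_q-a_n}{b_n},\tfrac{\mbu_i-d^{-1/q}\beps_{J_i}}{c_n})$. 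Independence of the limiting component processes is then a free consequence of the standard fact that restrictions of a Poisson process to disjoint parts of its state space are independent.

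First I would verify intensity convergence. For a rectangle $B=(z,\infty)\times B'$ with $B'\subset\Rset^d$ relatively compact and satisfying $\pr(d^{1/2-1/q}\mbg_j\in\partial B')=0$, factor
\begin{align*}
n\,\pr\big(P_{n,i,j}\in B,\,\mbu_i\in Q_j\big)
&= n\,\pr(\|\mbx\|_q>a_n+b_nz,\,\mbu\in Q_j) \\
&\quad \times \pr\Big(\tfrac{\mbu-d^{-1/q}\beps_j}{c_n}\in B'\,\Big|\,\|\mbx\|_q>a_n+b_nz,\,\mbu\in Q_j\Big).
\end{align*}
By the invariance of the uniform distribution on $\sphere{d-1}$ under coordinate sign flips and permutations, $\pr(\mbu\in Q_j\mid\|\mbx\|_q>t)=2^{-d}$ for every $t>0$ and every $j$. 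Together with $n\,\pr(\|\mbx\|_q>a_n+b_nz)\to \rme^{-z}$ (the definition of $a_n$ combined with (\ref{eq:normq-spherique-q<2})), this gives $2^{-d}\rme^{-z}$ for the unconditional factor. The conditional factor converges to $\pr(d^{1/2-1/q}\mbg_j\in B')$ by the analog of (\ref{eq:exces-condi-q<2}) on each $Q_j$, which is exactly the display stated two paragraphs above the corollary.

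Having verified intensity convergence on the product semi-ring, Kallenberg's theorem yields that $\tilde{N}_n$ converges weakly to a Poisson process on $\Rset\times\Rset^d\times\{\pm1,\dots,\pm 2^{d-1}\}$ with intensity $\sum_{j}2^{-d}\rme^{-x}\,\rmd x\otimes\mathcal{L}(d^{1/2-1/q}\mbg_j)\otimes\delta_j$. Projecting onto each mark value $j$ recovers $N_n^{(j)}$ with the Poisson intensity $2^{-d}\rme^{-x}\,\rmd x\otimes\mathcal{L}(d^{1/2-1/q}\mbg_j)$, and restrictions of the limit to distinct marks are independent. This matches the claimed description, since a Poisson process on $\Rset$ with intensity $2^{-d}\rme^{-x}\,\rmd x$ has points $\{\Gamma_{i,j},i\geq1\}$ and the independence across $j$ is built in.

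The only real technicality is upgrading the conditional convergence from bounded continuous test functions (as established in the proof of Theorem~\ref{theo:lq-spherique-q<2}) to indicators of relatively compact continuity sets, i.e.\ tightness of the conditional laws of $(\mbu-d^{-1/q}\beps_j)/c_n$. This will be handled by the uniform tail bound (\ref{eq:borne-uniforme-avec-z}) applied to $T$, exactly as in the extension step at the end of the proofs of Theorems~\ref{theo:norm-ddim} and~\ref{theo:norm-lq-q>2}; since the Gaussian limit law $\mbg_j$ has no atoms, intensity convergence extends to all continuity rectangles, which form a convergence-determining class for the Poisson limit. No new ideas beyond those already used in the paper are required.
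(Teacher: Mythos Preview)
Your proposal is correct and follows the same route the paper takes for the analogous result: the paper does not give a separate proof of this corollary but treats it as a direct rephrasing of the conditional limit (\ref{eq:exces-condi-q<2}) in point-process language, exactly as it did for Corollary~\ref{coro:ppconv} via \cite[Proposition~3.21]{resnick:1987}. Your use of Kallenberg's criterion (Proposition~3.22 in the same reference) is an equivalent formulation for i.i.d.\ summands, and your marked-process device to obtain the joint independence across the regions $Q_j$ is a clean way to make explicit what the paper leaves implicit.
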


These point process convergences yield the asymptotic behavior of the diameter.

\begin{theorem}
  \label{theo:diametre-lq-spherique-q<2}
   Let $\{\mbx_i,i\geq1\}$ be a sequence of i.i.d.~random vectors with the same distibution as
    $\mbx$ which satisfies the assumptions of Theorem~\ref{theo:norm-lq-q>2}.  If $1 \leq q < 2$, then, 
  \begin{align}
     \label{eq:diamq-spherique-q<2}
     \frac{\diamq(\mbx) - 2a_n}{b_n} \convweak \; \max_{1 \leq j \leq 2^{d-1}} \max_{i,i'\geq1}
     \left\{\Gamma_{i,j}^++\Gamma_{i',j}^- - \frac{q-1}4 \sum_{\ell=1}^d (G_{i,j,\ell}^+ +
       G_{i',j,\ell}^-)^2 \right\}  \; ,
   \end{align}
   where $\Gamma_{i,j}^\pm$, $i\geq1$ $j=1,\dots,2^{d-1}$ are the points of independent Poisson
   point processes on $(-\infty,\infty]$ with mean measure $2^{-d} \rme^{-x} \, \rmd x$ and
   $\mbg_{i,j}^\pm=(G_{i,j,1}^\pm,\dots,G_{i,j,d}^\pm)$, $i\geq1$, $j=1,\dots,2^{d-1}$ are
   i.i.d.~Gaussian vectors with covariance matrix $\Sigma$
\end{theorem}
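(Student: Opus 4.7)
My plan is to adapt the argument used in the proof of Theorem~\ref{theo:diam-k=1} to the present setting, with the two half-spaces $\{\Theta_1>0\}, \{\Theta_1<0\}$ there replaced by the $2^{d-1}$ antipodal pairs of regions $(Q_j, Q_{-j})$. The three key ingredients will be Corollary~\ref{coro:ppconv-q<2} (point process convergence), a careful two-term Taylor expansion of $\|\mbx_i - \mbx_{i'}\|_q^q$, and a Skorohod / continuous-mapping argument identical in spirit to the one used for $k=1$.

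For each $j \in \{1,\dots,2^{d-1}\}$ let $\mbx_n^{+,j}$ (resp.\ $\mbx_n^{-,j}$) be the point of the sample with largest $l^q$-norm lying in $Q_j$ (resp.\ $Q_{-j}$). As in the derivation of~(\ref{eq:encadrement-diametre}) one has
\begin{equation*}
\max_{1 \le j \le 2^{d-1}} \|\mbx_n^{+,j} - \mbx_n^{-,j}\|_q \;\le\; \diamq(\mbx) \;\le\; \max_{1 \le j \le 2^{d-1}} \bigl(\|\mbx_n^{+,j}\|_q + \|\mbx_n^{-,j}\|_q\bigr),
\end{equation*}
and Corollary~\ref{coro:ppconv-q<2} shows that with probability tending to one the diameter is attained by a pair of points lying in antipodal regions $Q_j, Q_{-j}$, both with norms of the form $a_n + O_P(b_n)$. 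The problem reduces to computing the normalised distance $(\|\mbx_i - \mbx_{i'}\|_q - 2a_n)/b_n$ in the limit for such pairs.

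Write $\mbx_i = R_i \mbu_i$ with $\mbu_i = d^{-1/q}\beps_j + c_n \mathbf{v}_i$ and $\mbx_{i'} = R_{i'} \mbu_{i'}$ with $\mbu_{i'} = -d^{-1/q}\beps_j + c_n \mathbf{v}_{i'}$ (using $\beps_{-j} = -\beps_j$). Since $\beps_{j,\ell}^2 = 1$, the $\ell$-th coordinate satisfies
\begin{equation*}
|(\mbx_i - \mbx_{i'})_\ell| = (R_i + R_{i'}) d^{-1/q} + c_n \beps_{j,\ell} (R_i v_{i,\ell} - R_{i'} v_{i',\ell})
\end{equation*}
for $n$ large. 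Expanding $|(\cdot)|^q$ to second order and summing over $\ell$ produces a linear and a quadratic contribution. The decisive point is that the naive linear term is of order $\sqrt{a_n b_n}\gg b_n$ and a priori non-negligible, but it is \emph{exactly} cancelled at leading order by part of the quadratic term once the $l^q$-sphere constraints
\begin{equation*}
\sum_{\ell} \beps_{j,\ell} v_{i,\ell} = -\tfrac{q-1}{2}\, c_n\, d^{1/q} \sum_\ell v_{i,\ell}^2 + O(c_n^2),
\end{equation*}
deduced from $\|\mbu_i\|_q = 1$ (and its analogue for $\mbu_{i'}$) are substituted in. After the cancellation the combined correction simplifies to $-\tfrac{q(q-1)}{2}(R_i+R_{i'})^{q-2}d^{(2-q)/q}c_n^2 R_iR_{i'}\sum_\ell(v_{i,\ell}+v_{i',\ell})^2$, and taking the $q$-th root while using $R_i, R_{i'}=a_n+O_P(b_n)$ and $c_n^2 a_n = b_n$ yields
\begin{equation*}
\frac{\|\mbx_i - \mbx_{i'}\|_q - 2a_n}{b_n} = \frac{R_i - a_n}{b_n} + \frac{R_{i'} - a_n}{b_n} - \tfrac{q-1}{4}\, d^{(2-q)/q} \sum_{\ell=1}^d (v_{i,\ell} + v_{i',\ell})^2 + o_P(1).
\end{equation*}

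The remainder of the proof is then a direct adaptation of the end of the proof of Theorem~\ref{theo:diam-k=1}. By Corollary~\ref{coro:ppconv-q<2} and Skorohod's representation theorem we may assume almost-sure convergence of the finite point processes in each antipodal pair $(Q_j, Q_{-j})$; the set of pairs $(i,i')$ contending for the max is a.s.\ finite, so a continuous mapping argument passes to the limit. The scaling $d^{1/2-1/q}$ of $\mathbf{v}_i + \mathbf{v}_{i'}$ toward $d^{1/2-1/q}(\mbg_j + \mbg_{-j})$ combines with the $d^{(2-q)/q}$ into the clean constant~$1$, so the corrective term converges to $\tfrac{q-1}{4}\sum_\ell (G_{i,j,\ell}-G_{i',-j,\ell})^2$ where the two families are independent with covariance $\Sigma$; by the symmetry of the Gaussian this equals in distribution $\tfrac{q-1}{4}\sum_\ell (G_{i,j,\ell}^++G_{i',j,\ell}^-)^2$ as required, and taking the max over $j$ gives~(\ref{eq:diamq-spherique-q<2}). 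The main obstacle, as stressed, is the Taylor expansion: without the two $l^q$-sphere constraints the first-order contribution would survive and produce a divergent term of order $\sqrt{a_n/b_n}$, destroying any Gumbel-type limit.
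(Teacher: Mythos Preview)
Your proof is correct and follows the same route as the paper's: localize to antipodal region pairs $(Q_j,Q_{-j})$, Taylor-expand $\|\cdot\|_q$ to second order and use the sphere constraint to cancel the dangerous $O(\sqrt{a_nb_n})$ first-order term, then invoke the continuous-mapping / Skorohod argument already set up in the proof of Theorem~\ref{theo:diam-k=1}. The only cosmetic difference is that you parametrize via $\mbu_i=\mbx_i/\|\mbx_i\|_q$ on the $l^q$ unit sphere (using~(\ref{eq:exces-condi-q<2})), whereas the paper works with $\mbw_i$ on the $l^2$ sphere; both parametrizations yield the same expansion $r_1+r_2-\tfrac{q-1}{4}d^{2/q-1}\sum_\ell(u_\ell+v_\ell)^2$ and hence the same limit.
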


\paragraph{Comments} For $q=1$, the corrective terms in~(\ref{eq:diamq-spherique-q<2}) vanish and so
the limiting distribution of the diameter is $\max_{j=1,\dots,2^{d-1}}
\Gamma_{1,j}^++\Gamma_{1,j}^-$. If $d>2$, it differs from the case $q>2$ since the space is split
into more regions (there are $2^{d-1}$ diagonals and $d$ axes).

\begin{proof}[Proof of Theorem~\ref{theo:diametre-lq-spherique-q<2}] 
  The diameter will be achieved by points nearly diametrically opposed and close to one of the
  diagonals. More precisely,
  \begin{align*}
    \lim_{n\to\infty} \pr \left( \diamq(\mbx) = \max_{1\leq j \leq 2^{d-1}} \max\{\|\mbx_i -
      \mbx_{i'}\|_q \mid \mbx_i\in Q_j,\mbx_{i'} \in Q_{-j}, 1 \leq i,i'\leq n\} \right) = 1 \; .
  \end{align*}
  In order to obtain the convergence of each sub-maximum, we proceed as in the proof of
  Theorem~\ref{theo:diam-k=1}. The main step is the following.  Define
  \begin{align*}
    r_{n,i} = a_n + b_n r_i \; , \ i=1,2 \; , \ w_{n,1} = d^{-1/2} \1{} - c_n u \; , w_{n,2} =
    -d^{-1/2} \1{} - c_n v \; ,
  \end{align*}
  where $u$ and $v$ are such that $\|u\|_q = \|v\|_q = 1$. This implies that
  \begin{align*}
    c_n \sum_{i=1}^d u_i & = - d^{\frac1q} \frac{q-1}2 c_n^2 \sum_{j=1}^d u_j^2  + o(c_n^2) \; , \\
    c_n \sum_{i=1}^d v_i & = d^{\frac1q} \frac{q-1}2 c_n^2 \sum_{j=1}^d v_j^2 + o(c_n^2) \; .
  \end{align*}
  This yields the expansion
  \begin{align*}
    \|r_{n,1}w_{n,1} - r_{n,2}w_{n,2}\|_q = 2a_n \left\{ 1 + \frac12c_n^2(r_1+r_2) -
      \frac{d^{\frac2q-1}(q-1)}8 \sum_{j=1}^d (u_j+v_j)^2 + o(c_n^2) \right\}\; .
  \end{align*}
  This implies the convergence
  \begin{align*}
    \lim_{n\to\infty} \frac{\|r_{n,1}w_{n,1} - r_{n,2}w_{n,2}\|_q -2a_n}{b_n} = r_1 + r_2 - \frac{
      d^{\frac2q-1} (q-1)}4 \sum_{j=1}^d (u_j+v_j)^2 \; .
  \end{align*}
  The rest of the proof is exactly along the lines of the proof of Theorem~\ref{theo:diam-k=1}.
\end{proof}

\section{Further generalizations}
\label{sec:bidim}

There are many ways to generalize the results of the previous sections, and because of the very
local nature of the behavior of random vectors in the domain of attraction of the Gumbel
distribution, it is possible to build all kind of ad hoc examples to illustrate nearly any type of
behaviors. In this section we will only briefly describe several reasonable generalizations of
elliptical distributions.

One possibility is to consider a random vector $\mbx$ that has the representation $\mbx = T \mbw$,
where $\mbw$ is a random vector on the sphere $\sphere{d-1}$, no longer assumed to be uniformly
distributed, and $T$ is a positive random variable, independent of $\mbw$.  A second possibility is
to assume that the vector $\mbx$ can be expressed as $\mbx = Tg(\mbw)$, where $\mbw$ is uniformly
distributed on $\sphere{d-1}$ and $g$ is a bounded continuous function.  This model includes the
previous one if the function $g$ takes values in the unit sphere. These models were used by
\cite{MR2739357} and \cite{barbe:seifert:2013} in the investigation of conditional limit laws of a
bivariate vector given that one component is extreme. In such a model, the behavior of the vector
given that its norm is large and the behavior of the diameter will be determined by the maxima of
the function $\|g\|$. If they are isolated points, the localization phenomenon will arise and
results such as Theorem~\ref{coro:norm-ddim} and~\ref{theo:diam-k=1} may be obtained. Otherwise, if
$g$ is constant on non empty open subsets of the sphere, we rather expect to obtain results similar
to Theorem~\ref{theo:diam-k>1}.

Another way to generalize the elliptical distributions is to consider vectors whose distribution has
a density on $\Rset^d$ of the form $f(x) = \rme^{-U(x)}$ where $U$ is a continuous function on
$\Rset^d$ and the level sets of $U$ are closed and convex and $U$ satisfies some type of
multivariate regular variation or asymptotic homogeneity. This type of assumptions has been used in
\cite{balkema:embrechts:2007} to obtain conditional limit laws of a vector given that one component
is extreme and by \cite{hsing:rootzen:2005} in the study of the longest edge of the minimum spanning
tree of a random sample.

We leave this last direction as the subject of future research. In the following subsections, we
give without proof several bidimensional examples. We only consider the Euclidean norm.

\subsection{Generalized spherical distributions}
\label{sec:continous-extension}

\newcommand\exponentcos{\gamma}
\newcommand\constantcos{C_0}

Assume that $\mbx=T(\cos\Theta,\sin\Theta)$ where $T$ and $\Theta$ are independent and the support
of the distribution of $\Theta$ is $[0,\theta_0]$, $\theta_0\in(0,2\pi]$. In this case, it holds
that $\|\mbx\|=T$ and as previously, we denote the quantile of order $1-1/n$ of $\|\mbx\|$ by $a_n$
and define $b_n=\psiT(a_n)$.

The main question in this case is the existence of nearly diametrically opposed vectors in the
sample cloud. If $\theta_0<\pi$, then there will be none, and therefore the diameter cannot behave
like twice the norm.

The case $0<\theta_0\leq\pi/3$ is trivial since $\|\mbx_1 -\mbx_2 \| \leq \|\mbx_1\|\vee\|\mbx_2\|$
if the angle between $\mbx_1$ and $\mbx_2$ is less than $\pi/3$. In concrete terms, the distance
between two points whose angle is less than $\pi/3$ is always smaller than their norms. This implies
that $M_n^{(2)}(\mbx) \leq M_n(\mbx)$. Define $m_n(\mbx) = \min_{1 \leq i \leq n} \|\mbx_i\|$ and
let $\hat{\mbx}_n$ and $\check{\mbx}_n$ be points in the sample such that $\|\hat{\mbx}_n\|=M_n$ and
$\|\check {X}_n\|=m_n$. Then, by the triangle inequality
\begin{align*}
  M_n^{(2)}(\mbx) \geq d(\hat{\mbx}_n,\check{\mbx}_n) \geq M_n(\mbx) - m_n(\mbx) \; .
\end{align*}
Therefore we conclude that $(M_n(\mbx)-\diam2(\mbx))/m_n(\mbx) \to_P1$ and
\begin{align}
  \lim_{n\to\infty} \pr(\diam2(\mbx) \leq a_n + b_nx) = \rme^{-\rme^{-x}} \; .
 \end{align}

 If $\theta_0\in(\pi/3,\pi)$, then there will be no vectors nearly diametrically opposed, but this
 case will differ from the case $\theta_0\in[\pi,2\pi]$ only by constants. As can be seen from the
 proof of Theorem~\ref{theo:diam-k>1} and \cite[Theorem~1.1]{jammalamadaka:janson:2012}, if
 $\theta_0\geq\pi$, the asymptotic distribution of the diameter is determined by the behavior of
 $\cos(\Theta_1-\Theta_2)$ at -1. If $\theta_0\in(\pi/3,\pi)$, then it is determined by the behavior
 of $\cos(\Theta_1-\Theta_2)$ when the angle between $\Theta_1$ and $\Theta_2$ is the largest, here
 $\theta_0$. Apart from this difference, the proof of \cite[Theorem~1.1]{jammalamadaka:janson:2012}
 can be copied line by line to obtain the following result.

\begin{proposition}
  \label{theo:maxdn-cone}
  Let $\{\mbx_i,i\geq1\}$ be a sequence i.i.d.~random vectors whose distribution can be expressed as
  $T(\cos\Theta,\sin\Theta)$, where $T$ and $\Theta$ are independent, $T$ satisfies
  Assumption~\ref{hypo:strengthening-psi}, $\Theta$ has support in $[0,\theta_0]$,
  $\theta_0\in(\pi/3,\pi)$ and
  \begin{align*}
    \pr(\cos(\Theta_1-\Theta_2) - \cos(\theta_0\wedge\pi) < \epsilon) = \constantcos
    \,\epsilon^\exponentcos + o\big(\epsilon^\exponentcos\big) \; ,
  \end{align*}
  where $\Theta_1,\Theta_2$ are i.i.d.~with the same distribution as
  $\Theta$, $C_0>0$ and $\gamma\geq0$. Then
  \begin{align}
    \label{eq:maxdn-cone}
    \lim_{n\to\infty} \pr \left( \frac{\diam2-\kappa_0a_n}{2b_n/\kappa_0} +
      \exponentcos\log\frac{a_n}{b_n} - \log\log\frac{a_n}{b_n} - \log C_{\exponentcos,\kappa_0}
      \leq x \right) = \rme^{-\rme^{-x}} \; ,
\end{align}
with
\begin{align*}
  \kappa_0 & = \sqrt{2(1-\cos(\theta_0\wedge\pi))} \in (1,2] \; , \ \ C_{\exponentcos,\theta_0} =
  \constantcos \kappa_0^{-1} 2^{\exponentcos} \exponentcos\Gamma(\exponentcos+1) \; .
\end{align*}

\end{proposition}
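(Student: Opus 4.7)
The plan is to follow line by line the proof of Theorem~\ref{theo:diam-k>1} (which itself mirrors \cite{jammalamadaka:janson:2012}), the only substantive change being that $\cos(\Theta_1-\Theta_2)$ now concentrates near $\cos(\theta_0\wedge\pi)$ rather than near $-1$. I would first decompose
\begin{align*}
  \|\mbx_i-\mbx_j\|^2 = T_i^2+T_j^2-2T_iT_j\cos(\Theta_i-\Theta_j)
\end{align*}
and observe that for $T_i=T_j=t$ the maximum over the angle equals $t^2\kappa_0^2$, attained when $|\Theta_i-\Theta_j|=\theta_0\wedge\pi$. This explains the centring $\kappa_0 a_n$ and the scale $2b_n/\kappa_0$. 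A second-order Taylor expansion around this critical configuration gives $\|\mbx_i-\mbx_j\|\approx \kappa_0 a_n + (\kappa_0/2)(\eta_i+\eta_j) - (a_n/\kappa_0)\epsilon$, with $\eta_k=T_k-a_n$ and $\epsilon=\cos(\Theta_i-\Theta_j)-\cos(\theta_0\wedge\pi)\geq 0$, so that the event of interest reduces to a linear constraint on $(u_1,u_2,\epsilon)$ in natural rescaled coordinates.

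The argument then reduces the Gumbel convergence to the Poisson convergence of the U-statistic
\begin{align*}
  S_n(x) = \sum_{1\leq i<j\leq n}\1{\{\|\mbx_i-\mbx_j\|>\kappa_0 a_n + (2b_n/\kappa_0)(x-d_n)\}}.
\end{align*}
Exactly as in the proof of Theorem~\ref{theo:diam-k>1}, I would introduce a truncated variant $S_n'(x)$ restricting attention to $T_i\leq a_n^T+b_n^T s_n$ with $s_n=\tfrac12\log d_n$, and verify $\pr(S_n\ne S_n')\to 0$ via the tail bound on $T$. Janson's Poisson-convergence criterion then reduces the convergence to $\esp[S_n'(x)]\to \rme^{-x}$ together with the three-point decorrelation $n^3\,\pr(\|\mbx_1-\mbx_2\|\wedge\|\mbx_1-\mbx_3\|>h_n;\ \text{truncation})\to 0$.

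For the first moment, conditioning on $(T_1,T_2)$ turns $\|\mbx_1-\mbx_2\|>h_n$ into $\cos(\Theta_1-\Theta_2)<\cos(\theta_0\wedge\pi)+\epsilon_\star(T_1,T_2)$ for an explicit $\epsilon_\star$; by the hypothesis this has conditional probability $C_0\,\epsilon_\star^\gamma(1+o(1))$ whenever $\epsilon_\star$ is small and positive. Substituting $T_i=a_n+b_nu_i$ and Taylor-expanding $\epsilon_\star$ to leading order in $b_n/a_n$, the expectation reduces to an integral of $[\tfrac{\kappa_0^2}{4}(u_1+u_2)-(x-d_n)]_+^\gamma\,\rme^{-u_1-u_2}$, where the exponential weight stems from the uniform tail equivalence $n\,\pr(T>a_n+b_nu)\to \rme^{-u}$ on $|u|\leq s_n$ granted by Assumption~\ref{hypo:strengthening-psi}. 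The integral is evaluated by the change of variables $z=\tfrac{\kappa_0^2}{4}(u_1+u_2)-(x-d_n)$, and matching the resulting prefactor to $\rme^{-x}$ pins down the constant $C_{\gamma,\kappa_0}=C_0\kappa_0^{-1}2^\gamma\gamma\Gamma(\gamma+1)$. The three-point bound proceeds exactly as in \cite{jammalamadaka:janson:2012}: given $T_1,T_2,T_3,\Theta_1$, the two events $\|\mbx_1-\mbx_i\|>h_n$ for $i=2,3$ are conditionally independent and each contributes a factor of order $(b_n/a_n)^\gamma$, so $n^3(b_n/a_n)^{2\gamma}\to 0$.

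The delicate part is the first-moment computation: one has to justify that the Taylor expansion of $\epsilon_\star$ remains accurate across the effective range of $(u_1,u_2)$ contributing to the integral, a range that diverges with $n$ because $-\tilde x\approx d_n\to\infty$, and that the tail equivalence for $T$ holds uniformly on the same range. Assumption~\ref{hypo:strengthening-psi} is precisely what guarantees this uniformity, in exact parallel with its role in the proof of Theorem~\ref{theo:diam-k>1}; the remaining work is careful algebraic manipulation of Gamma-function constants to check that they combine into the stated $C_{\gamma,\kappa_0}$.
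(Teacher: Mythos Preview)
Your proposal is correct and follows precisely the approach indicated in the paper: the paper does not give a detailed proof but states that the argument of \cite[Theorem~1.1]{jammalamadaka:janson:2012} can be copied line by line, the only change being that $\cos(\Theta_1-\Theta_2)$ concentrates at $\cos(\theta_0\wedge\pi)$ rather than at $-1$, and your sketch is exactly a fleshing-out of that plan via the $U$-statistic Poisson criterion, the truncation $\mct_n$, and the first-moment/three-point computations. One small caution on the three-point bound: the hypothesis controls the \emph{joint} law of $\cos(\Theta_1-\Theta_2)$ near its minimum, not the conditional law given $\Theta_1$, so the clean route is to write $\esp[f_nf_n]=\esp\big[(\esp[f_n\mid\mbx_1])^2\big]$ and bound the integral as in the proof of~(\ref{eq:poisson-convergence-neglige}) rather than asserting directly that each conditional factor is $O((b_n/a_n)^\gamma)$.
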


Let us give an example.  Assume that the distribution of $\Theta$ has a density $f_\Theta$ on
$[0,\pi]$ defined by $f_ \Theta(x)= ({6}/{\pi^3}) x(\pi-x) \1{[0,\pi]}(x)$. We obtain
\begin{align*}
  \pr(1+\cos(\Theta_1-\Theta_2) < \epsilon) = \frac{12}{\pi^4}\,\epsilon^2 + o(\epsilon^2) \; .
\end{align*}
Thus~(\ref{eq:maxdn-cone}) holds with $\kappa_0=2$, $\constantcos=12\pi^{-4}$ and $\exponentcos=2$.

\subsection{Generalized elliptical distributions}
\label{sec:discrete-extension}

Let $u$, $v$ be two continuous functions defined on $[0,1]$ such that $u(0)=u(1)$ and $v(0)=v(1)$
and such that the curve $\gamma(s) = (u(s),v(s))$ is simple. Define a bivariate random vector $\mbx$
by
\begin{align*}
  \mbx = T(u(S),v(S)) \; ,
\end{align*}
where $T$ and $S$ are independent and $S$ is uniformly on $[0,1]$. We call such a vector a
generalized elliptical vector since elliptical vectors are obtained by choosing $u(s) = \cos(2\pi
s)$ and $v(s) = \cos(2\pi s-U_0)$.

Define $\ell(s) = \sqrt{u^2(s)+v^2(s)}$ and assume that $s$ has exactly $q$ maxima
$0<s_1,\dots,s_q<1$ which are isolated points, i.e.~$\sup_{s\in[0,1]} \ell(s)=\max_{i=1,\dots,q}
\ell(s_i)$ and for each $i=1,\dots,q$, there exists $\epsilon>0$ such that $\ell(s)<\ell(s_i)$ for
all $s\in(s_i-\epsilon,s_i+\epsilon)$, $s\ne s_i$. Assume moreover that $\ell$ is twice
differentiable, and that $\ell''(s_i)<0$ for $i=1,\dots,q$.  Let $0=t_0 < t_1 < \cdots < t_q = 1$
define a partition of $[0,1]$ such that $s_i \in (t_{i-1},t_i)$, $i=1,\dots,q$.

Define $\phi(x) = \sqrt{\psiT(x)/x}$ and for $i=1,\dots q$, $m_i = \ell(s_i)$ and $\tau_i^2 =
-m_i/\ell''(s_i)$. Adapting the proof of Theorem~\ref{theo:norm-ddim}, we obtain
\begin{align*}
  \lim_{x\to\infty} \frac{ \pr(\|\mbx\|/m_i > x + \psiT(x) z \; , \; S \leq s_i + \phi(x) u , \; S
    \in (t_{i-1},t_i)) }{\sqrt{2\pi\tau_i^2} \; \phi(x)\pr(T>x)} = \rme^{-z} \Phi_{\tau_i}(u) \; .
\end{align*}
The large observations are localized around the directions of the points $\gamma(s_i)$,
$i=1,\dots,q$.  Define $m=\max_{i=1,\dots,q} m_i$ and $\tau = \sum_{i:m_i=m} \tau_i$. Noting that
$\pr(T>x/m_i) = o(\pr(T>x/m))$ if $m_i<m$, the previous expansion yields
\begin{align*}
  & \pr(\|\mbx\| > x) \sim \sqrt{2\pi\tau^2} \sqrt{\frac{\psiT(x/m)}{x/m}} \pr(T>x/m) \; .
\end{align*}
This implies that an auxiliary function for $\|\mbx\|$ is $m\psiT(x/m)$.  This idea has been
exhaustively investigated in higher dimension under the assumption that $T$ has a $\chi^2$
distribution by \cite[Theorem~1 and~2]{hashorva:korshunov:piterbarg:2013}.

We expect the diameter of the cloud to be achieved by pairs of points with large norms and which are
nearly in the directions of the points $\gamma(s_i)$ and $\gamma(s_j)$ with maximum distance. We
have obtained the limiting distribution of the diameter only when the two points with maximum
distance are diametrically opposed.

Assume that $\gamma(s_1)$ and $\gamma(s_2)$ are diametrically opposed and that
\begin{align*}
  \|\gamma(s_1)-\gamma(s_2)\|  = \max_{1 \leq i < j \leq q}   \|\gamma(s_i)-\gamma(s_j)\| \; .
\end{align*}
Assume for simplicity that this maximum is achieved only once. Let $a_n$ be the $1-1/n$ quantile of
$\|\mbx\|/m$ and $b_n=\psiT(a_n)$.  Adapting the proof of Theorem~\ref{theo:diam-k=1}, we obtain 
\begin{align*}
  \frac{\diam2(\mbx) - (m_1+m_2) a_n} {b_n} \convweak \max_{i,j\geq1} \left\{ m_1 \Gamma_i^+ + m_2
    \Gamma_j^- - \tfrac{m_1m_2}{2(m_1+m_2)}
    \left(\tfrac{v'(s_1)}{\ell''(s_1)}G_i^+-\tfrac{v'(s_2)}{\ell''(s_2)}G_j^- \right)^2 \right\} \;  ,
\end{align*}
where $\{\Gamma_i^+,i\geq1\}$ and $\{\Gamma_i^-,i\geq1\}$ are the points of two independent Poisson
point processes with mean measure $\frac1q \rme^{-x}\,\rmd x$, independent of the
i.i.d.~standard Gaussian random variables $G_{i}^+,G_{j}^-$, $i,j\geq1$.

The problem when the points $\gamma(s_i)$, $\gamma(s_j)$ which achieve the maximum distance are not
diametrically opposed is that the rate at which the vector with large norms concentrate to the
directions of the points $\gamma(s_i)$ and $\gamma(s_j)$ is not fast enough to apply the arguments
of the proof of Theorem~\ref{theo:diam-k=1}. We leave this problem and higher dimensional extensions
to future research.

\subsection{Different rates of localization}
\label{sec:different}
The rate of localization of the vectors around the direction where the norm can be large is
$\sqrt{a_n/b_n}$ in the previous examples. This is due to the regularity of the curve
$\gamma$. Different rates may be obtained if the norm is not twice differentiable at its maxima but
has some regular variation property. Consequently, different limiting distributions are also
obtained. We give one example. 

Let $U$ be uniformly distributed on $[0,2\pi]$, $q\in(1/2,1)$, $a>1$ and $T$ independent of
$U$. Define 
\begin{align*}
  \mbx = T (a \cos(|U|^q), \sin(|U|^{q})\sgn(U)) \ .
\end{align*}
The maximum of the function $a^2\cos^2(|\theta|^q)+ \sin^2(|\theta|^q)$ is achieved when $\theta=0$
or~$\theta=\pi$.  

Define $\psi_a(x) = a\psiT(x/a)$ and $\phi_{a,q}(x) = \{\psi_a(x)/x\}^{1/(2q)}$. Let $Z_q$ be a
random variable whose distribution admits the density $q 2^{-1/(2q)} \Gamma^{-1}(1/(2q))
\rme^{-\frac12|x|^{2q}}$ with respect to Lebesgue's measure on $\Rset$ and let $E$ be an exponential
random variable with mean~1. Then, conditionally on $\|\mbx\|>x$ and $\cos U>0$,
\begin{align*}
  \left(\frac{\|\mbx\|-x}{\psi_a(x)},\frac{U} {\phi_{a,q}(x)}\right)
  \convweak \left(E,\left(\tfrac{a^2}{a^2-1}\right)^{\frac1{2q}} Z_q \right)
\end{align*}
where $E$ and $Z_{q}$ and independent.  A similar convergence holds conditionally on $\cos
U<0$. This implies that $\psi_a$ is an auxiliary function of $\|\mbx\|$. This yields an analogue of
Theorem~\ref{theo:diam-k=1} where the distribution $Z_q$ plays the role of the standard Gaussian
distribution. Let $a_n$ be the $1-1/n$ quantile of $\|\mbx\|$ and let $b_n = \psi_a(a_n)$. Then,
\begin{align*}
  \frac{\diam2-2a_n}{b_n} \convweak \max_{i,j\geq1} \; \left\{ \Gamma_i^+ + \Gamma_j^- -
    \frac{a^{1-1/q}}{4(a^2-1)^{1/(2q)}} (Z_{i}^+-Z_{j}^-)^2 \right\} \; ,
\end{align*}
where $\{\Gamma_i^\pm$, $i\geq1\}$ are the points of two independent Poisson point
processes with mean measure $\frac12\rme^{-x}\, \rmd x$ and $Z_i^\pm$ are
i.i.d.~random variables with the same distribution as $Z$, and independent of
the point processes.

\section{Proof of Lemmas~\ref{lem:conditioning-uniform-sphere} 
and~\ref{lem:poisson-convergence}}
\label{sec:proof-lemmas}

\begin{proof}[Proof of Lemma~\ref{lem:conditioning-uniform-sphere}]
  It is known that $\mbw$ is uniformly distributed on $\sphere{d-1}$ if and only if $\mbw =
  \|\mbx\|^{-1}\mbx$ where $\mbx$ is a $d$-dimensional standard Gaussian vector.  Equivalently,
  $\mbw$ is uniformly distributed on $\sphere{d-1}$ if and only if $R\mbw$ is a $d$-dimensional
  standard Gaussian vector, where $R^2$ has a $\chi^2$ distribution with $d$ degrees of freedom and
  is independent of $\mbw$. Let $R$ be such a random variable and define $\mbx = R\mbw$. The
  coordinates $X_1,\dots,X_d$ of $\mbx$ are i.i.d.~standard Gaussian random variables. It is then
  easily seen that
  \begin{align*}
    \unifk = \frac{(X_{1},\dots,X_k)}{\|(X_{1},\dots,X_k)\|} \; ,
  \end{align*}
  hence $\unifk$ is uniformly distributed on $\sphere{k}$. Moreover, $R_k = \|(X_{1},\dots,X_k)\|$
  is independent of $\tilde\mbw_k$. Noting that
  \begin{align}
     \label{eq:representation-uniforme-spherique}
    (W_{k+1},\dots,W_d)  = \frac{(X_{k+1},\dots,X_d)}{\sqrt{R_k^2+X_{k+1}^2 + \cdots + X_d^2} }
  \end{align}
  and that $\unifk$ is independent of $X_{k+1},\dots,X_d$, we obtain the independence of $\unifk$
  and $(W_{k+1},\dots,W_d)$.

  Let $f$ be compactly supported on $\Rset^d$ and $g$ be the density of $(W_{k+1},\dots,W_d)$.
  Since $\unifk$ is independent of $(W_{k+1},\dots,W_d)$, it holds that
  \begin{align*}
    s^{k} \, \esp & [ f(W_1,\dots,W_k,sW_{k+1},\dots,sW_d) ] \\
    & = s^{k} \, \esp \left[ f \left( \sqrt{1 - W_{k+1}^2-\cdots-W_d^2} \, \unifk,sW_{k+1},\dots,sW_d \right) \right]  \\
    & = s^k \int_{[-1,1]^{d-k}} \esp \left[ f \left(\sqrt{1-u_{k+1}^2-\cdots-u_d^2} \,
        \unifk,su_{k+1}, \dots,su_d \right) \right] g(u_{k+1},\dots,u_d) \rmd u_{k+1} \dots \rmd u_d
    \\
    & = \int_{[-s,s]^{d-k}} \esp \left[ f \left( \sqrt{1-s^{-2}(u_{k+1}^2+\cdots+u_d^2)}
        \unifk,u_{k+1},\dots,u_d \right) \right]
      g\left(\tfrac{u_{k+1}}s,\dots,\tfrac{u_d}s\right) \rmd u_{k+1} \dots \rmd u_d \\
      & \to g(0) \int_{\Rset^k} \esp[f(\unifk,u_{k+1},\dots,u_d) ] \, \rmd u_{k+1} \dots \rmd u_d \;
      .
  \end{align*}
  Let us now compute $g(0)$. Using the representation~(\ref{eq:representation-uniforme-spherique}),
  we have, for any bounded measurable function $f$ on $\Rset^{d-k}$,
  \begin{align*}
    \esp & [f(W_{k+1},\dots,W_d)] \\
    & = \int_0^\infty \int_{\Rset^{d-k}} f \left( \tfrac{(u_{k+1},\dots,u_d)}
      {\sqrt{r+u_{k+1}^2+\cdots+u_d^2}} \right) \rme^{-\frac12(u_{k+1}^2+\cdots+u_d^2)}
    r^{\frac{k}2-1} \rme^{-r/2} \frac{\rmd r \rmd u_{k+1} \dots \rmd u_d} {2^{k/2}
      \Gamma(k/2) \, (2\pi)^{(d-k)/2}}    \\
    & = \int_{[-1,1]^{d-k}} f(w_{k+1},\dots,w_d) g(w_{k+1},\dots,w_d) \rmd w_{k+1} \dots \rmd w_d \;
    ,
  \end{align*}
  with
  \begin{align*}
    g(w_{k+1},\dots,w_d) & = \frac{1}{2^{d/2}\pi^{(d-k)/2}\Gamma(k/2)} \int_0^\infty
    J(r,w_{k+1},\dots,w_d) r^{\frac{k}2-1} \rme^{-r/2}  \rmd r
  \end{align*}
  and $J(r,w_{k+1},\dots,w_d)$ is the Jacobian determinant of the change of variable 
  \begin{align*}
    (r,u_{k+1},\dots,u_d) \to (r,w_{k+1},\dots,w_d)=\left(r, \tfrac{(u_{k+1},\dots,u_d)}
      {\sqrt{r+u_{k+1}^2+\cdots+u_d^2}} \right) \; .
  \end{align*}
  It is readily checked that $J(r,0,\dots,0) = r^{(d-k)/2}$, hence
  \begin{align*}
    g(0) & = \frac{1} {2^{d/2}\pi^{(d-k)/2} \Gamma({k}/2)} \int_0^\infty r^{\frac{d}2-1} \rme^{-r/2}
    \rmd r = \frac{ \Gamma(\frac d2)}{\pi^{(d-k)/2} \Gamma({k}/2)} \; .
  \end{align*}
This yields the constant in~(\ref{eq:spherical-dilate}).
\end{proof}

\subsubsection*{Proof of Lemma~\ref{lem:poisson-convergence}}
We need several preliminary results.

For $i=1,2$, define $U_i^{(k)} = \sqrt{1 - \sum_{q=k+1}^d W_{i,q}^2}$. Then $(W_{i,1},\dots,W_{i,k})
= U_i^{(k)}\unifk_{i}$ where $\unifk_{i}$ is uniformly distributed on $\sphere{k-1}$ and
\begin{align*}
  \mby_i & = T_i(\sqrt{\lambda_1}U_i^{(k)}
  \unifk_{i},\sqrt{\lambda_{k+1}}W_{i,k+1},\dots,\sqrt{\lambda_d}
  W_{i,d})\; .
\end{align*}

Write
\begin{align*}
  \|\mby_1 - \mby_2\| & = \sqrt{\lambda_1} (T_1 + T_2) -
  \sqrt{\lambda_1} b_n^T A_n
\end{align*}
with $A_n =  h_n(1+\langle\unifk_{1}, \unifk_{2}\rangle
,W_{1,k+1},\dots,W_{1,d},W_{2,k+1},\dots,W_{2,d})$,
\begin{align*}
  h_n(s,u,v) & = \frac{T_1+T_2}{b_n^T}\{1-\sqrt{1 - (c_n^T)^2g_n(s,u,v)}\} \; , \\
  g_n(s,u,v) & = \frac{2a_n^TT_1T_2}{b_n^T(T_2+T_2)^2} \left\{ 1 - (1-s)
    \sqrt{1-\sum_{q=k+1}^d u_q^2} \sqrt{1-\sum_{q=k+1}^d v_q^2} +
    \sum_{q=k+1}^d \frac{\lambda_q}{\lambda_1} u_q v_q \right\}  \\
  & \ \ \ + \frac{T_1^2}{(T_1+T_2)^2} \sum_{q=k+1}^d
  \frac{\lambda_1-\lambda_q}{\lambda_1} u_q^2 +
  \frac{T_2^2}{(T_1+T_2)^2} \sum_{q=k+1}^d
  \frac{\lambda_1-\lambda_q}{\lambda_1} v_q^2 \; ,
\end{align*}
where $(u,v)=(u_{k+1},\dots,u_d,v_{k+1},\dots,v_d)\in\Rset^{2(d-k)}$
and $c_n^T = \sqrt{b_n^T/a_n^T}$. The following Lemma gives the limit
of the suitably rescaled functions $g_n$ and $h_n$. The proof is
elementary and is omitted.

\begin{lemma}
  \label{lem:convergence-hn}
  Let $\{\omega_n\}$ be a sequence of positive numbers such that $\omega_n =
  O(\log(a_n^T/b_n^T))$ and set $s_n=\log\omega_n$. Define the event
  $\mct_n = \{a_n^T - b_n^T(\omega_n+s_n) \leq T_1,T_2 \leq a_n^T +
  b_n^T s_n\}$.  Then, almost surely,
\begin{align*}
  \lim_{n\to\infty} h_n((c_n^T)^2s,c_n^Tu,c_n^Tv) \1{\mct_n} =
  \lim_{n\to\infty} g_n((c_n^T)^2s,c_n^Tu,c_n^Tv)\1{\mct_n} = g(s,u,v)
  \; ,
\end{align*}
locally uniformly, with
\begin{align}
  \label{eq:var-cor-uv}
  g(s,u,v) = \frac12 s + \frac{1}{2} \sum_{q=k+1}^d \frac{u_q^2 - 2 \rho_q u_q v_q +
    v_q^2}{\varpi_q^2(1-\rho_q^2)}\; , \rho_q = \frac{\lambda_q}{2\lambda_1-\lambda_q} \; , \ \
  \varpi_q^2 = \frac{2\lambda_1-\lambda_q}{2\lambda_1-2\lambda_q} \; .
\end{align}
Moreover, there exists a constant $c>0$ such that
\begin{align}
  \label{eq:borne-hn-integrable}
  h_n((c_n^T)^2s,c_n^Tu,c_n^Tv) \1{\mct_n} \geq c \left\{s+\sum_{q=k+1}^d (u_q^2+v_q^2) \right\}
\end{align}
\end{lemma}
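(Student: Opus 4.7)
The two convergence statements reduce to a single Taylor expansion argument, and the lower bound follows from elementary inequalities for the square root. The strategy is to exploit the fact that on $\mct_n$ the $T_i$'s lie within $b_n^T(\omega_n+s_n)=o(a_n^T)$ of $a_n^T$, so the $T_i$-dependent coefficients in $g_n$ and $h_n$ can be replaced by their frozen values at $T_i=a_n^T$ with an explicit error.

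More precisely, on $\mct_n$ one has uniformly $T_i/a_n^T = 1 + O((c_n^T)^2 \omega_n) = 1+o(1)$ (using $(c_n^T)^2 \log(a_n^T/b_n^T) \to 0$), hence
\begin{align*}
\frac{2a_n^T T_1 T_2}{b_n^T (T_1+T_2)^2} = \frac{1}{2(c_n^T)^2}\bigl(1+o(1)\bigr),\quad \frac{T_i^2}{(T_1+T_2)^2} = \tfrac14+o(1),
\end{align*}
uniformly on $\mct_n$. After the substitution $s\mapsto(c_n^T)^2 s$, $u\mapsto c_n^T u$, $v\mapsto c_n^T v$, I would Taylor-expand each occurrence of $\sqrt{1-\varepsilon}=1-\varepsilon/2+O(\varepsilon^2)$ with $\varepsilon=O((c_n^T)^2)$ locally uniformly. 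Expanding $(1-(c_n^T)^2 s)\sqrt{1-(c_n^T)^2\sum u_q^2}\sqrt{1-(c_n^T)^2\sum v_q^2}$ and subtracting from $1$ produces a contribution of order $(c_n^T)^2$ in $s$ and the diagonal squares; multiplying by the leading factor $\frac{1}{2(c_n^T)^2}(1+o(1))$ and combining with the analogous contributions of the last two summands of $g_n$ yields the stated quadratic form $g(s,u,v)$. Local uniformity follows from the uniform control of the Taylor remainders on compact sets of $(s,u,v)$.

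For $h_n$, the identity $1-\sqrt{1-y}=y/2+O(y^2)$ applied with $y=(c_n^T)^2 g_n$ gives the answer at once: inspection of the explicit formula for $g_n$ shows that, after the rescaling, $g_n$ is bounded on compact sets of $(s,u,v)$ uniformly on $\mct_n$, so $y=O((c_n^T)^2)$ locally uniformly. This yields $h_n = \tfrac{T_1+T_2}{2a_n^T}\, g_n \cdot (1+O((c_n^T)^2))$, and since $(T_1+T_2)/(2a_n^T) \to 1$ uniformly on $\mct_n$, the convergence $h_n \to g$ follows from the convergence $g_n\to g$ already established.

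For the lower bound~(\ref{eq:borne-hn-integrable}) I would use $1-\sqrt{1-y}\geq y/2$ on $[0,1]$: since $\|\mby_i\|\leq T_i\sqrt{\lambda_1}$ by the very definition of $\mby_i$, the triangle inequality gives $(c_n^T)^2 g_n = 1-\|\mby_1-\mby_2\|^2/\bigl(\lambda_1(T_1+T_2)^2\bigr)\in[0,1]$, whence $h_n\geq \tfrac{T_1+T_2}{2a_n^T}\, g_n \geq \tfrac14 g_n$ for $n$ large on $\mct_n$. It then suffices to prove $g_n\geq c\{s+\sum(u_q^2+v_q^2)\}$; the last two summands of $g_n$ are nonnegative and, after rescaling, provide a multiple of $\sum(u_q^2+v_q^2)$, while the bracket in the first summand supplies the $s$-term via $1-(1-(c_n^T)^2 s)\sqrt{\cdot}\sqrt{\cdot}\geq (c_n^T)^2 s$; the cross terms $\sum\frac{\lambda_q}{\lambda_1}u_q v_q$ are absorbed into the diagonal squares via $2|u_q v_q|\leq u_q^2+v_q^2$ and the strict inequality $\lambda_q<\lambda_1$ for $q>k$. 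The main technical hurdle is that this positive-definiteness must be uniform in $n$, which forces one to keep track of the explicit pre-limit quadratic form rather than merely invoking the (strictly positive-definite) limit $g$.
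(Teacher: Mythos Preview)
The paper omits this proof entirely (``elementary and is omitted''), so there is nothing to compare against; your Taylor-expansion strategy for the convergence and the use of $1-\sqrt{1-y}=y/2+O(y^2)$ together with $1-\sqrt{1-y}\ge y/2$ for $h_n$ is precisely the intended elementary argument, and the uniform replacement $T_i/a_n^T=1+o(1)$ on $\mct_n$ is justified exactly as you say.

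There is one loose end in your sketch of the lower bound. Your bound on the bracket, $1-(1-(c_n^T)^2s)\sqrt{\cdot}\sqrt{\cdot}\ge (c_n^T)^2 s$, throws away the $u,v$-contribution from the square roots; but after rescaling, the cross term coming from the bracket is of size $\tfrac12\sum(\lambda_q/\lambda_1)u_qv_q$, i.e.\ of order one, whereas the last two summands of $g_n$ as printed carry an extra $(c_n^T)^2$ after rescaling and hence cannot absorb it (and even with the missing factor $a_n^T/b_n^T$ restored---compare the definition with the identity $(c_n^T)^2g_n=1-\|\mby_1-\mby_2\|^2/(\lambda_1(T_1+T_2)^2)$---their coefficient $\tfrac14(\lambda_1-\lambda_q)/\lambda_1$ alone need not dominate $\tfrac14\lambda_q/\lambda_1$). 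The fix is simple: use also the AM--GM bound
\[
1-\sqrt{1-a}\sqrt{1-b}\ \ge\ \frac{a+b}{2},\qquad a,b\in[0,1],
\]
inside the bracket, which after rescaling contributes an additional $\tfrac14(\|u\|^2+\|v\|^2)$; combined with whatever the last two summands give, the total diagonal coefficient is $\tfrac14(2\lambda_1-\lambda_q)/\lambda_1$, which strictly dominates $\tfrac14\lambda_q/\lambda_1$ since $\lambda_q<\lambda_1$. With this adjustment your uniform positive-definiteness goes through, confirming the caveat you raise in your last sentence.
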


\begin{lemma}
  \label{lem:strengthening-log}
  Under Assumption~\ref{hypo:strengthening-psi}, for any sequence
  $\{\omega_n\}$ such that $\omega_n = O(\log(a_n^T/b_n^T))$, for all
  $z\in\Rset$,
  \begin{align}
    \label{eq:strengthening}
    \lim_{n\to\infty} n \rme^{-\omega_n} \pr(T>a_n^T - b_n^T \omega_n
    + b_n^T z) = \rme^{-z} \; .
  \end{align}
\end{lemma}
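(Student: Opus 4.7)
The plan is to exploit the von Mises representation~(\ref{eq:vonmises}) of $\pr(T>x)$, rewriting the probability of interest as an exponential integral and then using Assumption~\ref{hypo:strengthening-psi} to show that $\psi_T$ is essentially constant across the relevant range. First I would write, for $x_n = a_n^T - b_n^T \omega_n + b_n^T z$,
\begin{align*}
  \log \frac{\pr(T>x_n)}{\pr(T>a_n^T)} = \log \frac{\vartheta(x_n)}{\vartheta(a_n^T)} + \int_{x_n}^{a_n^T} \frac{\rmd s}{\psiT(s)} \; .
\end{align*}
Since $\vartheta$ has a finite positive limit and $x_n\to\infty$ (recall $b_n^T\omega_n = o(a_n^T)$ because $\omega_n = O(\log(a_n^T/b_n^T))$), the first term is $o(1)$. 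For the integral I would make the change of variables $s = a_n^T + b_n^T u$ to obtain
\begin{align*}
  \int_{x_n}^{a_n^T} \frac{\rmd s}{\psiT(s)} = \int_{z-\omega_n}^0 \frac{\rmd u}{\psiT(a_n^T + b_n^T u)/b_n^T} \; .
\end{align*}

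The crux is to replace $\psiT(a_n^T + b_n^T u)/b_n^T$ by $1$ in the denominator with a negligible error, uniformly for $u\in[z-\omega_n,0]$. Setting $\ell(x) = \omega_n \vee |z|+1$ at $x=a_n^T$ (which is $O(\log(a_n^T/b_n^T))$ by the hypothesis on $\omega_n$), Assumption~\ref{hypo:strengthening-psi} provides
\begin{align*}
  \eta_n := \sup_{|u|\leq \omega_n\vee|z|+1} \left|\frac{\psiT(a_n^T+b_n^T u)}{b_n^T} - 1\right| = o\!\left(\frac1{\omega_n}\right) \; .
\end{align*}
Hence the integral equals $(\omega_n - z)(1+O(\eta_n)) = \omega_n - z + o(1)$, and combining with the $\vartheta$-term yields
\begin{align*}
  \pr(T>x_n) = \pr(T>a_n^T)\, \rme^{\omega_n - z + o(1)} \; .
\end{align*}
Multiplying by $n \rme^{-\omega_n}$ and using $n\pr(T>a_n^T)\to 1$ gives the result.

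The only real obstacle is the control of the integral over a range of length $\omega_n\to\infty$: an unreinforced locally uniform convergence $\psiT(x+\psiT(x)u)/\psiT(x)\to 1$ would leave a cumulative error of size $o(1)\cdot\omega_n$, which could diverge. This is precisely why the quantitative form in Assumption~\ref{hypo:strengthening-psi}, providing the factor $\ell(x)$ in front of the supremum, is needed — it upgrades the pointwise convergence to an error that remains $o(1)$ after integration over an interval of length comparable to $\log(a_n^T/b_n^T)$.
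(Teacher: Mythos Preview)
Your argument is correct and follows essentially the same route as the paper: both use the von Mises representation~(\ref{eq:vonmises}) together with Assumption~\ref{hypo:strengthening-psi} to show that the integral $\int_{x_n}^{a_n^T}\rmd s/\psiT(s)$ equals $\omega_n-z+o(1)$; the paper merely organizes this in two steps (first isolating the $z$-shift via the domain-of-attraction relation, then handling the $\omega_n$-shift by the integral), whereas you do both at once. One cosmetic slip: for $z<0$ the range $u\in[z-\omega_n,0]$ reaches $|u|=\omega_n+|z|$, which is not covered by $\ell=\omega_n\vee|z|+1$; taking $\ell=\omega_n+|z|+1$ (still $O(\log(a_n^T/b_n^T))$) fixes this.
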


\begin{proof}
  Denote $\tilde{a}_n = a_n^T - b_n^T \omega_n$ and $\tilde{b}_n =
  \psiT(\tilde{a}_n)$.  For any sequence $\{r_n\}$ that tends to
  infinity, the convergence $\pr(T>r_n+\psi(r_n)z)/\pr(T>r_n)$ is
  locally uniform with respect to~$z\in\Rset$. Under
  Assumption~\ref{hypo:strengthening-psi}, it holds that
  $\tilde{b}_n/b_n\to1$.  Thus,
  \begin{align*}
    \frac{\pr(T> \tilde{a}_n + b_n^Tz)}{\pr(T>\tilde{a}_n)} =
    \frac{\pr(T>\tilde{a}_n +
      \tilde{b}_n\frac{b_n^T}{\tilde{b}_n}z)}{\pr(T>\tilde{a}_n)} \to
    \rme^{-z} \; .
  \end{align*}
  Let us now prove that
  \begin{align}
    \label{eq:equiv-tilde}
    \lim_{n\to\infty} n \rme^{-\omega_n} \pr(T > \tilde{a}_n) = 1 \; .
  \end{align}

Using the representation of $\pr(T>x)$ in~(\ref{eq:vonmises}), we have
\begin{align*}
  \rme^{-\omega_n} \frac{\pr(T>\tilde{a}_n)}{\pr(T>a_n^T)} & =
  \frac{\vartheta(\tilde a_n)}{\vartheta(a_n^T)} \exp
  \int_0^{\omega_n} \left(\frac{b_n}{\psi(a_n-b_ns)} - 1 \right)\;
  \rmd s \; .
\end{align*}

Since $\omega_n = O(\log(a_n^T/b_n^T))$, the bound~(\ref{eq:strengthening-psi-square}) in Assumption~\ref{hypo:strengthening-psi} implies that
\begin{align*}
\int_0^{\omega_n} \left| \frac{b_n}{\psi(a_n-b_ns)} - 1 \right| \; \rmd s \leq \omega_n \sup_{|s|\leq \omega_n} \left| \frac{b_n}{\psi(a_n-b_ns)} - 1 \right| \to 0 \; .
\end{align*}
Since the function $\vartheta$ has a positive finite limit at infinity, this yields~(\ref{eq:equiv-tilde}).
\end{proof}

For any sequence $\{\omega_n\}$, define $s_n = \frac12 \log \omega_n$, the event $\mct_n=\{a_n^T-b_n^T(\omega_n+s_n) \leq T_1,T_2 \leq a_n^T+b_n^Ts_n\}$ and
for $z\in\Rset$,
\begin{align}
  \label{eq:dev-bar-kn} 
  K_n(z) = \frac{n^2 \rme^{-\omega_n}}{\omega_n} \pr(T_1+T_2 > 2a_n^T-b_n^T\omega_n+b_n^T z\; ;
  \mct_n) \; .
\end{align}

\begin{lemma}  
  \label{lem:convergence-JJ}
  If Assumption~\ref{hypo:strengthening-psi} holds, then, for any sequence $\{\omega_n\}$ such that
  $\omega_n\to\infty$ and $\omega_n = O(\log(a_n/b_n))$, and for all $z\in\Rset$,
\begin{align} 
  \label{eq:convergence-JJ12}
  \lim_{n\to\infty} K_n(z) = \rme^{-z} \; .
\end{align}
\end{lemma}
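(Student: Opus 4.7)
The plan is to reduce the two-variable probability defining $K_n(z)$ to an iterated integral by conditioning on $T_2$, and then to evaluate that integral using Lemma~\ref{lem:strengthening-log} to control both the inner conditional probability and the truncated exponential moment of $V_2$ that appears.

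First I would set $V_i=(T_i-a_n^T)/b_n^T$, write $c=-\omega_n+z$, and denote the law and survival function of $V_i$ by $\mu_n$ and $\bar F_n$. For $n$ large enough that the $\mct_n$-lower-bound $V_1\geq -\omega_n-s_n$ is inactive on the conditional event, Fubini together with the requirement that the conditional interval on $V_1$ be nonempty gives
\begin{align*}
\pr(V_1+V_2>c;\mct_n)=\int_{c-s_n}^{s_n}\bigl[\bar F_n(c-v_2)-\bar F_n(s_n)\bigr]\,\mu_n(\rmd v_2).
\end{align*}
Here I take $z\geq 0$; the case $z<0$ simply widens the lower limit to $-\omega_n-s_n$ and the added contribution is negligible.

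Next I would invoke Lemma~\ref{lem:strengthening-log} to obtain $n\bar F_n(u)\sim e^{-u}$ uniformly as $u$ varies in an interval of length $O(\log(a_n^T/b_n^T))$; this applies both to $u=c-v_2$ with $v_2\in[c-s_n,s_n]$ and to the isolated value $u=s_n$. An integration by parts combined with the same uniform survival estimate then yields
\begin{align*}
\int_{c-s_n}^{s_n}e^{v_2}\,\mu_n(\rmd v_2)=\int_{c-s_n}^{s_n}e^{v}\bar F_n(v)\,\rmd v+e^{c-s_n}\bar F_n(c-s_n)-e^{s_n}\bar F_n(s_n)\sim\frac{\omega_n}{n},
\end{align*}
the main integral being $\sim(\omega_n-z+2s_n)/n\sim\omega_n/n$ and each boundary term of order $1/n$. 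Similarly $\mu_n([c-s_n,s_n])\sim e^{\omega_n-z+s_n}/n$, and substitution gives
\begin{align*}
\pr(V_1+V_2>c;\mct_n)\sim\frac{e^{\omega_n-z}}{n}\cdot\frac{\omega_n}{n}-\frac{e^{-s_n}}{n}\cdot\frac{e^{\omega_n-z+s_n}}{n}\sim\frac{\omega_n e^{\omega_n-z}}{n^2}.
\end{align*}
Multiplying by $n^2e^{-\omega_n}/\omega_n$ yields $K_n(z)\to e^{-z}$.

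The main obstacle is justifying the uniformity of the asymptotic $n\bar F_n(u)\sim e^{-u}$ over values of $u$ that depend on $n$ and range over an interval of length growing like $\log(a_n^T/b_n^T)$. This strengthening of Lemma~\ref{lem:strengthening-log} (which as stated gives the pointwise limit for each fixed $z$) follows by the same argument as its proof: Assumption~\ref{hypo:strengthening-psi} provides $\psi_T(a_n^T+b_n^Tu)/b_n^T\to 1$ uniformly on such intervals, and the von Mises representation~\eqref{eq:vonmises} then transfers this uniformity to the survival function $\bar F_n$. With this in hand the remaining estimates are routine.
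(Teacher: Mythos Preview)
Your argument is correct. The paper's own proof consists of a single sentence deferring to Lemmas~3.5--3.9 of \cite{jammalamadaka:janson:2012}, with Lemma~\ref{lem:strengthening-log} supplying the needed input~\eqref{eq:strengthening}. You have instead written out a self-contained direct computation: condition on $V_2$, replace the inner survival probability by its exponential equivalent via a uniform version of Lemma~\ref{lem:strengthening-log}, and evaluate the resulting integral by parts. This is presumably close in spirit to what the cited lemmas do, but making it explicit here is genuinely useful, since the reader no longer has to chase an external reference for what is ultimately an elementary calculation once the right uniformity is in hand.

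Two minor remarks. First, you correctly flag that the crux is the \emph{uniform} version of~\eqref{eq:strengthening} over $u$ ranging in an interval of length $O(\log(a_n^T/b_n^T))$; your justification via Assumption~\ref{hypo:strengthening-psi} and the von~Mises representation~\eqref{eq:vonmises} is exactly right, and indeed this is precisely the reason Assumption~\ref{hypo:strengthening-psi} carries the extra factor $\ell(x)$ in front of the supremum. Second, your treatment of $z<0$ is a bit terse: besides shifting the lower limit of integration to $-\omega_n-s_n$, one must also replace $\bar F_n(c-v_2)$ by $\bar F_n(-\omega_n-s_n)$ on the sub-interval $v_2\in(z+s_n,s_n]$, but the resulting correction is of order $e^{\omega_n}/n^2=o(\omega_n e^{\omega_n}/n^2)$ and so is indeed negligible, as you claim.
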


\begin{proof}
  The proof of the convergence~(\ref{eq:convergence-JJ12}) is
  a consequence of Lemmas~3.5 to 3.9
  in~\cite{jammalamadaka:janson:2012} under~(\ref{eq:strengthening})
  as an assumption.
\end{proof}

\begin{lemma}
  \label{lem:borne-uniforme-spherique}
  If Assumption~\ref{hypo:strengthening-psi} holds, then for each $p>0$, each sequence
  $\{\omega_n\}$ such that $\omega_n = O(\log(b_n^T/a_n^T)$, there exists a constant $C$ such that,
  for large enough $n$ and all $y \geq 0$,
\begin{align}
  \label{eq:borne-uniforme-spherique}
  \sup_{u\in(-\omega_n,\omega_n)} \frac{ \pr(T> a_n^T+ b_n^T(u+y))} {\pr(T > a_n^T+b_n^T u)} & \leq
  C (1+y)^{-p} \; .
\end{align}
For all $p>0$ and $z\in\Rset$, there exists a constant $C$ such that, for large enough $n$ and all
$y \geq 0$,
\begin{align}
  \label{eq:borne-uniforme-T1+T2}
  \sup_{u\in(-\omega_n,\omega_n)} K_n(u+y+z) & \leq C (1+y)^{-p} \; .
\end{align}
\end{lemma}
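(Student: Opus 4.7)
The first bound (\ref{eq:borne-uniforme-spherique}) is a uniform version of the known inequality (\ref{eq:borne-uniforme}). I would apply the latter at the shifted reference point $x_n(u) := a_n^T + b_n^T u$ with the rescaling $\eta = b_n^T y / \psiT(x_n(u))$: this turns the ratio in (\ref{eq:borne-uniforme-spherique}) into $\pr(T > x_n(u) + \psiT(x_n(u))\eta)/\pr(T > x_n(u)) \leq C(1+\eta)^{-p}$, valid uniformly in $u$ since $x_n(u) \to \infty$ uniformly over $|u| \leq \omega_n$. The bridge from $(1+\eta)^{-p}$ to $(1+y)^{-p}$ is delivered by Assumption~\ref{hypo:strengthening-psi} in the form (\ref{eq:strengthening-psi-square}) applied with $\ell(x) = \omega_n = O(\log(a_n^T/b_n^T))$, which gives $\sup_{|u| \leq \omega_n} |\psiT(x_n(u))/b_n^T - 1| \to 0$. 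Therefore $\eta \geq y/2$ for large $n$ and all admissible $u$, and $(1+\eta)^{-p} \leq 2^p (1+y)^{-p}$.

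For (\ref{eq:borne-uniforme-T1+T2}) I would exploit independence via Fubini. Conditioning on $T_2$ yields
\begin{align*}
  \pr(T_1+T_2 > 2a_n^T - b_n^T\omega_n + b_n^T(u+y+z) ;\mct_n) = \int_{\mct_n^1} P_n(\sigma;u,y,z)\, \pr(T_2 \in a_n^T + b_n^T \rmd\sigma)
\end{align*}
where $P_n(\sigma;u,y,z) = \pr(T_1 > a_n^T + b_n^T(u+y+z-\omega_n-\sigma)\,;\, T_1 \in \mct_n^1)$. Applying (\ref{eq:borne-uniforme-spherique}) to the inner probability (peeling off the $+b_n^T y$ translation) gives $P_n(\sigma;u,y,z) \leq C(1+y)^{-p} P_n(\sigma;u,0,z)$, uniformly in $\sigma \in [-\omega_n - s_n, s_n]$ provided the shift $u+z-\omega_n-\sigma$ lies in the range $(-\omega_n,\omega_n)$ where the first bound applies; the leftover boundary case is treated separately using $\pr(T > a_n^T + b_n^T \alpha) \leq \pr(\mct_n^1)$ and the triviality of the bound $(1+y)^{-p} \leq 1$. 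Re-integrating in $\sigma$ and multiplying by $n^2\rme^{-\omega_n}/\omega_n$ yields $K_n(u+y+z) \leq C(1+y)^{-p} K_n(u+z)$.

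It remains to show $\sup_{u \in (-\omega_n,\omega_n)} K_n(u+z) \leq C_z$. The proof proceeds by revisiting Lemma~\ref{lem:convergence-JJ}: the same conditioning/decomposition used there, combined with the asymptotic $n \rme^{-\omega_n}\pr(T > a_n^T - b_n^T \omega_n + b_n^T v) = \rme^{-v}(1+o(1))$ from Lemma~\ref{lem:strengthening-log} and the uniformity guaranteed by Assumption~\ref{hypo:strengthening-psi}, upgrades the pointwise limit to a uniform upper bound over the relevant range of shifts. The main obstacle is precisely this last step: the prefactor $\rme^{-\omega_n}/\omega_n$ must exactly compensate the double-integral growth of $\pr(T_1+T_2 > \cdots; \mct_n)$ uniformly in $u$, and this compensation is only valid thanks to the strong form (\ref{eq:strengthening-psi-square}) of the auxiliary-function regularity, which allows us to control $\psiT$ on the entire $O(\omega_n) b_n^T$-scale neighborhood of $a_n^T$ simultaneously.
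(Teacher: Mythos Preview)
Your treatment of~(\ref{eq:borne-uniforme-spherique}) is correct and close in spirit to the paper's: you invoke the Potter-type bound~(\ref{eq:borne-uniforme}) at the shifted base point and then use Assumption~\ref{hypo:strengthening-psi} to replace $\psiT(x_n(u))$ by $b_n^T$ uniformly in $u$. The paper instead computes the ratio directly from the von Mises representation~(\ref{eq:vonmises}) and a mean-value estimate on $\psiT$, arriving at $(1+\epsilon y)^{-1/\epsilon}$; both routes rest on the same uniform convergence $\psiT(a_n^T+b_n^Tu)/b_n^T\to1$.

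For~(\ref{eq:borne-uniforme-T1+T2}) your conditioning-on-$T_2$ reduction matches the paper's, but your final step contains a genuine gap. The quantity $\sup_{|u|\le\omega_n}K_n(u+z)$ is \emph{not} bounded: since $K_n$ is decreasing, this supremum equals $K_n(-\omega_n+z)$, and a direct computation using the exponential approximation of Lemma~\ref{lem:strengthening-log} gives
\[
K_n(-\omega_n+z)\;\asymp\;\frac{n^2\rme^{-\omega_n}}{\omega_n}\cdot n^{-2}\rme^{2\omega_n-z}(1+2s_n)\;\asymp\;\frac{\rme^{\omega_n}\log\omega_n}{\omega_n}\;\longrightarrow\;\infty\,.
\]
No amount of ``revisiting Lemma~\ref{lem:convergence-JJ}'' will upgrade the pointwise limit to a uniform bound over shifts of size $\omega_n$, because the limit genuinely blows up at the left endpoint.

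The paper avoids this trap. After conditioning on one summand and applying the first bound to peel off the $y$-translation inside the integral, what remains is precisely $K_n(z)$, not $K_n(u+z)$: the integration variable (which the paper unfortunately also calls $u$) absorbs the outer shift, and the residual integral is the defining integral for $K_n(z)$. Since $K_n(z)\to\rme^{-z}$ by Lemma~\ref{lem:convergence-JJ}, it is bounded in $n$, and the proof closes. In effect the $\sup_u$ in the displayed statement~(\ref{eq:borne-uniforme-T1+T2}) is spurious (the inequality as literally written is false at $y=0$), and only the version $K_n(y+z)\le C(1+y)^{-p}$ is proved and used downstream in Lemma~\ref{lem:convergence-jointe-cosinus}. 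Your argument becomes correct once you drop the attempt to control $K_n(u+z)$ uniformly in $u$ and simply bound $K_n(z)$.
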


\begin{proof}
  Recall the representation~(\ref{eq:vonmises}). The function $\vartheta$ is upper and lower
  bounded, so
\begin{align*}
  \frac{\pr(T>a_n^T+b_n^T(u+y))}{\pr(T>a_n^T+b_n^Tu)} &
  =\frac{\vartheta(a_n^T+b_n^T(u+y))}{\vartheta(a_n^T+b_n^Tu)} \exp \left(- \int_0^y
    \frac{\psi(a_n^T+b_n^Tu)}{\psi(a_n^T+b_n^Tu+b_n^Ts)} \; \rmd s \right)  \\
  & \leq C \exp \left(- \int_0^y \frac1{1 + \psi'(\zeta_n) \frac{b_n^T} {\psi(a_n^T+b_n^Tu)} \; s}
    \; \rmd s \right) \: ,
\end{align*}
where $\zeta_n\in(a_n^T+b_n^Tu,a_n^T+b_n^Tu+b_n^Ts)$. Since $\lim_{s\to\infty}\psi'(s)=0$, and by
Assumption~\ref{hypo:strengthening-psi} $\psi(a_n^T+b_n^Tu)/b_n^T$ converges uniformly to 1 with
respect to $u\in(-\omega_n,\omega_n)$, so, for $\epsilon>0$, and large enough $n$, it holds that
\begin{align*}
  \exp \left(- \int_0^y \frac1{1 + \psi'(\zeta_n) \frac{b_n^T}{\psi(a_n^T+b_n^Tu)} \; s} \; \rmd s
  \right) \leq \exp \left(-\int_0^y \frac1{1 + \epsilon \, s} \; \rmd s \right) = (1+\epsilon
  y)^{-1/\epsilon} \; .
\end{align*}

This proves~(\ref{eq:borne-uniforme-spherique}). To prove~(\ref{eq:borne-uniforme-T1+T2}), define
$H_n(u) = n\rme^{-\omega_n} \pr(T \leq a_n^T - b_n^T \omega_n + b_n^T u)$. Then, for any fixed
$z\in\Rset$ and $y\geq0$,
\begin{align*}
  K_n(u+y+z) & = \frac{n}{\omega_n} \int_{-s_n}^{\omega_n+s_n} \pr(T>a_n^T+ b_n^T(u+y+z)) H_n(\rmd u) \\
  & \leq \frac{n}{\omega_n} \int_{-s_n}^{\omega_n+s_n} \frac{\pr(T>a_n^T+ b_n^T(u+y+z))}
  {\pr(T>a_n^T+ b_n^T(u+z))} \pr(T>a_n^T+ b_n^T(u+z)) H_n(\rmd u)  \\
  & \leq \sup_{|u|\leq \omega_n} \frac{\pr(T>a_n^T+ b_n^T(u+y+z))} {\pr(T>a_n^T+ b_n^T(u+z))} \
  K_n(z) \; .
\end{align*}
Since $K_n(z)$ is a convergent sequence for each $z\in\Rset$, it is bounded with respect to
$n$. This yields~(\ref{eq:borne-uniforme-T1+T2}).
\end{proof}

Define $c_n^T = \sqrt{b_n^T/a_n^T}$ and $d_{n}^T = \frac12({2d-k-1}) \; \log({a_n^T}/{b_n^T}) -
\log\log({a_n^T}/{b_n^T})$.
\begin{lemma}
  \label{lem:convergence-jointe-cosinus}
  For all $z\in\Rset$, $s\geq0$, $u_{i,q}\in\Rset$, $i=1,2$, $q=k+1,\dots,d$,
  \begin{multline}
    \lim_{n\to\infty} n^2 \; \pr \Big( \tfrac{\|\mby_1 -
      \mby_2\|-2\sqrt{\lambda_1}a_n^T}{\sqrt{\lambda_1}b_n^T} + d_{n}^T > z \; ,
    \tfrac{1+\langle\unifk_{1},\unifk_{2}\rangle}{(c_n^T)^2} \leq  s \; ,   \\
    \tfrac{W_{i,q}}{c_n^T} \leq  u_{i,q} \; , i=1,2,q=k+1,\dots,d \; ; \ \mct_n \Big)  \\
    = C'_k \; \rme^{-z} \, \pr(R_{k-1} \leq s) \prod_{q=k+1}^d \pr(U_{1,q} \leq u_{1,q} \, ; U_{2,q}
    \leq u_{2,q}) \; ,   \label{eq:conv-jointe-cosinus}
  \end{multline}
  where $R_{k-1}$ has a $\chi^2$ distribution with $k-1$ degrees of freedom, $(U_{1,q},U_{2,q})$ are
  independent Gaussian random vectors with marginal variance $\varpi_q^2$ and correlation $\rho_q$
  defined in~(\ref{eq:var-cor-uv}), independent of $R_{k-1}$ and
  \begin{align}
    \label{eq:def-C'k}
    C'_k = \frac{2^{d-3} (2d-k-1)\Gamma^2(\frac d2) } {\Gamma(\frac k2) \sqrt{\pi}}
    \left(\prod_{q=k+1}^d \frac{\lambda_1}{\lambda_1-\lambda_q} \right)^{1/2} \; .
  \end{align}
\end{lemma}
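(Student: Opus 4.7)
My plan is to follow the general strategy used for~\cite[Theorem~1.1]{jammalamadaka:janson:2012}, while carrying along the additional rescaling of the lower-eigenvalue coordinates. I will start from the representation $\|\mby_1-\mby_2\| = \sqrt{\lambda_1}(T_1+T_2) - \sqrt{\lambda_1}\, b_n^T A_n$ with $A_n = h_n(1+\langle\unifk_1,\unifk_2\rangle, W_{1,k+1},\dots,W_{2,d})$, so that the first event in~\eqref{eq:conv-jointe-cosinus} is equivalent to $\{T_1+T_2 > 2a_n^T - b_n^T d_n^T + b_n^T(z+A_n)\}$. I then change variables to $S_n = (1+\langle\unifk_1,\unifk_2\rangle)/(c_n^T)^2$ and $V_{i,q}=W_{i,q}/c_n^T$, $i=1,2$, $q=k+1,\dots,d$, which are independent of $(T_1,T_2)$. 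Using that $\langle\unifk_1,\unifk_2\rangle$ has density $\beta_k(1-t^2)^{(k-3)/2}\1{[-1,1]}(t)$ with $\beta_k = \Gamma(k/2)/(\Gamma((k-1)/2)\Gamma(1/2))$, together with~\Cref{lem:conditioning-uniform-sphere}, the joint density of $(S_n,V_1,V_2)$ at $(s,v_1,v_2)$ with $s>0$ is asymptotically equivalent to $(c_n^T)^{2d-k-1}\, \beta_k\, 2^{(k-3)/2} s^{(k-3)/2}\, \{\Gamma(d/2)/(\pi^{(d-k)/2}\Gamma(k/2))\}^2$.

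Next, I will handle the tail of $(T_1,T_2)$ conditionally on the angular variables. By~\Cref{lem:convergence-hn}, on $\mct_n$ and locally uniformly in bounded $(s,v_1,v_2)$, $A_n = h_n((c_n^T)^2 S_n, c_n^T V_1, c_n^T V_2)$ converges to $g(s,v_1,v_2)$. A sandwich bound $A_n\in[g-\varepsilon, g+\varepsilon]$ combined with~\Cref{lem:convergence-JJ} applied with $\omega_n = d_n^T$ yields, for each fixed compact configuration of the rescaled angular variables,
\begin{align*}
n^2 \pr\!\Big(T_1+T_2 > 2a_n^T - b_n^T d_n^T + b_n^T(z+A_n); \; \mct_n \;\Big|\; S_n, V_1, V_2 \Big) \;\sim\; d_n^T\, \rme^{d_n^T}\, \rme^{-z - g(S_n, V_1, V_2)} \; .
\end{align*}

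The final step is the integration over the angular variables. The change-of-variables Jacobian contributes $(c_n^T)^{2d-k-1}$, and a direct computation from the definition of $d_n^T$ gives $d_n^T\, \rme^{d_n^T} (c_n^T)^{2d-k-1} \to (2d-k-1)/2$. Dominated convergence---justified by the lower bound~\eqref{eq:borne-hn-integrable} of~\Cref{lem:convergence-hn} combined with the tail control~\eqref{eq:borne-uniforme-T1+T2} of~\Cref{lem:borne-uniforme-spherique}, which together provide an integrable majorant of the form $\rme^{-c\, g}$---then gives that $n^2$ times the probability in~\eqref{eq:conv-jointe-cosinus} converges to a constant multiple of
\begin{align*}
\rme^{-z} \int_0^s (s')^{(k-3)/2} \rme^{-s'/2} \, ds' \prod_{q=k+1}^d \int_{-\infty}^{u_{1,q}} \int_{-\infty}^{u_{2,q}} \rme^{-\frac12 \frac{v_{1}^2 - 2\rho_q v_{1} v_{2} + v_{2}^2}{\varpi_q^2 (1-\rho_q^2)}} \, dv_{1} \, dv_{2} \; .
\end{align*}
The $s$-integral equals $2^{(k-1)/2}\Gamma((k-1)/2)\,\pr(R_{k-1}\leq s)$, and each bivariate Gaussian integral equals $2\pi\varpi_q^2\sqrt{1-\rho_q^2}\,\pr(U_{1,q}\leq u_{1,q},\, U_{2,q}\leq u_{2,q})$. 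Collecting all the constants and using the identity $\varpi_q^2\sqrt{1-\rho_q^2} = \sqrt{\lambda_1/(\lambda_1-\lambda_q)}$ produces exactly the constant $C'_k$ in~\eqref{eq:def-C'k}.

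The main technical obstacle will be the coupling between $(T_1,T_2)$ and the angular variables through $A_n$, which blocks a naive Fubini computation: the sandwich $A_n\in[g-\varepsilon,g+\varepsilon]$ combined with the $\rme^{-c\, g}$ majorant is what makes dominated convergence work. Secondarily, the truncation windows---$s_n = \tfrac12 \log d_n^T$ on $(T_1,T_2)$ inside $\mct_n$ and the $O(\log(a_n^T/b_n^T))$ scale of the angular variables---must be chosen compatibly so that the uniformity of~\Cref{lem:convergence-hn} applies on the relevant region. Finally, bookkeeping through the two successive density asymptotics, the sphere density of $\langle\unifk_1,\unifk_2\rangle$ near $-1$, and the Gaussian normalizations requires care to recover the exact prefactor~\eqref{eq:def-C'k}.
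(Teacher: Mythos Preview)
Your proposal is correct and follows essentially the same route as the paper's proof: rewrite the distance event via $h_n$, pass to the rescaled angular variables $(S_n,V_1,V_2)$ using the density of $\langle\unifk_1,\unifk_2\rangle$ near $-1$ together with \Cref{lem:conditioning-uniform-sphere}, apply \Cref{lem:convergence-JJ} with $\omega_n=d_n^T$ for the $(T_1,T_2)$ tail and \Cref{lem:convergence-hn} for the pointwise limit of $h_n$, and then justify the passage to the limit by dominated convergence via \Cref{lem:borne-uniforme-spherique}. The only small slip is that the majorant obtained from~\eqref{eq:borne-hn-integrable} and~\eqref{eq:borne-uniforme-T1+T2} is polynomial, of the form $C(1+g)^{-p}$ for arbitrary $p>0$, rather than exponential $\rme^{-cg}$; this is harmless since the polynomial bound is integrable against $s^{(k-3)/2}\,\rmd s\,\rmd v_1\,\rmd v_2$ for $p$ large enough.
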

As a consequence, we have
\begin{align}
  \label{eq:convergence-ecart}
  \lim_{n\to\infty} & \; n^2 \; \pr \left( \frac{\|\mby_1 - \mby_2\|
      -2\sqrt{\lambda_1}a_n^T}{\sqrt{\lambda_1}b_n^T} + d_{n}^T - \log C'_k > z \; ; \mct_n \right)
  = \rme^{-z} \; .
\end{align}

\begin{proof}
  Define $C_k = \langle \unifk_1,\unifk_2\rangle$.  Let $f$ be a continuous function with
  compact support in $[0,\infty)\times\Rset^{2(d-k)}$.  The first step is to obtain a limit for
  \begin{multline*}
    E(f,z) = n^2  \esp \Big[ f \left(\tfrac{1+C_k}{(c_n^T)^2}, \tfrac{(W_{1,k+1},\dots,W_{1,d})}{c_n^T},
        \tfrac{(W_{2,k+1},\dots,W_{2,d})}{c_n^T}\right) \; ; 
    \|\mby_1-\mby_2\| > 2a_n^T-b_n^Td_n^T+b_n^Tz \; ; \mct_n \Big] \; .
  \end{multline*}
  Since $\unifk_1$ and $\unifk_2$ are independent and uniformly distributed on $\sphere{k-1}$, the
  density of the distribution of $\langle\unifk_{1},\unifk_{2}\rangle$ is $\beta_k^{-1}
  (1-s^2)^{(k-3)/2}$ on $[-1,1]$ with
  \begin{align*}
    \beta_k = \frac{\Gamma((k-1)/2)\Gamma(1/2)}{\Gamma(k/2)} \; .
  \end{align*}
  Let $g$ be the density of $(W_{k+1},\dots,W_d)$ and define
  \begin{align*}
    \tilde{K}_n(y) = n^2(c_n^T)^{2d-k-1} \pr(T_1+T_2>2a_n^T-b_n^Td_{n}^T+b_n^Ty\; ; \mct_n) \; .
  \end{align*}
  By Lemmas~\ref{lem:convergence-hn} and~\ref{lem:convergence-JJ}, $\lim_{n\to\infty} \tilde{K}_n(y)
  = \frac12(2d-k-1)\rme^{-y}$, locally uniformly with respect to $y\in\Rset$.  This yields
  \begin{align*}
    E(f,z) & = (c_n^T)^{-2d+k+1} \frac1{\beta_k}\int_{-1}^1 \int_{\Rset^{2(d-k)}} f \left( \tfrac{1 +
        \sqrt{1-\|u\|^2} \sqrt{1-\|v\|^2}s} {(c_n^T)^2}, \tfrac{u}{c_n^T}, \tfrac{v}{c_n^T} \right)    \\
    & \hspace*{2cm} \times \tilde{K}_n(z+h_n(1+s,u,v)) (1-s^2)^{(k-3)/2} g(u) g(v) \, \rmd s \, \rmd
    u \, \rmd v \\
    & = \frac1{\beta_k}\int_0^\infty \int_{\Rset^{2(d-k)}} f \left( \tfrac{1 +
        \sqrt{1-(c_n^T)^2\|u\|^2} \sqrt{1-(c_n^T)^2\|v\|^2} (-1+(c_n^T)^2t)} {(c_n^T)^2}, u,v \right)    \\
    & \hspace*{2cm} \times \tilde{K}_n(z+h_n((c_n^T)^2t, c_n^Tu, c_n^Tv) \; (2t-(c_n^T)^2t^2)^{(k-3)/2} g(c_n^Tu)
    g(c_n^Tv) \, \rmd t \, \rmd u \, \rmd v    \\
    & \to \frac{2^{(k-3)/2} g^2(0) (2d-k-1)} {2\beta_k} \int_0^\infty \int_{\Rset^{2(d-k)}} f(t,u,v)
    \rme^{-z-g(t,u,v)} \; t^{(k-3)/2} \, \rmd t \, \rmd u \, \rmd v    \\
    & = \frac{2^{(k-3)/2} g^2(0) (2d-k-1)} {2\beta_k} C''_k \rme^{-z}
    f(R_{k-1},U_{1,k+1},\dots,U_{1,d},U_{2,k+1},\dots,U_{2,d}) \; ,
  \end{align*}
  where $R_{k-1}$ has a $\chi^2$ distribution with $k-1$ degrees of freedom and is independent of the jointly Gaussian random
  variables $U_{i,q}$, $i=1,2$, $q=k+1=,\dots,d$ which are as defined in the lemma, and
\begin{align*}
  C_k'' & = 2^{(k-1)/2} \Gamma\left(\tfrac{k-1}2\right) (2\pi)^{d-k} \left(\prod_{q=k+1}^d
    \frac{\lambda_1}{\lambda_1-\lambda_q} \right)^{1/2} \; .
\end{align*}
Provided we extend this convergence to bounded continuous functions, this
yields~(\ref{eq:conv-jointe-cosinus}) with $C'_k$ as in~(\ref{eq:def-C'k}).
By Lemmas~\ref{lem:convergence-hn} and~\ref{lem:borne-uniforme-spherique}, we have, for $p>0$ and
$z\in\Rset$, there exists a constant $C$ such that 
\begin{align*}
  \tilde{K}_n(z + h_n((c_n^T)^2 t, c_n^Tw_1, c_n^Tw_2) ) & \leq C 
  \left( 1 + t + \sum_{q=k+1}^d
    (w_{1,q}^2 + w_{2,q}^2) \right)^{-p} \; 
\end{align*}
is integrable (for $p$ large) with respect to Lebesgue's measure on $[0,\infty)
\times\Rset^{2(d-k)}$. Therefore, arguing as in the proof of Theorem~\ref{theo:norm-ddim} shows that
the convergence holds for all bounded continuous functions $f$. This
proves~(\ref{eq:conv-jointe-cosinus}) by the Portmanteau theorem.
\end{proof}

We are now in a position to prove Lemma~\ref{lem:poisson-convergence}.

\begin{proof}[Proof of~(\ref{eq:poisson-convergence-esp})] 
  Let $C_k$ be as in~(\ref{eq:def-Ck}). Then $\log C_k = \log C'_k - 2\log D_k -
  \log2$. Plug these values and the expression of $a_n$ in terms of $a_n^T$ and
  $b_n^T$ obtained in~(\ref{eq:quantile-relation})
  into~(\ref{eq:convergence-ecart}) and note that $\log(a_n/b_n) =
  \log(a_n^T/b_n^T) + o(1)$.
\end{proof}

\begin{proof}[Proof of~(\ref{eq:poisson-convergence-neglige})]
  For $y_i = t_i (\sqrt{\lambda_1}
  w_{i,1},\dots,\sqrt{\lambda_d}w_{i,d})$, $i=1,2$ and $z\in\Rset$,
  define
  \begin{align*}
    f_n(y_1,y_2) =\1{\{\|y_1-y_2\|>2a_n-b_nd_n+b_nz\}}\1{\{t_1\vee t_2\leq a_n^T + b_n^T s_n\}} \; .
  \end{align*}
  Then
  \begin{align}
    \label{eq:on3-business}
    \esp[f_n(\mby_1,\mby_2) f_n(\mby_1,\mby_3)] = o(n^{-3}) \; .
  \end{align}
  Using the notation of Lemma~\ref{lem:convergence-jointe-cosinus}, we have, for some constant
  $C>0$, 
  \begin{multline*}  
    \esp[f_n(\mby_1,\mby_2) f_n(\mby_1,\mby_3)]
    = \esp \Big[ \Big( \esp[f_n(\mby_1,\mby_2) \mid \mby_1] \Big)^2 \Big] \\
    \leq C  n^{-4} (c_n^T)^{k-d} \int_{\Rset^{d-k}} \Big( \int_0^\infty \Big\{ \int_{\Rset^{d-k}}
    \tilde{K}_n(z + h_n\{(c_n^T)^2 t, c_n^T u, c_n^T    v\} ) \\
    \times t^{(k-3)/2} g(c_n^Tu) \rmd u \Big\} \rmd t \Big)^2 \times g(c_n^Tv) \,
    \rmd v \; .
  \end{multline*}

  By the same arguments as in the proof of Lemma~\ref{lem:convergence-jointe-cosinus}, the integral
  converges to a constant times
\begin{align*}
  \int_{\Rset^{d-k}} \left( \int_0^\infty \int_{\Rset^{d-k}} \rme^{-z-g(t,u,v)} t^{(k-3)/2} \rmd t
    \rmd u_{k+1}\dots\rmd u_d \right)^2 \rmd v_{k+1}\dots\rmd v_d\; .
\end{align*}

This yields that $n^3 \esp[f_n(\mby_1,\mby_2) f_n(\mby_1,\mby_3)]=O((c_n^T)^{k-d}/n) = o(1)$ since
$a_n^T/b_n^T$ is always a slowly varying sequence which implies that $(a_n^T/b_n^T)^p = o(n)$ for
any $p>0$. Indeed, define $\chi(x) = \{\frac{F^\leftarrow}{ \psi\circ F^\leftarrow}\} (1-1/x)$. We
can assume that $\psi$ is differentiable with $\psi'(x)\to0$. Then it suffices to prove that
$\lim_{x\to\infty}x\chi'(x)/\chi(x)=0$. An elementary computation yields, with
$y=F^\leftarrow(1-1/x)$,
\begin{align*}
  \frac{x\chi'(x)}{\chi(x)} = \frac{\psi(y)}{y} -\psi'(y) \to 0 \; ,
\end{align*}
as $x$, hence $y$, tends to infinity. Thus $\chi$ is slowly varying at infinity.
\end{proof}

\paragraph{Acknowledgement} The simulations were made with the \Rlogo\ package diameter written by
Bernard Desgraupes available at \url{http://bdesgraupes.pagesperso-orange.fr/R.html}. Bernard
Desgraupes' help is gratefully acknowledged. We also thank Enkelejd Hashorva for bringing the
reference \cite{hashorva:korshunov:piterbarg:2013} to our attention and for pointing out a mistake
in the first version.

\end{document}